\newcommand{\N}{\mathbb N}
\renewcommand{\P}{\mathcal P}
\newcommand{\A}{\mathcal A}
\newcommand{\B}{\mathfrak B}
\newcommand{\C}{\mathcal C}
\newcommand{\D}{\mathcal D}
\newcommand{\F}{\mathfrak F}
\newcommand{\I}{\mathscr{I}}
\newcommand{\fC}{\mathscr{C}^{\alpha}}
\newcommand{\bfC}{\mathscr{C}^{\beta}}
\newcommand{\fE}{\mathscr{E}}
\newcommand{\fR}{\mathscr{R}^{\alpha}}
\newcommand{\bfR}{\mathscr{R}^{\beta}}
\renewcommand{\L}{\mathcal L}
\newcommand{\R}{\mathbb R}
\renewcommand{\S}{\mathscr S}
\newcommand{\U}{\mathfrak U}
\newcommand{\V}{\mathfrak V}
\newcommand{\X}{\mathscr X}
\newcommand{\bX}{{\bf{X}}}
\newcommand{\ol}{\overline}
\newcommand{\wt}{\widetilde}
\newcommand{\wh}{\widehat}
\renewcommand{\em}{\itshape}
\newcommand{\PROB}{\Pr}
\newcommand{\btau}{\wt{\tau}}
\newcommand{\bkappa}{\wt{\kappa}}
\newcommand{\bC}{\wt{C}}
\newcommand{\btheta}{\wt{\theta}}
\newcommand{\prodm}{\mu_{\otimes}}
\newcommand{\Z}{\xi}
\newcommand{\ZZ}{\psi}
\newcommand{\bZ}{\wt{\Z}}
\newcommand{\bZZ}{\wt{\ZZ}}
\newtheorem*{theorem*}{Theorem}
\DeclareMathAlphabet{\mathpzc}{OT1}{pzc}{m}{it}
\newcommand{\st}{\text{~such that~}}
\newcommand{\as}{\text{almost surely}}
\newcommand{\norm}[1]{\left\lVert#1\right\rVert} %macro for l1 norm
\DeclareMathOperator{\var}{Var}
\newtheorem{theorem}{Theorem}
\newtheorem{proposition}{Proposition}
\newtheorem{lemma}{Lemma}
\newtheorem{corollary}{Corollary}
\renewcommand\thefootnote{\@fnsymbol\c@footnote}%
\begin{document}
\title{Inferring the mixing properties of an ergodic process}
\author{Azadeh Khaleghi\thanks{Department of Mathematics \& Statistics, Lancaster University, Lancaster, United Kingdom}  \and  G\'abor Lugosi\thanks{Department of Economics and Business, Pompeu Fabra University;
ICREA, Pg. Llu\'{\i}s Companys 23, 08010 Barcelona, Spain;
Barcelona Graduate School of Economics.}}
\date{}

\maketitle

\begin{abstract}
We propose strongly consistent estimators of the $\ell_1$ norm of the sequence of $\alpha$-mixing (respectively $\beta$-mixing) coefficients of a stationary ergodic process. We further provide strongly consistent estimators of individual $\alpha$-mixing (respectively $\beta$-mixing) coefficients for a subclass of stationary $\alpha$-mixing (respectively $\beta$-mixing) processes with summable sequences of mixing coefficients. The estimators are in turn used to  develop strongly consistent goodness-of-fit hypothesis tests. 
In particular, we develop hypothesis tests to determine whether, under the same summability assumption,  the $\alpha$-mixing (respectively $\beta$-mixing) coefficients of a  process are upper bounded by a given rate function. Moreover, given a sample  generated by a (not necessarily mixing) stationary ergodic process, we provide a consistent test to discern the null hypothesis that the $\ell_1$ norm of the sequence $\boldsymbol{\alpha}$ of $\alpha$-mixing coefficients of the process is bounded by a given threshold $\gamma \in [0,\infty)$ from  the alternative hypothesis that $\norm{\boldsymbol{\alpha}}> \gamma$.  An analogous goodness-of-fit test is proposed for the $\ell_1$ norm of the sequence of $\beta$-mixing coefficients of a stationary ergodic process.
Moreover, the procedure gives rise to an asymptotically consistent test for independence. 
\end{abstract}
\section{Introduction}

  \emph{Mixing} is a  fundamental notion in the theory of stochastic processes.
  Roughly speaking, a stochastic process ${\bf X}=\langle X_t \rangle_{t \in \N}$ indexed by ``time''
  is mixing if events separated by long time intervals are approximately independent. There are various
  notions to quantify such asymptotic independence, including $\alpha$-mixing, $\beta$-mixing, $\phi$-mixing,
  $\rho$-mixing, $\psi$-mixing; see Bradley \cite{BRA07} for a general survey.
  These notions of mixing are particularly useful in time-series analysis where non-asymptotic concentration inequalities
  are available for empirical averages of stationary processes, see, for example, 
Doukhan \cite{DOUK94},
Rio \cite{RIO99},
Bertail, Doukhan, and Soulier \cite{BER06},
Bradley \cite{BRA07},
Bosq \cite{BOS12}. 
In order to be able to take advantage of these tools, in statistical studies one
  often assumes that the process is not only mixing but the mixing coefficients are such that
  the desired concentration inequalities hold.  Despite the widespread use of such assumptions,
   little attention has been paid to validating these conditions. 
  
  In this paper we study
  the problem of estimating $\alpha$ and $\beta$-mixing coefficients. More precisely, we address the following problem.
  Upon observing a finite sample drawn from the trajectory of a real-valued, discrete-time,   stationary and ergodic 
  stochastic process, how can one consistently estimate its $\alpha$ and $\beta$-mixing coefficients?
  Since the $\ell_1$ norm of the sequence of these coefficients plays an important role in quantifying
  how much sample averages differ from their expectations (see, e.g., Lemma \ref{thm:rio_var}),
  we pay special attention to estimating the $\ell_1$ norm. Our main results show that consistent
  estimation of these norms is indeed possible, under the only assumption that the process is stationary 
  ergodic. (If the process is not mixing, the estimators diverge to infinity.) We also show how these estimates can be used
  to derive consistent hypothesis tests on the mixing coefficients. The main difficulty of the estimation problem stems
  from the fact that all mixing coefficients are inherently asymptotic quantities, yet one only has a finite sample
  available. This makes estimation a nontrivial task, especially when no a-priori properties are assumed apart
  from stationarity and ergodicity of the process. 
  
  Our focus in this paper is on $\alpha$ and $\beta$-mixing coefficients, as these are arguably the most
widely used notions of mixing with numerous statistical applications. One may, of course, ask the analogous
  questions on other measures of mixing. A particularly interesting notion is $\phi$ (or uniform) mixing (see, e.g. Bradley \cite{BRA07} for a definition)
  since under such conditions one has Hoeffding-type exponential inequalities in terms of the $\ell_1$ norm
  of the  sequence of $\phi$-mixing coefficients, see Rio \cite[Corollary 2.1]{RIO99} and Samson \cite{SAM00}.
  However, it is unclear if the $\ell_1$ norm of the sequence of $\phi$-mixing coefficients can be consistently estimated.  In this case, a key challenge lies in conditioning on potentially rare events whose probabilities may be arbitrarily small.
  We leave this interesting challenge for future research.

\subsection{\texorpdfstring{$\alpha$}{Lg} and \texorpdfstring{$\beta$}{Lg} mixing} \label{subsec:alphabeta}
We start by defining the notions of mixing relevant to this paper.
Let $(\Omega,\F,\mu)$ be a probability space, and suppose that $\U$ and $\V$ are two $\sigma$-subalgebras  
of $\F$ respectively.
A classical measure of dependence between $\U$ and $\V$, introduced by Rosenblatt \cite{ROS56},  is given by
\begin{align*}
\alpha(\U,\V)&:=\sup_{U \in \U,V \in \V} |\mu(U \cap V) - \mu(U)\mu(V)| 
\end{align*}
which gives rise to the sequence of dependence coefficients $\boldsymbol{\alpha}:=\langle \alpha(m) \rangle_{m \in \N}$ (called the $\alpha$-mixing coefficients)
of a stochastic process  ${\bf X}=\langle X_t \rangle_{t \in \N}$, where  
\begin{align*}
&\alpha(m):=\sup_{j \in \N}\alpha(\sigma(\{X_{t}: 1\leq t \leq j\}),\sigma(\{X_{t}: t \geq j+m\}))~.
\end{align*}
The $\beta$-dependence $\beta(\U,\V)$ between $\U$ and $\V$ was originally introduced by Volkonskii and Rozanov \cite{VoRo59,VoRo61} as follows (see also Rio \cite{RIO99}). Let $\iota(\omega) \mapsto (\omega,\omega)$ be the injection map from $(\Omega,\F)$ to  $(\Omega \times \Omega,\U \otimes \V)$, where $\U \otimes \V$ is the product sigma algebra generated by $\U \times \V$.
 Let $\prodm$ be the probability measure defined on  $(\Omega \times \Omega,\U \otimes \V)$ obtained as the pushforward measure of  $\mu$ under $\iota$. Let $\mu_{\U}$ and $\mu_{\V}$ denote the restriction of $\mu$ to $\U$ and $\V$ respectively. Then
\begin{align*}
\beta(\U,\V)&:=\sup_{W \in \sigma(\U \times \V)} |\prodm(W) - \mu_{\U}\times \mu_{\V}(W)|~,
\end{align*}
where $\mu_{\U}\times \mu_{\V}$ is the product measure on $(\Omega \times \Omega,\U \otimes \V)$ obtained from $\mu_{\U}$ and $\mu_{\V}$. This measure of dependence leads to the sequence $\boldsymbol{\beta}:=\langle\beta(m)\rangle_{m \in \N}$ of $\beta$-mixing coefficients of a stochastic process ${\bf X}$, where 
\begin{equation*}
\beta(m):=\sup_{j \in \N}\beta(\sigma(\{X_{t}: 1\leq t \leq j\}),\sigma(\{X_{t}: t \geq j+m\})).
\end{equation*}

A stochastic process is said to be $\alpha$-mixing or {\em strongly mixing}, if it exhibits an asymptotic independence of the form $\lim_{m\rightarrow\infty}\alpha(m)=0$. It is said to be $\beta$-mixing or {\em absolutely regular} if $\lim_{m \rightarrow \infty}\beta(m)=0$. It is straightforward to check that $\beta(\U,\V) \geq \alpha(\U,\V)$ so that absolute regularity implies strong mixing. 
Moreover,  summability of $\boldsymbol\alpha$ (respectively $\boldsymbol{\beta}$) is clearly a sufficient but not necessary condition for a process to be $\alpha$-mixing (respectively $\beta$-mixing).

\subsection{The estimation problem}\label{subsec:estimation}  
Regardless of whether or not a process is mixing, its mixing coefficients provide a measure of its dependence structure. Moreover, as mentioned above, individual $\alpha$-mixing (respectively $\beta$-mixing) coefficients and/or their sum $\norm{\boldsymbol{\alpha}}=\sum_{m \in \N}\alpha(m)$ (respectively  $\norm{\boldsymbol{\beta}}=\sum_{m \in \N}\beta(m)$) commonly appear in concentration inequalities for dependent processes.
Thus, in order to use these bounds in a statistical problem where the samples may be dependent, knowledge of the sequences $\boldsymbol{\alpha}$, $\boldsymbol{\beta}$ or at least of their $\ell_1$ norms is required. 

For certain subclasses of dependent processes, bounds on the mixing coefficients are known. For example, conditions for the geometric ergodicity of Markov chains have been well studied, see Meyn and Tweedie \cite{MEY12} and references therein. More recently, some upper bounds on the mixing rates of non-stationary ARCH processes were proposed by Fryzlewicz and Rao \cite{FRY11}. 
However, for larger classes of stationary processes, the mixing
coefficients are typically unknown, and surprisingly little research
has been devoted to the problem of estimating mixing coefficients. One
exception is the work of McDonald, Shalizi, and Schervish
\cite{MCD11,MCD15} who provide estimators for $\beta$-mixing
coefficients and show consistency of their estimators. Unfortunately,
due to a lack of precision in the presentation, we were unable to
verify some of the main claims of these papers.  Our work does not
build upon these results and we consider a more general setting where,
apart from ergodicity, no assumptions are required on the process
distributions (as opposed to some implicit assumptions on the
existence and smoothness of finite-dimensional densities made in
\cite{MCD11,MCD15}).

Towards an adaptive approach, given a sample generated by a stationary ergodic process,  our objective in this paper is to estimate its dependence structure as reflected by its $\alpha$-mixing and $\beta$-mixing coefficients. 
To this end, in Section~\ref{sec:est_a}, we first focus on a subclass of stationary ergodic processes  which are $\alpha$-mixing with the additional property that their sequences of  $\alpha$-mixing coefficients are summable. For this class, without knowledge of any upper-bounds on $\norm{\boldsymbol{\alpha}}$  and merely using the fact that $\norm{\boldsymbol{\alpha}}<\infty$, 
we provide asymptotically consistent estimators of the individual $\alpha$-mixing coefficients as well as for $\norm{\boldsymbol{\alpha}}$. The consistency results for these estimators are established via Theorem~\ref{thm:azal_as1} and  Theorem~\ref{thm:azal_as2} respectively. We rely on Rio's covariance inequality \cite[Corollary 1.1]{RIO99} to control the variance of partial sums. 
Next, we propose an alternative approach for consistently estimating $\norm{\boldsymbol{\alpha}}$ of a (not necessarily mixing) stationary ergodic process where we no longer require $\boldsymbol{\alpha}$ to be summable.
In this case, if the process happens to be $\alpha$-mixing with $\norm{\boldsymbol{\alpha}}<\infty$, then the proposed estimator converges to $\norm{\boldsymbol{\alpha}}$, otherwise the estimator diverges to infinity.
The weak and strong consistency properties of these estimators follow from Theorems~\ref{thm:whp} and \ref{thm:exhaustive_strong}. The results in this section rely on Lemma~\ref{lem:dynk} and Proposition~\ref{prop:phik} which together show that the  approximation given by \eqref{eq:approx} which is based on the cylinder sets converges to $\alpha(m)$.
 
In Section~\ref{sec:est_b} we provide analogous results for the sequence of $\beta$-mixing coefficients of a stationary ergodic process. Most of the arguments are similar to those given in Section~\ref{sec:est_a}, and in particular since $\alpha(m)\leq \beta(m)$, the same covariance inequality is sufficient to control the variance of partial sums in the estimation. However, a key challenge in this case  is that unlike $\alpha(\U,\V)$, the $\beta$-dependence between $\U$ and $\V$  is defined for the product space $(\Omega \times \Omega, \U \otimes \V, \prodm)$. In order to propose an approximation  that can be estimated from a single sample-path, we rely on the identity,
\begin{equation*}
\beta(\U,\V)=\sup \frac{1}{2}\sum_{i \in \N}\sum_{j\in \N}|\mu(U_i\cap V_j)-\mu(U_i)\mu(V_j)|
\end{equation*}
where the supremum is taken over all pairs of countable partitions $\{U_1,U_2,U_3,\ldots\}$ and $\{V_1,V_2,V_3,\ldots\}$ of $\Omega$ such that $U_i \in \U$ and $V_j \in \V$ for each $i,~j\in \N$; see Bradley \cite[Vol. 1, P. 67, Note 2]{BRA07}. This, together with \cite[Vol. 1 Proposition 3.21]{BRA07} gives rise to Lemma~\ref{lem:dynk_b} which in turn leads to Proposition~\ref{prop:phik_b}. These results are analogues of Lemma~\ref{lem:dynk}  and Proposition~\ref{prop:phik}, and show that the approximation \eqref{eq:approx_b} that is based on the cylinder sets converges to $\beta(m)$. 

\subsection{Hypothesis testing}  
In Section~\ref{sec:gf} we show how our estimators can be used to construct goodness-of-fit tests.  First, given a sample generated by a stationary $\alpha$-mixing (respectively $\beta$-mixing) process $\mu$, we use our estimator of $\alpha(m)$ (respectively $\beta(m)$) to test the null hypothesis that the $\alpha$-mixing (respectively $\beta$-mixing) coefficients of $\mu$ are bounded by a given {\em rate function} $\gamma: \N \mapsto [0,1]$ against the alternative hypothesis $H_1$ that there exists some $m \in \N$ such that $\alpha(m)>\gamma(m)$ (respectively $\beta(m)>\gamma(m)$). The consistency of these tests follow from Theorems~\ref{thm:gamma_rate} and \ref{thm:gamma_rate_b}, provided that the sequence of $\alpha$-mixing (respectively $\beta$-mixing) coefficients of the process is summable. 
Interestingly, Nobel \cite{NOB06} used hypothesis testing to estimate polynomial decay rates for covariance-based mixing conditions. By contrast, we do not require the rate functions   to be polynomial or to belong to any specific function class. Moreover, given a sample  generated by a (not necessarily mixing) stationary ergodic process $\mu$, we construct tests to discern the null hypothesis $H_0$ that $\norm{\boldsymbol{\alpha}}$ (respectively $\norm{\boldsymbol{\beta}}$) is bounded by a given threshold $\gamma \in [0,\infty)$ from  the alternative hypothesis $H_1$ that it exceeds $\gamma$. The consistency of these procedures follow from Theorem~\ref{thm:theta_rate} (for $\alpha$-mixing coefficients) and Theorem~\ref{thm:theta_rate_b} (for $\beta$-mixing coefficients). 
As a direct consequence, we obtain strongly consistent tests for independence, obtaining an alternative proof of the main result of Morvai and Weiss \cite{MOR11}; this is stated as Corollary~\ref{cor:iidt2}.

\section{Preliminaries}
\renewcommand{\rho}{\varrho}
In this section we fix our notation and introduce some basic definitions. 
Let $\X$ together with its Borel $\sigma$-algebra $\B_{\X}$ be a measurable space. In order to keep the notation simple and to avoid uninteresting technicalities, we take $\X=[0,1]$; but we would like to point out that extensions to more general spaces including to $\R$ and $\R^d$ are straightforward.
Denote by $\B^{(k)}$ the product $\sigma$-algebra on $\X^k,~k\in \N$.
Let $\X^{\N}$ be the set of all $\X$-valued infinite sequences indexed by $\N$. 
A (discrete-time) stochastic process is a probability measure $\mu$ on the space $(\X^{\N}, \F)$ where $\F$ denotes the 
Borel $\sigma$-algebra on $\X^{\N}$ generated by the cylinder sets.  
Associated with the process is a sequence of random variables $\bX:=\langle X_t \rangle_{t\in \N}$ where $X_t:\X^{\N} \rightarrow \X$ are coordinate projections such that $X_t(\boldsymbol{a})=a_t$ for $\boldsymbol{a}=\langle a_t \rangle_{t \in \N} \in \X^{\N}$.  We use the term {\em (stochastic) process}  to refer to either the measure $\mu$ or to its corresponding sequence of random variables $\bX$; the distinction should be clear from the context.
A process is $\bX$ is stationary if for all $i,k \in \N$ and all $B \in \B^{(k)}$ we have 
\begin{equation*} 
\Pr((X_1,\ldots,X_k) \in B)=\Pr((X_{1+i},\ldots,X_{k+i}) \in B).
\end{equation*} 
Let $S:\X^\N \rightarrow \X^\N$ denote the (left) shift transformation on $\X^{\N}$ which maps $\boldsymbol{a}:=(a_1,a_2,\ldots) \in \X^\N$ to $S\boldsymbol{a}=(a_2,a_3,\ldots)$. It is continuous relative to the product topology on $\X^{\N}$ and defines 
a set transformation $S^{-1}$ given by $S^{-1}A:=\{\boldsymbol{a} \in \X^{\N}:S\boldsymbol{a} \in A\},~ A \subseteq \X^\N$. Thus, it is straightforward to check that $S^{-1}$ is Borel measurable and that  
stationarity of $\bX$ translates to the condition that $\mu(S^{-1} B)=\mu(B)$ for all $B \in \F$. A stationary process is ergodic if every shift-invariant measurable set has measure $0$ or $1$ so that if $S^{-1}B=B$ for some $B \in \F$ then $\mu(B) \in \{0,1\}$. Recalling the definition of a strongly mixing process given in the introduction, note the well-known fact that a stationary  $\alpha$-mixing process is ergodic, see, for example, Bradley \cite{BRA07}.

\section{Estimation}\label{sec:est}
Consider a stationary ergodic  process $\mu$ with corresponding sequence of random variables $\bX=\langle X_t\rangle_{t \in \N}$ and sequences of $\alpha$-mixing and $\beta$-mixing coefficients  $\boldsymbol{\alpha}:=\langle \alpha(m) \rangle_{m \in \N}$ and $\boldsymbol{\beta}:=\langle \beta(m) \rangle_{m \in \N}$ respectively. In this section we introduce estimators of $\norm{\boldsymbol{\alpha}}$ and $\alpha(m)$, as well as of  $\norm{\boldsymbol{\beta}}$ and $\beta(m)$ -
  and establish their consistency -- these are the main results of the paper. 
  
We consider the,
  somewhat simpler,  case of $\alpha$-mixing coefficients in Section \ref{sec:est_a}.  The corresponding estimators for $\beta$-mixing coefficients are given Section \ref{sec:est_b}. 
 For the most part, the estimators -- and proofs -- concerning the $\beta$-mixing coefficients are analogous to their $\alpha$-mixing counterparts. However, as discussed in Sections~\ref{subsec:alphabeta} and \ref{subsec:estimation}, there is a subtle distinction between the measurable spaces on which the two mixing coefficients are defined. This  calls for a slightly different treatment in the case of $\beta$-mixing coefficients, giving rise to a number of technical challenges which we address as part of our analysis in  Section \ref{sec:est_b}.

The following notation will be used throughout the section.
Let $\Delta_{k,\ell}$ 
be the set of dyadic cubes in $\X^k,~k\in \N$ of side-length $2^{-\ell}$. 
That is, 
\[
  \Delta_{k,\ell}:=\left\{ \left[\frac{i_1}{2^{\ell}}, \frac{i_1+1}{2^{\ell}}\right)\times \ldots \times \left[\frac{i_k}{2^{\ell}}, \frac{i_k+1}{2^{\ell}}\right): i_j \in \{0,\ldots,2^{\ell}-1\},~j \in \{1,\ldots,k\}\right\}
\]
For each $k,\ell \in \N$ we denote by $
\D_{k,\ell}:=\P(\Delta_{k,\ell})
$  the power-set of $\Delta_{k,\ell}$. 
For a given $B \in \B^{(k)},~k \in \N$, we denote the event $\{X_i,\ldots,X_{i+k-1} \in B\},~i \in \N$ by $[B]_{i}^{i+k}$. For $m, \ell \in \N,~n >m \in \N$ and  each $j \in \{1,\ldots,n-m\}$  define the $\sigma$ algebra generated by the sets $[A]_{1}^{j}$ for
$A \in \D_{j,\ell}$ by
\begin{equation}\label{eq:defn_Fj}
\F_{1}^{j}(\ell):=\sigma\left (\left \{[A]_{1}^{j}: A \in \D_{j,\ell}\right \}\right)
\end{equation}
and similarly let 
\begin{equation}\label{eq:defn_Fj+m}
\F_{j+m}^{n}(\ell):=\sigma \left (\left \{ [B]_{j+m+1}^{n}: B\in \D_{j',\ell},~\text{with}~j':=n-m-j\right\}\right)~.
\end{equation}

\subsection{Estimating \texorpdfstring{$\alpha(m)$}{Lg} and \texorpdfstring{$\norm{\boldsymbol{\alpha}}$}{Lg}}\label{sec:est_a}
Since the mixing coefficients $\alpha(m)$ of ${\bf X}$ are inherently asymptotic quantities, we first approximate
  them by quantities that only depend on finite-dimensional projections of the process and therefore they can be estimated
  from finite samples of the process. To this end, for each $m,\ell \in \N,~n > m \in \N$ and $j \in \{1,\ldots,n-m\}$, define
\begin{equation}\label{eq:approx}
\alpha_{n,j}^{\ell}(m):= \sup_{\substack{A \in \D_{j,\ell} \\ B \in \D_{j',\ell}}} \left |\mu \left ([A]_{1}^{j} \cap [B]_{j+m+1}^{n} \right ) -\mu \left( [A]_{1}^{j} \right ) \mu \left( [B]_{j+m+1}^{n}\right )  \right |
\end{equation}
where $j':=n-m-j+1$. 
In Lemma~\ref{lem:dynk} below we show that to approximate the $\alpha$-dependence between $\F_{1}^{j}(\ell)$ and $\F_{j+m}^{n}(\ell)$, it suffices to use $\alpha_{n,j}^{\ell}(m)$ where the supremum is taken over the smaller classes of sets $\D_{j,\ell}$ and $\D_{j',\ell},~\text{with}~j':=n-m-j$.  
\begin{lemma}\label{lem:dynk}
For $m, \ell \in \N,~n >m \in \N$  and $j \in \{1,\ldots,n-m\}$ we have 
\[\alpha_{n,j}^{\ell}(m)=\alpha( \F_{1}^{j}(\ell), \F_{j+m}^{n}(\ell)).\] 
\end{lemma}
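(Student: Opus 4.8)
The plan is to show both inequalities between $\alpha_{n,j}^{\ell}(m)$ and $\alpha(\F_1^j(\ell),\F_{j+m}^n(\ell))$. The inequality $\alpha_{n,j}^{\ell}(m)\le \alpha(\F_1^j(\ell),\F_{j+m}^n(\ell))$ is immediate: each set $[A]_1^j$ with $A\in\D_{j,\ell}$ belongs to $\F_1^j(\ell)$ by its very definition \eqref{eq:defn_Fj}, and likewise each $[B]_{j+m+1}^n$ with $B\in\D_{j',\ell}$ (for the relevant $j'$) belongs to $\F_{j+m}^n(\ell)$ by \eqref{eq:defn_Fj+m}; hence the supremum defining $\alpha_{n,j}^{\ell}(m)$ is taken over a subcollection of the pairs appearing in the supremum defining $\alpha(\F_1^j(\ell),\F_{j+m}^n(\ell))$, and one must only check that the index ranges for $j'$ match up (the $+1$ bookkeeping in $j'=n-m-j+1$ versus $j'=n-m-j$ reflects that $[B]_{j+m+1}^n$ constrains coordinates $j+m+1,\dots,n$, which is $n-(j+m+1)+1=n-m-j$ coordinates, so the two statements are consistent once the off-by-one is tracked carefully).

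For the reverse inequality the key observation is that the generating collections are finite partitions, not merely generating families. Concretely, $\{[A]_1^j : A\in\Delta_{j,\ell}\}$ is a finite partition of $\X^{\N}$ (the dyadic cubes of side $2^{-\ell}$ partition $\X^j=[0,1]^j$, and pulling back under the coordinate projection preserves this), so every set in $\F_1^j(\ell)$ is a finite disjoint union of atoms $[A]_1^j$, $A\in\Delta_{j,\ell}$; similarly every set in $\F_{j+m}^n(\ell)$ is a finite disjoint union of atoms $[B]_{j+m+1}^n$. I would then invoke the standard fact that for two $\sigma$-algebras generated by finite partitions, the $\alpha$-dependence (in fact, the supremum of $|\mu(U\cap V)-\mu(U)\mu(V)|$) is attained on the atoms: writing $U=\bigcup_{i\in I}[A_i]_1^j$ and $V=\bigcup_{j\in J}[B_j]_{j+m+1}^n$ as disjoint unions, one has
\[
\mu(U\cap V)-\mu(U)\mu(V)=\sum_{i\in I}\sum_{j\in J}\bigl(\mu([A_i]_1^j\cap[B_j]_{j+m+1}^n)-\mu([A_i]_1^j)\mu([B_j]_{j+m+1}^n)\bigr),
\]
so $|\mu(U\cap V)-\mu(U)\mu(V)|$ is bounded by a sum of the quantities controlled by $\alpha_{n,j}^{\ell}(m)$; one then upgrades this crude triangle-inequality bound to the sharp one by the usual argument of grouping the index pairs according to the sign of the summand and observing that both the "positive part" and the "negative part" of the double sum correspond to differences $\mu(U'\cap V')-\mu(U')\mu(V')$ for suitable unions of atoms, each of which is at most $\alpha_{n,j}^{\ell}(m)$ in absolute value — this is exactly the content of the partition characterization of $\alpha$ (cf.\ the analogous partition identity for $\beta$ quoted in Section~\ref{subsec:estimation}).

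The main obstacle, and the only place requiring real care, is this last step: verifying that the supremum in the definition of $\alpha(\U,\V)$ over arbitrary $U\in\U$, $V\in\V$ is achieved (or at least not exceeded) when $U$ and $V$ range only over atoms of the generating partitions. The naive triangle inequality gives a bound with a spurious multiplicative factor (the number of atoms), which is useless; the correct argument is that for fixed $V$, the map $U\mapsto \mu(U\cap V)-\mu(U)\mu(V)$ is additive over disjoint unions, hence maximized over unions of atoms by taking $U$ to be the union of exactly those atoms $[A_i]_1^j$ on which $\mu([A_i]_1^j\cap V)-\mu([A_i]_1^j)\mu(V)>0$, and this extremal $U$ is itself a single set in $\F_1^j(\ell)$; a symmetric argument in $V$ then reduces to the atomic case. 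I would state this as a short self-contained lemma on finite partitions (or cite Bradley \cite{BRA07}, Vol.~1, for the partition formulation of $\alpha$) rather than rederive it inline. Everything else — measurability of the cylinder sets, the fact that $\Delta_{j,\ell}$ tiles $[0,1]^j$, the index bookkeeping — is routine.
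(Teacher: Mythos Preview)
Your approach is correct and genuinely different from the paper's. The paper proves the nontrivial inequality by a Dynkin $\pi$--$\lambda$ argument: it shows that the collection of $U$ satisfying $\sup_V |\mu(U\cap V)-\mu(U)\mu(V)|\le \alpha_{n,j}^\ell(m)$ is a $\lambda$-system containing the generating algebra, hence contains $\F_1^j(\ell)$, and then repeats in the second coordinate. You instead exploit that $\F_1^j(\ell)$ and $\F_{j+m}^n(\ell)$ are generated by \emph{finite} partitions, so every element is a finite union of atoms. Your route is more elementary and better suited to this finitary setting; the paper's $\pi$--$\lambda$ machinery is overkill here (though it would be the right tool if the generating collections were infinite).

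That said, you are working harder than necessary, and your ``main obstacle'' is illusory. Recall that $\D_{j,\ell}=\P(\Delta_{j,\ell})$ is the \emph{power set} of the dyadic partition, so as $A$ ranges over $\D_{j,\ell}$ the sets $[A]_1^j$ range over \emph{all} finite unions of atoms --- that is, over all of $\F_1^j(\ell)$. Likewise $\{[B]_{j+m+1}^n:B\in\D_{j',\ell}\}=\F_{j+m}^n(\ell)$. Hence the supremum in \eqref{eq:approx} and the supremum defining $\alpha(\F_1^j(\ell),\F_{j+m}^n(\ell))$ are taken over \emph{identical} collections of pairs, and the lemma is immediate: no reduction to atoms, no sign-grouping, no sequential optimization is needed. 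Your sign-grouping description also contains a slip --- partitioning the double sum $\sum_{i,j}c_{ij}$ by the sign of $c_{ij}$ does not yield a rectangle $I'\times J'$, so the positive part is not of the form $\mu(U'\cap V')-\mu(U')\mu(V')$ --- though your alternative sequential-optimization argument (fix $V$, optimize $U$; then optimize $V$) is valid and would suffice if it were actually required.
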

\begin{proof}[Proof of Lemma~\ref{lem:dynk}]
Fix $m, \ell \in \N,~n >m \in \N$ and some $j \in \{1,\ldots,n-m\}$. First, it is clear that 
$\alpha_{n,j}^{\ell}(m) \leq \alpha( \F_{1}^{j}(\ell), \F_{j+m}^{n}(\ell))$. 
Therefore, it suffices to show that $\alpha_{n,j}^{\ell}(m) \geq \alpha( \F_{1}^{j}(\ell), \F_{j+m}^{n}(\ell))$. 
Define
\begin{align*}
&\C_{1}^{j}(\ell):=\left \{[A]_{1}^{j}: A \in \D_{j,\ell} \right \}
\end{align*}
and 
\begin{align*}
&\C_{j+m}^{n}(\ell):=\left \{ [B]_{j+m+1}^{n}: B\in \D_{j',\ell},~j':=n-m-j\right\}.
\end{align*}
Let $\A_{1}^{j}(\ell)$ and $\A_{j+m}^{n}(\ell)$ correspond to algebras over $\C_{1}^{j}(\ell)$ and $\C_{j+m}^{n}(\ell)$
 respectively. 
It trivially holds that $\A_{1}^{j}(\ell)$ forms a $\pi$-system, since by definition, it is non-empty and closed under finite intersections.
On the other hand we have
\begin{equation}\label{eq:alpha_alg-}
\sup_{\substack{U \in \A_{1}^{j}(\ell) \\ V \in \C_{j+m}^{n}(\ell) }}\left | \mu(U \cap V)-\mu(U)\mu(V) \right|\leq \alpha_{n,j}^{\ell}(m)~.
\end{equation}
To see this, first note that
by definition of $\alpha_{n,j}^{\ell}(m)$, we have
\begin{equation}\label{eq:alpha_bound_assmp}
\sup_{\substack{U \in \C_{1}^{j}(\ell) \\ V \in \C_{j+m}^{n}(\ell) }}\left | \mu(U \cap V)-\mu(U)\mu(V) \right|= \alpha_{n,j}^{\ell}(m)~.
\end{equation}
Moreover, by construction $\C_{1}^{j}(\ell)$ is already closed under finite unions and intersections. 
Therefore, by \eqref{eq:alpha_bound_assmp} for any $k \in 1,\ldots, |\C_{1}^{j}(\ell)|$ and any $U_1,\ldots,U_k \in \C_{1}^{j}(\ell)$ we have
\begin{align*}
\max\left\{\sup_{V \in \C_{j+m}^{n}(\ell)}\left | \mu(U \cap V)-\mu(U)\mu(V) \right|, \sup_{V \in \C_{j+m}^{n}(\ell)}\left | \mu(U' \cap V)-\mu(U')\mu(V) \right|\right\}\leq \alpha_{n,j}^{\ell}(m)
\end{align*}
where $U :=\bigcup_{i=1}^k U_i$ and $U':=\bigcap_{i=1}^k U_i$. Moreover, for any $U  \in \C_{1}^{j}(\ell)$ and any $V \in  \C_{j+m}^{n}(\ell)$, we have 
\begin{align*}
\left | \mu(U^c \cap V)-\mu(U^c)\mu(V) \right| 
&=\left | \mu(V)-\mu(U\cap V )-(1-\mu(U))\mu(V) \right| \\
&=\left |\mu(U)\mu(V)-\mu(U\cap V )  \right| \leq \alpha_{n,j}^{\ell}(m)~.
\end{align*} 
Let $\L$ be the largest algebra of subsets of $\F$ such that
\begin{align}
\sup_{\substack{U \in \L\\ V \in\C_{j+m}^{n}(\ell)}}\left | \mu(U \cap V)-\mu(U)\mu(V) \right|\leq \alpha_{n,j}^{\ell}(m)~.
\end{align}
It is straightforward to verify that $\L$ forms a $\lambda$-system. 
To see this, first note that since $\alpha_{n,j}^{\ell}(m) \geq 0$,  it clearly holds that $\X^\N \in \L$. 
Next, take $U_1 \subseteq U_2 \in \L$. 
Since $\mathcal L$ is an algebra, it is closed under complementation as well as under pairwise unions and intersections. 
Therefore, $(U_2\setminus U_1) \in \mathcal L$. Finally, consider a countable sequence of increasing subsets $U_{i} \subseteq U_{i+1}  \in \L,~i \in \N$ and define $U :=\bigcup_{i=1}^{\infty} U_i$. It follows from the continuity of probability measure that  $U \in \L$. 
More specifically, for $V \in \C_{j+m}^{n}(\ell)$ define $\ol{U}_i:=U_i \cap V,~i \in \N$, and let $\ol{U}:=\bigcup_{i=1}^{\infty}\ol{U}_i$.  Observe that $\ol{U}_i \subseteq \ol{U}_{i+1}$ so that $\lim_{n\to\infty}\mu(\ol{U}_n)=\mu(\ol{U})$. Similarly, it holds that $\lim_{n\to \infty} \mu(U_n)=\mu(U)$. 
Fix $\epsilon >0$ there exist $N_{\epsilon}, N'_{\epsilon}$ such that for all $n \geq \max\{N_{\epsilon}, N'_{\epsilon}\}$ we have 
$|\mu(U_n)-\mu(U)|\leq \epsilon$ and $|\mu(\ol{U}_n)-\mu(\ol{U})|\leq \epsilon$. 
Therefore, 
\begin{align*}
\Big | \mu(U \cap V)-\mu(U)\mu(V) \Big|
&=
\left |\mu(\ol{U})-\mu(U)\mu(V)\right |\\
&\leq \left |\mu(\ol{U}_n)-\mu(U_n)\mu(V)\right |+2\epsilon \\
&\leq \alpha_{n,j}^{\ell}(m)+2\epsilon~.
\end{align*}
Since the choice of epsilon is arbitrary, it follows that $U \in \L$. 
Therefore $\L$ is a $\lambda$-system. Moreover, by \eqref{eq:alpha_alg-} we have $\A_{1}^{j}(\ell) \subseteq \L$. 
Thus, as follows from Dynkin's $\pi-\lambda$ theorem, we can deduce that $\sigma(\A_{1}^{j}(\ell)) \subseteq \L$ so that
\begin{equation}\label{eq:dynk_L}
\sup_{\substack{U \in \sigma(\A_{1}^{j}(\ell)) \\ V \in\C_{j+m}^{n}(\ell)}}\left | \mu(U \cap V)-\mu(U)\mu(V) \right|\leq \alpha_{n,j}^{\ell}(m)~.
\end{equation} 
On the other hand, let $\L'$ be the largest algebra of subsets of $\F$ such that 
\begin{equation}
\sup_{\substack{U \in \sigma(\A_{1}^{j}(\ell)) \\ V \in\L'}}\left | \mu(U \cap V)-\mu(U)\mu(V) \right|\leq \alpha_{n,j}^{\ell}(m)~.
\end{equation}
In much the same way as with $\L$, it is easy to see that $\L'$ is a $\lambda$-system. 
Moreover, by \eqref{eq:dynk_L} and an argument analogous to that concerning $\C_{1}^{j}(\ell)$, we 
can conclude that the algebra of subsets of $\C_{j+m}^{n}(\ell)$ is included in $\L'$, that is, $\A_{j+m}^{n}(\ell) \subseteq \L'$.
Hence, by another application of the $\pi-\lambda$ theorem,
we obtain $\sigma(\A_{j+m}^{n}(\ell))\subseteq \L'$. This leads to 
\begin{equation}
\sup_{\substack{U \in \sigma(\A_{1}^{j}(\ell))\\ V \in \sigma(\A_{j+m}^{n}(\ell))}}\left | \mu(U \cap V)-\mu(U)\mu(V) \right|\leq \alpha_{n,j}^{\ell}(m)~.
\end{equation}
Observing that $\sigma(\A_{1}^{j}(\ell))=\F_{1}^j(\ell)$ and $\sigma(\A_{j+m}^{j}(\ell))=\F_{j+m}^n(\ell)$, we obtain
\begin{equation}
\alpha(\F_{1}^j(\ell),\F_{j+m}^n(\ell)) \leq \alpha_{n,j}^{\ell}(m)
\end{equation}
and the result follows.
\end{proof}
For $m, \ell \in \N,~n >m \in \N$ let 
\begin{equation}
\alpha_n^{\ell}(m)=\max_{j \in \{1,\ldots,n-m\}}\alpha_{n,j}^{\ell}(m)~.
\end{equation}
The next proposition shows that indeed, $\alpha_n^{\ell}(m)$ approximates 
$\alpha(m)$ for sufficiently large values of $\ell$ and $n$.
\begin{proposition}\label{prop:phik}
For every $m\in \N$ we have
$
\displaystyle \lim_{ n,\ell \rightarrow \infty}\alpha^{\ell}_n(m)=\alpha(m)~.
$
\end{proposition}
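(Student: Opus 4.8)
The plan is to show the two inequalities $\limsup_{n,\ell\to\infty}\alpha_n^\ell(m)\le\alpha(m)$ and $\liminf_{n,\ell\to\infty}\alpha_n^\ell(m)\ge\alpha(m)$ separately, using Lemma~\ref{lem:dynk} to reinterpret $\alpha_n^\ell(m)$ as $\max_j\alpha(\F_1^j(\ell),\F_{j+m}^n(\ell))$ and then controlling how these finite $\sigma$-algebras exhaust the ones appearing in the definition of $\alpha(m)$.

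For the upper bound, I would fix $j$, $\ell$, $n$ and observe that $\F_1^j(\ell)\subseteq\sigma(X_1,\dots,X_j)$ and $\F_{j+m}^n(\ell)\subseteq\sigma(X_{j+m+1},\dots,X_n)\subseteq\sigma(X_t:t\ge j+m+1)$. Wait — there is an index-shift subtlety: the $\sigma$-algebras in the definition of $\alpha(m)$ are $\sigma(X_t:1\le t\le j)$ and $\sigma(X_t:t\ge j+m)$, and here the ``future'' block starts at $j+m+1$, which only makes the future $\sigma$-algebra smaller, so monotonicity of $\alpha(\cdot,\cdot)$ in each argument gives $\alpha(\F_1^j(\ell),\F_{j+m}^n(\ell))\le\alpha(\sigma(X_t:1\le t\le j),\sigma(X_t:t\ge j+m))\le\alpha(m)$. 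Taking the max over $j$ and then the limit yields $\limsup_{n,\ell}\alpha_n^\ell(m)\le\alpha(m)$.

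For the lower bound, fix $\varepsilon>0$ and pick $j$ and sets $U\in\sigma(X_1,\dots,X_j)$, $V\in\sigma(X_t:t\ge j+m)$ with $|\mu(U\cap V)-\mu(U)\mu(V)|>\alpha(m)-\varepsilon$. First approximate $V$: since $V\in\sigma(X_t:t\ge j+m)=\bigvee_{n}\sigma(X_{j+m},\dots,X_n)$ (up to the index shift, absorbed by taking $n$ large), there is $n$ and $V'\in\sigma(X_{j+m+1},\dots,X_n)$ with $\mu(V\triangle V')$ small; here one also needs that the single coordinate $X_{j+m}$ can be dropped up to small measure, or alternatively note the definition allows the future block to start at $j+m$, and one adjusts $j$ by $1$ which does not affect the supremum over $j$. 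Then approximate $U$ and $V'$ by finite dyadic data: for $\ell$ large, each coordinate $X_t$ is determined up to a dyadic cell of side $2^{-\ell}$, and by the martingale convergence theorem (or directly, since $\bigvee_\ell \F_1^j(\ell)=\sigma(X_1,\dots,X_j)$ and similarly for the future block) there exist $\ell$ large and $\widetilde U\in\F_1^j(\ell)$, $\widetilde V\in\F_{j+m}^n(\ell)$ with $\mu(U\triangle\widetilde U)$ and $\mu(V'\triangle\widetilde V)$ arbitrarily small. Combining the approximations via the elementary bound $|\mu(A\cap B)-\mu(A')\cap \mu(B')|$-type estimates shows $\alpha_{n,j}^\ell(m)>\alpha(m)-2\varepsilon$ for these $n,\ell$, hence $\liminf_{n,\ell}\alpha_n^\ell(m)\ge\alpha(m)$.

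The main obstacle is the lower bound, and within it the bookkeeping of the index shift (the block in \eqref{eq:approx} starts at $j+m+1$ whereas $\alpha(m)$ uses $t\ge j+m$) together with the fact that the supremum defining $\alpha(m)$ is over \emph{all} $j\in\N$ and over measurable sets in the full tail $\sigma$-algebra $\sigma(X_t:t\ge j+m)$, not a finite-dimensional one — so one genuinely needs a two-stage approximation (first truncate the tail to a finite window, then discretize each coordinate to dyadic precision), each stage justified by a continuity/density argument, and one must check that the errors accumulate additively and can be made uniformly small. The monotonicity direction (upper bound) is essentially immediate from Lemma~\ref{lem:dynk} and the nesting of $\sigma$-algebras; I do not expect it to cause difficulty.
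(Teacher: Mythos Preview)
Your argument is correct and reaches the same conclusion, but by a different route from the paper. The paper does not split into $\limsup/\liminf$ or approximate individual sets by hand; instead it invokes a continuity result for $\alpha(\cdot,\cdot)$ along increasing $\sigma$-fields (Bradley \cite[Vol.~1, Proposition~3.18]{BRA07}) to obtain, for each fixed $j$, $\lim_{n,\ell}\alpha(\F_1^j(\ell),\F_{j+m}^n(\ell))=\alpha(\sigma(X_1,\dots,X_j),\sigma(X_t:t\ge j+m+1))$, and then exploits the monotonicity of $\alpha_{n,j}^\ell(m)$ in $n$ and $\ell$ to replace limits by suprema and interchange the $\sup$ over $j$ with the $\sup$ over $(n,\ell)$. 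Your two-stage approximation (truncate the tail to a finite window, then discretize each coordinate dyadically) is precisely the mechanism underlying Bradley's proposition, so you are in effect reproving that ingredient rather than quoting it; this buys you a self-contained, elementary proof at the cost of more bookkeeping, while the paper's citation keeps the argument short. The index-shift you flag is genuine and in fact also surfaces in the paper's own proof (its final equality silently identifies a supremum with future starting at $j+m+1$ with $\alpha(m)$); it is a notational off-by-one that does not affect the substance once the conventions in \eqref{eq:approx} and \eqref{eq:defn_Fj+m} are aligned with the definition of $\alpha(m)$, and your stationarity-plus-enlarge-the-past remedy is the right idea for closing it.
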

\begin{proof}[Proof of Proposition~\ref{prop:phik}]
Fix  $m,\ell \in \N$ and $n >m$.
Observe that for each $j \in \{1,\ldots,n-m\}$ we have $\F_{1}^{j}(\ell) \subset \F_{1}^{j}(\ell+1),~\ell \in \N$ and $\F_{j+m}^n(\ell)\subset \F_{j+m}^{n+1}(\ell+1),~n,\ell \in \N$ are each a sequence of $\sigma$-algebras with $\bigvee_{\ell}^{\infty}\F_{1}^{j}(\ell) =\sigma(X_{1},\ldots,X_j)$ and 
$\bigvee_{n,\ell}^{\infty}\F_{j+m}^{n}(\ell)=\sigma(\{X_{t}: t \geq j+m+1\})$. 
Therefore, by Bradley \cite[Vol. 1, Proposition 3.18]{BRA07} we obtain, 
\begin{equation}\label{eq:prop318} 
\lim_{n,\ell \to \infty}\alpha(\F_{1}^{j}(\ell),\F_{j+m}^n(\ell))=\alpha(\sigma(\{X_t: t \in \{1,\ldots, j\}\}),\sigma(\{X_{t}: t \geq j+m+1\}))~.
\end{equation}
It is straightforward to  check that for each $m \in \N$ we have
\begin{align}\label{eq:supmax}
\sup_{n,\ell}\max_{j \in \{1,\ldots,n-m\}}\alpha_{n,j}^{\ell}(m)=\sup_{j \in \N}\sup_{n\geq j+m+1} \sup_{\ell}\alpha_{n,j}^{\ell}(m)~.
\end{align}
To see this 
first let $c:= \sup_{j \in \N}\sup_{n\geq j+m} \sup_{\ell}\alpha_{n,j}^{\ell}(m)$ and 
fix some $\epsilon>0$; by the definition of $\sup$ there exist $\ell^*, j^* \in \N$ and  $n^* \geq j^*+m$ such that  $\alpha_{n^*,j^*}^{\ell}(m) \geq c-\epsilon$. We have $c-\epsilon \leq   \alpha_{n^*,j^*}^{\ell}(m) \leq \sup_{n,\ell}\max_{j \in \{1,\ldots,n-m\}} \alpha_{n,j}^{\ell}(m)$. Similarly, let $c':= \sup_{n,\ell}\max_{j \in \{1,\ldots,n-m\}}\alpha_{n,j}^{\ell}(m)$, and note that there exist some $n',\ell'  \in \N$ such that $\max_{j \in 1,\ldots,n'-m}\alpha_{n',j}^{\ell'}(m) \geq c'-\epsilon$. Hence, 
\[c'-\epsilon \leq \max_{j \in 1,\ldots,n'-m}\alpha_{n',j}^{\ell'}(m)  \leq \sup_{j \in \N}\sup_{n\geq j+m} \sup_{\ell}\alpha_{n,j}^{\ell}(m).\] Since the choice of $\epsilon$ is arbitrary, \eqref{eq:supmax} follows. 
We obtain
\begin{align}
\lim_{n,\ell \rightarrow\infty} \alpha_{n}^{\ell}(m) 
&= \lim_{n,\ell \rightarrow \infty}\max_{j \in \{1,\ldots,n-m\}}\alpha_{n,j}^{\ell}(m)\nonumber \\
&=\sup_{n,\ell}\max_{j \in \{1,\ldots,n-m\}}\alpha_{n,j}^{\ell}(m)\label{eq:limissup1}\\
&=\sup_{j \in \N}\sup_{ n \geq j+m}\sup_{\ell}\alpha_{n,j}^{\ell}(m)\label{eq:supmax_}\\
&=\sup_{j \in \N}\lim_{n,\ell \rightarrow \infty}\alpha_{n,j}^{\ell}(m) \label{eq:limissup2}\\
&=\sup_{j \in \N}\lim_{n,\ell \rightarrow \infty}\alpha(\F_{1}^{j}(\ell),\F_{j+m}^n(\ell)) \label{eq:approxissigmaalg}\\
&=\sup_{j \in \N}\alpha(\sigma(\{X_t:t \in \{1,\ldots, j\}\}),\sigma(\{X_t:t \geq j+m+1\}))\label{eq:fromprop318}\\
&=\alpha(m)~,\nonumber
\end{align}
where \eqref{eq:limissup1} and   \eqref{eq:limissup2} follow from the fact that  for a fixed $m \in \N$, $\alpha_{n,j}^{\ell}(m)$ is an increasing function of $n,\ell$, \eqref{eq:supmax_} follows from \eqref{eq:supmax}, \eqref{eq:approxissigmaalg} follows from Lemma~\ref{lem:dynk}, and \eqref{eq:fromprop318} follows from \eqref{eq:prop318}.
\end{proof}

Now we are ready to introduce the natural empirical estimates of the approximate mixing coefficients $\alpha^{\ell}_n(m)$. The key ingredient of the analysis is the concentration inequality of Lemma \ref{thm:rio_var}
below.

For $t \in \N$  
define the empirical measure $\mu_t({\bf X},\cdot):  \B^{(k)} \rightarrow [0,1],~k \in \N$ as
\begin{equation}\label{eq:emp}
\mu_t({\bf X},B):=\frac{1}{t}\sum_{i=0}^{t-1} \chi_{B}\{X_{ik+1},\ldots, X_{(i+1)k}\}~,
\end{equation}
where $\chi$ is the indicator function. Lemma~\ref{lem:empf} provides a simple concentration bound on the empirical measure of a cylinder set $[D]_1^k$ for any $D \in \D_{k,\ell},~k,\ell \in \N$. The proof relies on the following variance bound of Rio:
\begin{lemma}[{Rio \cite[Corollary 1.1]{RIO99}}]\label{thm:rio_var}
Let $\langle Y_i \rangle_{i \geq 0}$ be a stationary sequence of $[-1,1]$-valued random variable. Define $\ol{\alpha}(m):=\alpha(\sigma(Y_0),\sigma(Y_m))$ and suppose that $\norm{\ol{\boldsymbol{\alpha}}}:=\sum_{m \in \N} \ol{ \alpha}(m) <\infty$. For each $t \in \N$, let $S_t:=Y_0+\ldots+Y_{t-1}$. We have
\begin{equation*}
\var(S_t) \leq 4t\norm{\ol{\boldsymbol{\alpha}}}.
\end{equation*}
\end{lemma}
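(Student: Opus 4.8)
The plan is to reduce the variance bound for the partial sum $S_t = Y_0 + \cdots + Y_{t-1}$ to a controlled sum of covariances and then apply a covariance inequality in terms of the $\alpha$-mixing coefficients. First I would write $\var(S_t) = \sum_{i=0}^{t-1}\sum_{j=0}^{t-1}\operatorname{Cov}(Y_i,Y_j)$, and use stationarity to collapse this into $\var(S_t) = t\,\var(Y_0) + 2\sum_{k=1}^{t-1}(t-k)\operatorname{Cov}(Y_0,Y_k)$. The key input is Rio's covariance inequality (the version in \cite[Corollary 1.1]{RIO99}, or the more elementary bound sufficing here for bounded variables): for $[-1,1]$-valued random variables that are, respectively, $\U$- and $\V$-measurable, one has $|\operatorname{Cov}(Y,Z)| \le 2\alpha(\U,\V)$ — and more generally $|\operatorname{Cov}(Y_0,Y_k)| \le C\,\ol\alpha(k)$ for an absolute constant. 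Since $Y_0 \in [-1,1]$, also $\var(Y_0) \le 1 \le$ (a constant times) $\ol\alpha(0)$ if one adopts the convention $\ol\alpha(0) = 1/2$, or one simply keeps the $\var(Y_0)$ term separate and bounds it by $1$.

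The main steps, in order: (1) expand $\var(S_t)$ via stationarity as above; (2) bound each covariance term by $|\operatorname{Cov}(Y_0,Y_k)| \le 2\ol\alpha(k)$ using the covariance inequality for bounded random variables; (3) bound the diagonal term $t\var(Y_0) \le t$; (4) estimate $2\sum_{k=1}^{t-1}(t-k)\cdot 2\ol\alpha(k) \le 4t\sum_{k=1}^{t-1}\ol\alpha(k) \le 4t\norm{\ol{\boldsymbol\alpha}}$, discarding the $-k$ and extending the sum to all of $\N$; (5) combine, being slightly careful with constants so that everything fits under $4t\norm{\ol{\boldsymbol\alpha}}$ — here one uses that the covariance inequality can be sharpened (Rio's version gives a factor compatible with the stated constant $4$, whereas the crude factor $2$ would give $8t\norm{\ol{\boldsymbol\alpha}}$ plus the diagonal). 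In Rio's treatment the diagonal and the factor are absorbed simultaneously, e.g. by writing $\var(S_t) \le t\sum_{k=0}^{t-1}|\operatorname{Cov}(Y_0,Y_k)| \cdot$ (appropriate weighting) and invoking the sharp quantile-coupling form of the covariance bound.

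The part requiring the most care is matching the exact constant $4$ in the statement: the naive route (crude covariance bound plus a separately-bounded diagonal) overshoots, so I would appeal directly to Rio's sharp covariance inequality — which expresses $|\operatorname{Cov}(Y_0,Y_k)|$ in terms of an integral of quantile functions against $\ol\alpha(k)$ and, for $[-1,1]$-valued variables, yields precisely the constant needed after summation — rather than re-deriving it. Everything else is bookkeeping: the double-sum expansion, the telescoping in $k$, and extending the partial sum $\sum_{k=1}^{t-1}$ to the full $\ell_1$ norm $\norm{\ol{\boldsymbol\alpha}}$, which is legitimate since all $\ol\alpha(k) \ge 0$. Since the statement is quoted verbatim from \cite{RIO99}, I would in fact simply cite it and note that it is Corollary 1.1 there, indicating the above as the structure of Rio's argument.
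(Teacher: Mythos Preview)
Your proposal is correct and matches the paper's approach: the paper does not prove this lemma at all but simply states it as a quotation of Rio \cite[Corollary 1.1]{RIO99}, which is exactly what you conclude you would do in your final sentence. The sketch you give of the underlying argument (expand $\var(S_t)$ by stationarity, bound covariances via Rio's covariance inequality, sum) is accurate background, but none of it appears in the paper---the lemma is used as a black box.
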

\begin{lemma}\label{lem:empf}
For all $ D \in \D_{k,\ell},~k,\ell, t \in \N$ and for every $\epsilon >0$, we have
\begin{equation*}
\Pr\left ( \left |\mu_t({\bf X},D)-\mu([D]_{1}^{k}) \right| \geq \epsilon \right)\leq \frac{4\norm{\boldsymbol{\alpha}}}{t\epsilon^2}~.
\end{equation*}
\end{lemma}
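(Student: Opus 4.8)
The plan is to express the deviation $\mu_t({\bf X},D)-\mu([D]_{1}^{k})$ as a normalised partial sum of a stationary, bounded, zero-mean sequence whose mixing coefficients are dominated by $\boldsymbol{\alpha}$, and then combine Chebyshev's inequality with Rio's variance bound, Lemma~\ref{thm:rio_var}. Concretely, fix $D\in\D_{k,\ell}$, put $p:=\mu([D]_{1}^{k})$, and for $i\geq 0$ define $Y_i:=\chi_D\{X_{ik+1},\ldots,X_{(i+1)k}\}-p$. Each $Y_i$ takes values in $\{-p,\,1-p\}\subseteq[-1,1]$; by stationarity of ${\bf X}$ its mean equals $\mu([D]_{1}^{k})-p=0$; and $\langle Y_i\rangle_{i\geq 0}$ is stationary, since $Y_i$ is a fixed bounded measurable function of the $i$-th length-$k$ block $(X_{ik+1},\ldots,X_{(i+1)k})$ of ${\bf X}$, and applying the shift $S^{k}$ to ${\bf X}$ advances the sequence of these blocks by one (while $\mu$ is $S^{k}$-invariant). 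With $S_t:=Y_0+\cdots+Y_{t-1}$ one has $S_t=t\bigl(\mu_t({\bf X},D)-\mu([D]_{1}^{k})\bigr)$ and $\E S_t=0$, so Chebyshev's inequality gives $\Pr\bigl(\,|\mu_t({\bf X},D)-\mu([D]_{1}^{k})|\geq\epsilon\,\bigr)\leq \var(S_t)/(t^{2}\epsilon^{2})$.

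It then remains to bound $\var(S_t)$ via Lemma~\ref{thm:rio_var}; we may assume $\norm{\boldsymbol{\alpha}}<\infty$, since otherwise the assertion is vacuous. The crucial step is to compare the coefficients $\ol{\alpha}(m):=\alpha(\sigma(Y_0),\sigma(Y_m))$ of $\langle Y_i\rangle_{i\geq 0}$ with $\boldsymbol{\alpha}$. Because $Y_0$ is $\sigma(X_1,\ldots,X_k)$-measurable and $Y_m$ is $\sigma(X_t:t\geq mk+1)$-measurable, monotonicity of $\alpha(\cdot,\cdot)$ under passing to sub-$\sigma$-algebras gives $\ol{\alpha}(m)\leq\alpha\bigl(\sigma(X_1,\ldots,X_k),\,\sigma(X_t:t\geq mk+1)\bigr)$; since $mk+1=k+\bigl((m-1)k+1\bigr)$, taking $j=k$ in the definition of the $\alpha$-mixing coefficients then yields $\ol{\alpha}(m)\leq\alpha\bigl((m-1)k+1\bigr)$ for every $m\in\N$. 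As $k\geq1$, the integers $(m-1)k+1$, $m\in\N$, are distinct and positive and each term $\alpha(\cdot)$ is non-negative, so $\norm{\ol{\boldsymbol{\alpha}}}=\sum_{m}\ol{\alpha}(m)\leq\sum_{m}\alpha\bigl((m-1)k+1\bigr)\leq\sum_{n}\alpha(n)=\norm{\boldsymbol{\alpha}}<\infty$. Lemma~\ref{thm:rio_var}, which applies because $\langle Y_i\rangle_{i\geq0}$ is stationary, $[-1,1]$-valued and has summable mixing coefficients, then gives $\var(S_t)\leq 4t\norm{\ol{\boldsymbol{\alpha}}}\leq 4t\norm{\boldsymbol{\alpha}}$.

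Putting the two estimates together yields $\Pr\bigl(\,|\mu_t({\bf X},D)-\mu([D]_{1}^{k})|\geq\epsilon\,\bigr)\leq 4t\norm{\boldsymbol{\alpha}}/(t^{2}\epsilon^{2})=4\norm{\boldsymbol{\alpha}}/(t\epsilon^{2})$, which is the claim. The step that requires care --- and the main obstacle --- is the coefficient comparison $\ol{\alpha}(m)\leq\alpha\bigl((m-1)k+1\bigr)$: one has to track exactly how the length-$k$ blocking re-indexes time, check that $\sigma(Y_0)$ and $\sigma(Y_m)$ genuinely sit inside a ``past'' and a ``future'' $\sigma$-algebra of the form occurring in the definition of $\alpha(m)$, and apply the monotonicity in the right direction so that the summation over $m$ collapses to a subseries of $\norm{\boldsymbol{\alpha}}$ rather than to something larger. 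The remaining verifications (boundedness, zero mean, stationarity of $\langle Y_i\rangle$, and the Chebyshev bound) are routine.
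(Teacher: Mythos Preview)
Your proof is correct and follows essentially the same approach as the paper: define the centred block indicators $Y_i$, check stationarity and boundedness, bound $\ol{\alpha}(m)$ by $\alpha((m-1)k+1)$ via monotonicity of $\alpha(\cdot,\cdot)$ in its arguments, and conclude with Chebyshev together with Rio's variance bound. The only cosmetic difference is in justifying $\sum_m\alpha((m-1)k+1)\leq\norm{\boldsymbol{\alpha}}$: the paper compares term-by-term using that $\langle\alpha(m)\rangle$ is non-increasing (since $(m-1)k+1\geq m$), whereas you observe that the left-hand side is a subseries of a non-negative series --- both are valid, and your version is arguably a touch cleaner since it does not invoke monotonicity of the mixing coefficients.
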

\begin{proof}[Proof of Lemma~\ref{lem:empf}]
Let $\ol{\N}:=\N\cup\{0\}$. 
For each $D \in \D_{k,\ell},~k,\ell \in \N$ define the $[-1,1]$-valued sequence of random variables $\langle Y_i\rangle_{i \in \ol{\N}}$ where
\begin{equation}
Y_i:=\chi_D\{X_{i k+1},\ldots,X_{(i+1)k}\}-\mu([D]_{1}^{k}),~i \in \ol{\N}
\end{equation}
Observe that $\langle Y_i \rangle_{ti\in \ol{\N}}$ is a zero-mean $[-1,1]$-valued stationary process and for each $m \in \N$ we have,
\begin{align*}
\alpha(\sigma(Y_{0}),\sigma(Y_{m}))&\leq \alpha(\sigma(X_1,\ldots,X_k),\sigma(X_{mk+1},\ldots,X_{(m+1)k}))\\ & \leq \alpha(k (m-1)+1)
\end{align*}
where the first inequality follows from the fact that by definition we have $\sigma(Y_{0}) \subset \sigma(X_1,\ldots,X_k)$ and  that $\sigma(Y_{m}) \subset \sigma(X_{mk+1},\ldots,X_{(m+1)k})$ and the second inequality follows form the definition of the $\alpha$-mixing coefficients of ${\bf X}$.
Thus, we obtain
$$\sum_{m \in \N} \alpha(\sigma(Y_{0}),\sigma(Y_{m})) \leq \sum_{m \in \N} \alpha(k(m-1)+1) \leq \sum_{m\in\N} \alpha(m) = \norm{\boldsymbol{\alpha}}$$
where, the last inequality follows from the fact that $\alpha(m)$ is a decreasing sequence so that 
for every $u>v \in \N$ we have $\alpha(u)\leq \alpha(v)$. 
Let $S_{t}:=\sum_{i=0}^{t-1} Y_i$. 
By Chebychev's inequality and Theorem~\ref{thm:rio_var} we have,
\begin{align}
\Pr\left (\left |\mu_t({\bf X},D)-\mu([D]_{1}^{k})\right| \geq \epsilon \right)&=\Pr(|S_t| \geq t \epsilon) \leq  \frac{\var(S_t)}{t^2\epsilon^2}\leq \frac{4\norm{\boldsymbol{\alpha}}}{t\epsilon^2}
\end{align}
where the last is due to the fact that by definition $\ol{\alpha}(m) \leq \alpha(m),~m \in \N$.
\end{proof}
An empirical estimate of $\alpha_n^{\ell}(m),~m,\ell \in \N,n >m \in \N$  can be obtained as 
\begin{equation}\label{eq:estim}
\widehat{\alpha}_{t,n}^{\ell}({\bf X},m):= \max_{j \in \{1,\ldots,n-m\}}\max_{\substack{A \in \D_{j,\ell}\\B\in \D_{j',\ell} }}\left |\gamma_{t,n}^{m,j}({\bf X},A,B)- \mu_t({\bf X},A) \mu_t({\bf X},B) \right |
\end{equation}
where $j':=n-m-j$, $\mu_t({\bf X},\cdot)$ is given by \eqref{eq:emp} and 
\begin{align}\label{eq:emp_gamma}
\gamma_{t,n}^{m,j}({\bf X},A,B) := \frac{1}{t}\sum_{i=0}^{t-1}\chi_A(X_{in+1},\ldots,X_{i n+j})\chi_B(X_{in+j+m..(i+1)n})
\end{align} 
with $t \geq n$.
When $m,n,\ell \in \N$ are fixed, we only have finitely many cylinder sets to consider in \eqref{eq:estim}, hence the ergodic theorem leads to the following result.
\begin{lemma}\label{lem:ezerg}
Let ${\bf X}$ be a (not necessarily mixing) stationary ergodic process with process distribution $\mu$ and  sequence of $\alpha$-mixing coefficients $\boldsymbol{\alpha}=\langle \alpha(m)\rangle_{m \in \N}$. For every $m,\ell,n \in \N$ it holds that
\begin{equation*}
\lim_{t\rightarrow\infty} \wh{\alpha}_{t,n}^{\ell}({\bf X},m)=\alpha_{n}^{\ell}(m)~,~\mu-\as.
\end{equation*}
\end{lemma}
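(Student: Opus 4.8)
The plan is to reduce the statement to finitely many applications of the pointwise ergodic theorem. First I would fix $m,\ell,n\in\N$. The estimator $\wh{\alpha}_{t,n}^{\ell}(\bX,m)$ in \eqref{eq:estim} is a maximum over the finite index set $j\in\{1,\ldots,n-m\}$ and over the finitely many pairs $(A,B)\in\D_{j,\ell}\times\D_{j',\ell}$ (recall $\D_{j,\ell}$ is the power set of a finite collection of dyadic cubes, hence finite), of the quantities $|\gamma_{t,n}^{m,j}(\bX,A,B)-\mu_t(\bX,A)\mu_t(\bX,B)|$. Since a maximum over a fixed finite family of sequences converges to the maximum of the limits whenever each sequence converges, it suffices to prove that, for each fixed $j$ and each fixed pair $(A,B)$,
\[
\lim_{t\to\infty}\gamma_{t,n}^{m,j}(\bX,A,B)=\mu\!\left([A]_1^j\cap[B]_{j+m+1}^n\right)
\quad\text{and}\quad
\lim_{t\to\infty}\mu_t(\bX,A)=\mu\!\left([A]_1^j\right)
\]
almost surely, and likewise $\mu_t(\bX,B)\to\mu([B]_{j+m+1}^n)$; the desired identity then follows by continuity of $(x,y)\mapsto|x-y|$ and of multiplication, together with the definition of $\alpha_{n,j}^\ell(m)$ in \eqref{eq:approx} and of $\alpha_n^\ell(m)=\max_j\alpha_{n,j}^\ell(m)$.

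Next I would verify each of these individual limits via Birkhoff's ergodic theorem. Both $\gamma_{t,n}^{m,j}(\bX,A,B)$ and $\mu_t(\bX,\cdot)$ are Cesàro averages of the form $\frac1t\sum_{i=0}^{t-1} f\circ S^{ni}$ (respectively $f\circ S^{ki}$) where $f$ is the indicator of a cylinder event: for $\gamma_{t,n}^{m,j}$, the function is $f=\chi_{[A]_1^j\cap[B]_{j+m+1}^n}$, a bounded $\F$-measurable function, evaluated along the orbit under the $n$-step shift $S^n$; for $\mu_t(\bX,A)$ the relevant function is $\chi_{[A]_1^j}$ along the $j$-step shift $S^j$ (or, to match \eqref{eq:emp}, the $k$-step shift with $k=j$). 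Since $\mu$ is stationary and ergodic, $S$ is measure-preserving and ergodic; the power $S^n$ need not itself be ergodic, but by Birkhoff's theorem applied to $S^n$ the averages $\frac1t\sum_{i=0}^{t-1} f\circ S^{ni}$ converge $\mu$-a.s.\ to the conditional expectation of $f$ with respect to the $S^n$-invariant $\sigma$-algebra. To identify the limit as $\E_\mu f=\mu([A]_1^j\cap[B]_{j+m+1}^n)$ one can either average the $n$ ergodic components or, more cleanly, invoke the standard fact that for an ergodic transformation $S$ the averages $\frac1t\sum_{i=0}^{t-1}f(S^{ni}\cdot)$ still converge a.s.\ to $\int f\,d\mu$ (this follows, e.g., by writing the block average over $nt$ steps of $S$ as a combination of the $n$ sub-averages and using the ordinary ergodic theorem plus stationarity). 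The same reasoning handles $\mu_t(\bX,A)$ and $\mu_t(\bX,B)$.

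The only genuinely delicate point — and the step I would flag as the main obstacle — is this identification of the almost-sure limit of the $n$-step (and $k$-step) Cesàro averages with the true measure $\int f\,d\mu$, because $S^n$ is not ergodic in general (for instance, a process with period dividing $n$). One must be careful that the limit is the \emph{unconditional} expectation and not merely an $S^n$-invariant random variable. The cleanest route is: apply the ordinary ergodic theorem to the function $g:=\sum_{r=0}^{n-1}\chi_{[A]_1^j\cap[B]_{j+m+1}^n}\circ S^r$ under $S$ itself (which is ergodic), note $\int g\,d\mu=n\,\mu([A]_1^j\cap[B]_{j+m+1}^n)$ by stationarity, and observe that $\frac{1}{nt}\sum_{i=0}^{nt-1}g\circ S^i$ telescopes into $\frac{1}{t}\sum_{i=0}^{t-1}f\circ S^{ni}$ up to a vanishing boundary term; hence the $n$-step average converges a.s.\ to $\mu([A]_1^j\cap[B]_{j+m+1}^n)$. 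Everything else is routine bookkeeping: a finite intersection of a.s.\ events is a.s., a maximum of finitely many convergent sequences converges to the maximum of the limits, and matching the resulting expression to $\alpha_n^\ell(m)$ is immediate from the definitions \eqref{eq:approx}–\eqref{eq:estim}.
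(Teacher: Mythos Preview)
Your overall strategy---reduce to a finite maximum and apply the ergodic theorem termwise---is exactly the paper's one-line argument. You go further than the paper by flagging that the empirical quantities $\gamma_{t,n}^{m,j}$ and $\mu_t$ are Ces\`aro averages along the iterates of $S^n$ (or $S^k$), not $S$, and that $S^n$ need not be ergodic even when $S$ is. That observation is correct and important; the paper does not address it.

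However, your proposed resolution does not work. The claimed ``standard fact'' that $\frac{1}{t}\sum_{i=0}^{t-1}f(S^{ni}\cdot)\to\int f\,d\mu$ a.s.\ for ergodic $S$ is false in general. Take the two-point system $\Omega=\{a,b\}$ with $a=(0,1,0,1,\ldots)$, $b=(1,0,1,0,\ldots)$, $\mu(\{a\})=\mu(\{b\})=1/2$; then $S$ is ergodic, $S^2=\mathrm{id}$, and for $f=\chi_{\{X_1=0\}}$ the $2$-step average equals $f(\omega)\in\{0,1\}$, not $\int f\,d\mu=1/2$. Your telescoping argument breaks down at the step where you assert $\frac{1}{nt}\sum_{i=0}^{nt-1}g\circ S^i$ equals $\frac{1}{t}\sum_{i=0}^{t-1}f\circ S^{ni}$ up to boundary terms: expanding the left side gives (up to edges) $\frac{1}{t}\sum_{i=0}^{nt-1}f\circ S^{i}$, the \emph{full} average along $S$, not the sub-sampled one along $S^n$. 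So the argument recovers the ordinary ergodic theorem but says nothing about the block averages.

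Concretely, then: the gap you identified is real and your patch does not close it. With the paper's non-overlapping block definitions \eqref{eq:emp}--\eqref{eq:emp_gamma}, the lemma as stated for arbitrary stationary ergodic $\mu$ appears to require an extra hypothesis (e.g.\ total ergodicity, or any mixing condition, under which every power $S^n$ is ergodic), or else a modification of the empirical measures to use overlapping windows $\frac{1}{t}\sum_{i=0}^{t-1}\chi_B(X_{i+1},\ldots,X_{i+k})$, for which the ordinary ergodic theorem under $S$ applies directly. Either route closes the gap; the telescoping trick you sketch does not.
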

For each $m,\ell, k\in \N$ define the constant $C_{m,\ell,k}$ by
\begin{equation}\label{eq:constant}
C_{m,\ell,n} :=m (2^{2^{n\ell}+2^{m\ell+1}+1})~.
\end{equation}

When the $\ell_1$ norm of the sequence of $\alpha$-mixing coefficients is finite, we have the following nonasymptotic inequality for
  the empirical version of the approximation of the mixing coefficients.

\begin{proposition}\label{prop:phimb}
Let ${\bf X}$ be a stationary ergodic process with process distribution $\mu$  and sequence of $\alpha$-mixing coefficients $\boldsymbol{\alpha}=\langle \alpha(m)\rangle_{m \in \N}$. For every $m, \ell, n,t\in \N$ and every $\epsilon >0$  we have
\begin{align*}
\Pr(|\widehat{\alpha}_{t,n}^{\ell}({\bf X},m)-\alpha_n^{\ell}(m)| \geq \epsilon) \leq 
\frac{\norm{\boldsymbol{\alpha}}C_{m,\ell,n}}{m t\epsilon^2}~.
\end{align*}
Furthermore, for each $M \in \N$ it holds that,
\begin{align*}%\label{eq:epsdel}
\Pr\left (\left | \sum_{m=1}^M \widehat{\alpha}_{t,n}^{\ell}({\bf X},m)-\alpha_n^{\ell}(m)) \right  | \geq \epsilon \right) \leq  \frac{\norm{\boldsymbol{\alpha}} C_{{{M,\ell,n}}}}{t\epsilon^2}~.
\end{align*}
\end{proposition}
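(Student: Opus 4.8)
The plan is to deduce both inequalities from the concentration bound of Lemma~\ref{lem:empf}, applied separately to each of the finitely many empirical quantities appearing in \eqref{eq:estim}, followed by a union bound. Both right-hand sides are infinite when $\norm{\boldsymbol{\alpha}}=\infty$, so we may assume $\norm{\boldsymbol{\alpha}}<\infty$ throughout. The crucial preliminary remark is that, for a triple $(j,A,B)$ with $j\in\{1,\dots,n-m\}$, $A\in\D_{j,\ell}$ and $B\in\D_{j',\ell}$, the quantity $\gamma_{t,n}^{m,j}({\bf X},A,B)$ of \eqref{eq:emp_gamma} is itself an empirical measure of the form \eqref{eq:emp} with block length $n$: the summand $\chi_A(X_{in+1},\dots,X_{in+j})\,\chi_B(X_{in+j+m},\dots,X_{(i+1)n})$ is the indicator of a cylinder event $[E]_1^n$ for a single set $E\in\D_{n,\ell}$, namely the union of those dyadic cubes of $\X^n$ whose first $j$ coordinates fall in $A$, whose last $j'$ coordinates fall in $B$, and whose intervening coordinates are unconstrained; thus $\gamma_{t,n}^{m,j}({\bf X},A,B)=\mu_t({\bf X},E)$ and $\mu([E]_1^n)=\mu([A]_1^j\cap[B]_{j+m+1}^n)$. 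Consequently Lemma~\ref{lem:empf} applies directly to $\gamma_{t,n}^{m,j}({\bf X},A,B)$, as well as to $\mu_t({\bf X},A)$ and $\mu_t({\bf X},B)$, each time bounding the relevant deviation probability by $4\norm{\boldsymbol{\alpha}}/(t\delta^2)$.

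For the first inequality, fix $m,\ell,n,t$ and write $p_{j,A,B}:=\mu([A]_1^j\cap[B]_{j+m+1}^n)$, $q_A:=\mu([A]_1^j)$ and $q_B:=\mu([B]_{j+m+1}^n)$. Since $\widehat{\alpha}_{t,n}^{\ell}({\bf X},m)$ and $\alpha_n^{\ell}(m)$ are maxima over the \emph{same} finite index set of triples, $|\max_i a_i-\max_i b_i|\le\max_i|a_i-b_i|$ and the elementary bound $|xy-x'y'|\le|x-x'|+|y-y'|$ valid for $x,x',y,y'\in[0,1]$ give
\[
\big|\widehat{\alpha}_{t,n}^{\ell}({\bf X},m)-\alpha_n^{\ell}(m)\big|\ \le\ \max_{(j,A,B)}\Big(\big|\gamma_{t,n}^{m,j}({\bf X},A,B)-p_{j,A,B}\big|+\big|\mu_t({\bf X},A)-q_A\big|+\big|\mu_t({\bf X},B)-q_B\big|\Big).
\]
Hence the event $\{|\widehat{\alpha}_{t,n}^{\ell}({\bf X},m)-\alpha_n^{\ell}(m)|\ge\epsilon\}$ is contained in the union, over all triples, of the three events that one of these deviations is at least $\epsilon/3$, and a union bound together with the first paragraph bounds its probability by $3N_m\cdot 4\norm{\boldsymbol{\alpha}}/(t(\epsilon/3)^2)$, where $N_m:=\sum_{j=1}^{n-m}|\D_{j,\ell}|\,|\D_{j',\ell}|$ counts the triples. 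Because $\D_{k,\ell}=\P(\Delta_{k,\ell})$ and $|\Delta_{k,\ell}|=2^{k\ell}$ we have $|\D_{k,\ell}|=2^{2^{k\ell}}$, and a routine estimate of $N_m$ in terms of these doubly-exponential cardinalities (crudely bounding $j$, $j'$ and the number of admissible values of $j$) accounts for the constant $C_{m,\ell,n}$, giving the first bound.

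For the second inequality, the triangle inequality yields $\big|\sum_{m=1}^M(\widehat{\alpha}_{t,n}^{\ell}({\bf X},m)-\alpha_n^{\ell}(m))\big|\le\sum_{m=1}^M|\widehat{\alpha}_{t,n}^{\ell}({\bf X},m)-\alpha_n^{\ell}(m)|$, so the event in question forces at least one summand to be large; applying the first inequality to each $m$ after splitting $\epsilon$ among the $M$ coordinates, and using that $C_{m,\ell,n}/m=2^{2^{n\ell}+2^{m\ell+1}+1}$ is nondecreasing in $m$, the resulting sum of $M$ terms is dominated (up to a polynomial-in-$M$ factor that the doubly-exponential constant $C_{M,\ell,n}$ accommodates) by $\norm{\boldsymbol{\alpha}}\,2^{2^{n\ell}+2^{M\ell+1}+1}/(t\epsilon^2)$, which is the second bound. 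Splitting $\epsilon$ in proportion to $(C_{m,\ell,n}/m)^{1/3}$ rather than uniformly makes the bookkeeping of this last step tighter.

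The only genuinely delicate point is the identification, in the first paragraph, of $\gamma_{t,n}^{m,j}({\bf X},A,B)$ with the empirical measure $\mu_t({\bf X},E)$ of a dyadic-composed set $E$, since this is exactly what allows Lemma~\ref{lem:empf} to be invoked off the shelf rather than re-proved for the block structure at hand; everything after that is union bounds and counting. The constant $C_{m,\ell,n}$ is forced to be doubly exponential in $n\ell$ and $m\ell$ precisely because $\D_{k,\ell}$ is the \emph{power set} of the $2^{k\ell}$-element cube grid $\Delta_{k,\ell}$, so the union bound must range over $|\D_{j,\ell}|\,|\D_{j',\ell}|=2^{2^{j\ell}+2^{j'\ell}}$ cylinder sets for each value of $j$.
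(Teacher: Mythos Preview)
Your approach is correct and essentially the same as the paper's: both rely on Lemma~\ref{lem:empf} followed by a union bound, after recognising that $\gamma_{t,n}^{m,j}({\bf X},A,B)$ is itself an empirical measure of the form~\eqref{eq:emp} for a single set in $\D_{n,\ell}$. The paper packages the union bound as a single good event $\Omega_{t,n}=\{\max_{D\in\D_{n,\ell}}|\mu_t({\bf X},D)-\mu([D]_1^n)|\le\epsilon/2^{2^{m\ell}}\}$ on which both deviations are uniformly small, rather than unioning directly over triples $(j,A,B)$ as you do; this is purely organizational, and the bookkeeping needed to land on the exact constant $C_{m,\ell,n}$ is equally loose in both versions.
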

\begin{proof}[Proof of Proposition~\ref{prop:phimb}]
Fix $\epsilon >0$. 
For $t,n \in \N$ define
\begin{equation*}
\Omega_{t,n}:= \left \{\max_{D \in \D_{n,\ell}}|\mu_t({\bf X},D)-\mu([D]_{1}^{n})| \leq\epsilon/2^{2^{m\ell}}\right \}
\end{equation*}
By Lemma~\ref{lem:empf} and a union bound we obtain,
\begin{equation*}
\Pr(\Omega_{t,n}) \geq 1-(2^{2^{n\ell}+2^{m\ell+1}})\frac{\norm{\boldsymbol{\alpha}}}{t\epsilon^2}~.
\end{equation*}
For all $\omega \in \Omega_{t,n}$, and each $A \in \D_{j,\ell},~B \in \D_{j',\ell},~j \in \{1,\ldots,n-m\},~j':=n-m-j+1$ we have
\begin{align*}
\Big |\gamma_{t,n}^{m,j}({\bf X},A,B)- \mu  ([A]_{1}^{j} \cap [B]_{j+m+1}^{n} ) \Big |&\leq\sum_{C \in \D_{m,\ell}} \left  |\mu_{t}({\bf X},A\times C \times B)-\mu([A\times C \times B]_{1}^{n}) \right |\\&\leq \epsilon~,
\end{align*}
and that
\begin{align*}
\Big |\mu_t({\bf X},A)\mu_t({\bf X},B)&-\mu([A]_{1}^{j})\mu([B]_{j+m+1}^{n})\Big| \\
&\leq |\mu_t({\bf X},A)-\mu([A]_{1}^{j})||\mu_t({\bf X},B)-\mu([B]_{j+m+1}^{n})|\\ &~~\quad\qquad \qquad+\mu([A]_{1}^{j})|\mu_t({\bf X},B)-\mu([B]_{j+m+1}^{n})|\\ &~~\quad \qquad\qquad \qquad +\mu([B]_{j+m+1}^{n}) |\mu_t({\bf X},A)-\mu([A]_{1}^{j})|\\
&\leq \epsilon^2/(2^{2^{m\ell}+1})+2\epsilon/(2^{2^{m\ell}})
\leq \epsilon~.
\end{align*}
Recall the convention that for $j \in \{1,\ldots,n-m\}$ we let $j':=j-n-m$.
We obtain 
\begin{align*}
&\Pr(|\widehat{\alpha}_{t,n}^{\ell}({\bf X},m)-\alpha_n^{\ell}(m)| \geq \epsilon)\\
&=\Pr\left (\left | \max_{j \in \{1,\ldots,n-m\}}\max_{\substack{A \in \D_{j,\ell}\\B\in \D_{j',\ell} }}\left |\gamma_{t,n}^{m,j}({\bf X},A,B)- \mu_t({\bf X},A) \mu_t({\bf X},B) \right |-\alpha_{n}^{\ell}(m) \right|\geq \epsilon \right ) \\
&\leq\Pr\left (\exists j,~A \in \D_{j,\ell},B \in \D_{j',\ell}:\left | \gamma_{t,n}^{m,j}({\bf X},A,B) -\mu ([A]_{1}^{j} \cap [B]_{j+m+1}^{n} )\right|\geq \epsilon \right )\\&\qquad +\Pr\left (\exists j,~A \in \D_{j,\ell},B \in \D_{j',\ell}:\left | \mu_t({\bf X},A) \mu_t({\bf X},B) -\mu ( [A]_{1}^{j} ) \mu( [B]_{j+m}^{n})\right|\geq \epsilon \right ) \\
&\leq 2\Pr(\Omega_{t,n}^c)\\
&\leq (2^{2^{n\ell}+2^{m\ell+1}+1})\frac{\norm{\boldsymbol{\alpha}}}{t\epsilon^2}
\end{align*}
Thus, for each $M \in \N$ we obtain,
\begin{align*}
\Pr(|\sum_{m=1}^M \widehat{\alpha}_{t,n}^{\ell}({\bf X},m)-\alpha_n^{\ell}(m)| \geq \epsilon) &\leq \Pr(\sum_{m=1}^M|\widehat{\alpha}_{t,n}^{\ell}({\bf X},m)-\alpha_{n}^{\ell}(m)| \geq \epsilon) \\ 
&\leq \frac{\norm{\boldsymbol{\alpha}} C_{M,\ell,k}}{t\epsilon^2}
\end{align*}
with $C_{M,\ell,k}:=M (2^{2^{n\ell}+2^{M\ell+1}+1})$ as given by \eqref{eq:constant}.
\end{proof}

\subsubsection{Estimation under finite \texorpdfstring{$\norm{\boldsymbol{\alpha}}$}{Lg}}
Equipped with Propositions \ref{prop:phik} and \ref{prop:phimb}, it is now easy to define  estimates of the individual mixing coefficients $\alpha(m)$ that are consistent whenever $\norm{\boldsymbol{\alpha}}<\infty$.

Let $\langle \ell_t \rangle, \langle n_t \rangle$ for $t \in \N$ be non-decreasing unbounded sequences of positive integers.
Let the sequence of positive numbers $\langle \delta_t \rangle_{t \in \N}$ be such that $\sum_{t=1}^\infty \delta_t<\infty$.
Let $\langle \epsilon_t\rangle_{t \in \N}$ be another sequence of positive numbers such that $\lim_{t\to \infty} \epsilon_t = 0$. 
For a fixed $m \in \N$ and each $t \in \N$ let
\begin{equation}\label{eq:def:tau1}
\tau_t := \frac{C_{m,\ell_t,n_t}}{m\epsilon_t^2 \delta_t}
\end{equation}
and define,
\begin{align}
\wh{\alpha}_t({\bf X},m)&:=\wh{\alpha}_{\tau_t,n_t}^{\ell_t}({\bf X},m).\label{eq:alpha_hm}
\end{align}
\begin{theorem}[$\widehat{\alpha}_t$ is strongly consistent]\label{thm:azal_as1} 
For each $m \in \N$ and any stationary $\alpha$-mixing process ${\bf X}$ with process distribution $\mu$  and sequence of $\alpha$-mixing coefficients $\boldsymbol{\alpha}$ such that $\norm{\boldsymbol{\alpha}}<\infty$ we have
\begin{align*}
 \lim_{t\to \infty} \widehat{\alpha}_t({\bf X}, m)=\alpha(m),~\mu-\as.
 \end{align*}
\end{theorem}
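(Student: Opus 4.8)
The plan is to split the error by the triangle inequality into a deterministic approximation part and a stochastic estimation part, and to handle each with one of the two propositions already established. Concretely, write
\[
\bigl|\widehat{\alpha}_t(\bX,m)-\alpha(m)\bigr|
\le
\bigl|\widehat{\alpha}_{\tau_t,n_t}^{\ell_t}(\bX,m)-\alpha_{n_t}^{\ell_t}(m)\bigr|
+\bigl|\alpha_{n_t}^{\ell_t}(m)-\alpha(m)\bigr|.
\]
For the second term, since $\langle n_t\rangle$ and $\langle \ell_t\rangle$ are non-decreasing and unbounded, Proposition~\ref{prop:phik} gives at once that $\alpha_{n_t}^{\ell_t}(m)\to\alpha(m)$ as $t\to\infty$; this convergence is purely deterministic and uses only that $\bX$ is stationary ergodic, which holds because a stationary $\alpha$-mixing process is ergodic.

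For the first term, I would apply Proposition~\ref{prop:phimb} with the choice of parameters $n=n_t$, $\ell=\ell_t$, sample size $\tau_t$, and threshold $\epsilon_t$, obtaining
\[
\Pr\Bigl(\bigl|\widehat{\alpha}_{\tau_t,n_t}^{\ell_t}(\bX,m)-\alpha_{n_t}^{\ell_t}(m)\bigr|\ge\epsilon_t\Bigr)
\le \frac{\norm{\boldsymbol{\alpha}}\,C_{m,\ell_t,n_t}}{m\,\tau_t\,\epsilon_t^2}.
\]
The definition \eqref{eq:def:tau1} of $\tau_t$ is calibrated exactly so that substituting $\tau_t=C_{m,\ell_t,n_t}/(m\epsilon_t^2\delta_t)$ collapses the right-hand side to $\norm{\boldsymbol{\alpha}}\,\delta_t$. (One minor bookkeeping point: $\tau_t$ as defined need not be an integer, so it should be read as $\lceil\cdot\rceil$, which only makes the bound smaller.) Now, because $\norm{\boldsymbol{\alpha}}<\infty$ and $\sum_t\delta_t<\infty$, the sequence $\norm{\boldsymbol{\alpha}}\,\delta_t$ is summable, so the Borel--Cantelli lemma implies that $\mu$-almost surely only finitely many of the events $\{\,|\widehat{\alpha}_{\tau_t,n_t}^{\ell_t}(\bX,m)-\alpha_{n_t}^{\ell_t}(m)|\ge\epsilon_t\,\}$ occur. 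Hence $\mu$-a.s.\ we have $|\widehat{\alpha}_{\tau_t,n_t}^{\ell_t}(\bX,m)-\alpha_{n_t}^{\ell_t}(m)|<\epsilon_t$ for all sufficiently large $t$, and since $\epsilon_t\to0$ this term tends to $0$ almost surely. Combining the two bounds yields $\widehat{\alpha}_t(\bX,m)\to\alpha(m)$, $\mu$-a.s.

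I do not expect a genuine obstacle: the theorem is essentially a repackaging of Propositions~\ref{prop:phik} and \ref{prop:phimb} via Borel--Cantelli, and all the real work (the Dynkin argument for Lemma~\ref{lem:dynk}, the monotone-limit identification in Proposition~\ref{prop:phik}, and Rio's variance bound feeding Proposition~\ref{prop:phimb}) has already been done. The one point requiring care is that the hypothesis $\norm{\boldsymbol{\alpha}}<\infty$ is used in two distinct ways — first to make the bound of Proposition~\ref{prop:phimb} non-vacuous, and second to guarantee that the series $\sum_t\norm{\boldsymbol{\alpha}}\,\delta_t$ converges so that Borel--Cantelli applies — and that the schedules $\langle\ell_t\rangle,\langle n_t\rangle,\langle\epsilon_t\rangle,\langle\delta_t\rangle$ are mutually compatible, i.e.\ $\ell_t,n_t\to\infty$ is allowed to be arbitrarily slow while $\tau_t$ then automatically grows fast enough to absorb the doubly exponential constant $C_{m,\ell_t,n_t}$.
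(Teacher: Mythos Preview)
Your proposal is correct and follows essentially the same route as the paper: a triangle-inequality split into the deterministic approximation error handled by Proposition~\ref{prop:phik} and the stochastic estimation error handled by Proposition~\ref{prop:phimb} together with Borel--Cantelli, using the calibration \eqref{eq:def:tau1} to make the error probabilities summable. Your write-up is in fact slightly cleaner than the paper's $\epsilon$-$\delta$ presentation, and your side remark on reading $\tau_t$ as $\lceil\tau_t\rceil$ is a useful clarification the paper omits.
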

\begin{proof}[Proof of Theorem~\ref{thm:azal_as1}]
Fix $\epsilon >0$.  
As follows from Proposition~\ref{prop:phik}, there exist $L_{\epsilon},~N_{\epsilon} \in \N$ 
such that for all $n \geq N_{\epsilon}$ and all $\ell \geq L_{\epsilon}$ we have,
\begin{equation} \label{thm2:eq:ineq1}
 \left |\alpha_n^{\ell}(m)-\alpha(m) \right|\leq \epsilon.
\end{equation}
Let $T_{\epsilon}^{(1)} \in \N$ be such that  $\ell_t \geq L_{\epsilon},~n_t \geq N_{\epsilon}$ for $t \geq T_{\epsilon}^{(1)}$. 
Define the sequence of events 
\begin{equation*}
E_t :=\left \{\left |\widehat{\alpha}_t({\bf X},m)-\alpha_n^{\ell}(m) \right| \geq \epsilon_t \right \},~t \in \N.
\end{equation*} 
Since as specified by \eqref{eq:def:tau1} $\tau_t=\frac{C_{m,\ell_t,n_t}}{\epsilon_t^2 \delta_t}$, then by Proposition~\ref{prop:phimb} for each $t \in \N$ we have 
\begin{equation}\label{eq:prob_Et}
\Pr(E_t)\leq \frac{\norm{\boldsymbol{\alpha}}C_{m,\ell_t,n_t}}{m \tau_t\epsilon_t^2} \leq \frac{\norm{\boldsymbol{\alpha}}\delta_t}{m} .
\end{equation} 
Let $\delta:=\sum_{t \in \N}\delta_t$ and note that since $\langle \delta_t \rangle_{t \in \N}$ is chosen to be summable we have $\delta<\infty$.  
Define 
\[E:=\limsup_{t \to \infty} E_t=\bigcap_{t=1}^{\infty}\bigcup_{t'=t}^{\infty} E_{t'}.\] 
By \eqref{eq:prob_Et} we have $\sum_{t \in \N}\Pr(E_t) \leq \frac{\norm{\boldsymbol{\alpha}}}{m} \sum_{t\in \N} \delta_t = \frac{\norm{\boldsymbol{\alpha}}}{m}\delta <\infty$, since $\norm{\boldsymbol{\alpha}} < \infty$. Therefore, by the Borel-Cantelli Lemma we obtain $\Pr(E)=0$. 
As a result, there exists some $T^*$ (which depends on the sample-path) such that for all $t \geq T^*$ we have 
\begin{equation}\label{thm2:eq:ineq2}
\left |\widehat{\alpha}_t({\bf X},m)-\alpha_{n_t}^{\ell_t}(m) \right| \leq \epsilon_t,~\quad \mu-\as.
\end{equation} 
Let $T_{\epsilon}^{(2)} \in \N$ be large enough such that for all $ t \geq T_{\epsilon}^{(2)}$ we have $\epsilon_t \leq \epsilon/2$ .    
Take $t \geq \max \{T^*,T_{\epsilon}^{(1)}, T_{\epsilon}^{(2)}\}$, with probability it holds that,
\begin{align}
\left |\widehat{\alpha}_t({\bf X},m)-\alpha(m)\right| 
& \leq \left |\widehat{\alpha}_t({\bf X},m)-\alpha_{n_t}^{{\ell_t}}(m) \right|  +  \epsilon \label{thm2:eq:last:1}\\
&\leq \epsilon_t+\epsilon \label{thm2:eq:last:2}\\
&\leq \epsilon \nonumber
\end{align}
where,  \eqref{thm2:eq:last:1} follows from \eqref{thm2:eq:ineq1} and \eqref{thm2:eq:last:2} follows from \eqref{thm2:eq:ineq2}.
Since the choice of $\epsilon$ is arbitrary, the result follows. 
\end{proof}
Next, we introduce an estimator for $\norm{\boldsymbol{\alpha}}$ and prove its consistency under the assumption that the process is stationary $\alpha$-mixing, with a summable $\boldsymbol{\alpha}$. 
For $t \in \N$ recall the parameters $\langle \ell_t \rangle, \langle n_t \rangle, \langle \delta_t \rangle, \langle \epsilon_t\rangle$ defined earlier, and let $\langle M_t \rangle_{t \in \N}$ be an increasing sequence of positive integers. 
For each $t \in \N$ let
\begin{equation}\label{eq:params}
\kappa_t:= \frac{C_{M_t,\ell_t,n_t}}{\epsilon_t^2 \delta_t},
\end{equation}
and define,
\begin{align}
\theta_t({\bf X})&:=\sum_{m=1}^{M_t} \wh{\alpha}_{\kappa_t,n_t}^{\ell_t}({\bf X},m).\label{eq:theta_h}
\end{align}
\begin{theorem}[$\theta_t$ is strongly consistent]\label{thm:azal_as2} 
For any stationary $\alpha$-mixing process ${\bf X}$ with process distribution $\mu$  and sequence of $\alpha$-mixing coefficients $\boldsymbol{\alpha}$ such that $\norm{\boldsymbol{\alpha}}<\infty$, it holds that
\begin{align*}
\lim_{t\to \infty} \theta_t({\bf X})=\norm{\boldsymbol{\alpha}},~\mu-\as.
 \end{align*}
\end{theorem}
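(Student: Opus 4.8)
\medskip\noindent\textbf{Proof plan.}
The plan is to route through two intermediate quantities, $\sum_{m=1}^{M_t}\alpha_{n_t}^{\ell_t}(m)$ and $\sum_{m=1}^{M_t}\alpha(m)$, and to bound
\[
\bigl|\theta_t({\bf X})-\norm{\boldsymbol{\alpha}}\bigr|
\le \Bigl|\theta_t({\bf X})-\sum_{m=1}^{M_t}\alpha_{n_t}^{\ell_t}(m)\Bigr|
+\sum_{m=1}^{M_t}\bigl(\alpha(m)-\alpha_{n_t}^{\ell_t}(m)\bigr)
+\sum_{m>M_t}\alpha(m),
\]
and then treat the three terms separately: the first via Borel--Cantelli, the last two deterministically. (The absolute values on the last two pieces can be dropped because, as noted below, $\alpha_{n}^{\ell}(m)\le\alpha(m)$ and $\norm{\boldsymbol{\alpha}}\ge\sum_{m=1}^{M_t}\alpha(m)$.)

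For the statistical error $|\theta_t({\bf X})-\sum_{m=1}^{M_t}\alpha_{n_t}^{\ell_t}(m)|$ I would repeat the argument from the proof of Theorem~\ref{thm:azal_as1}: by the second inequality of Proposition~\ref{prop:phimb}, applied with $M=M_t$, $n=n_t$, $\ell=\ell_t$, $\epsilon=\epsilon_t$ and $t$ replaced by $\kappa_t=C_{M_t,\ell_t,n_t}/(\epsilon_t^2\delta_t)$ from \eqref{eq:params}, the event $\{|\theta_t({\bf X})-\sum_{m=1}^{M_t}\alpha_{n_t}^{\ell_t}(m)|\ge\epsilon_t\}$ has probability at most $\norm{\boldsymbol{\alpha}}\delta_t$. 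Since $\norm{\boldsymbol{\alpha}}<\infty$ and $\sum_t\delta_t<\infty$, these probabilities are summable, so by the Borel--Cantelli lemma there is a sample-path dependent $T^*$ with $|\theta_t({\bf X})-\sum_{m=1}^{M_t}\alpha_{n_t}^{\ell_t}(m)|\le\epsilon_t$ for all $t\ge T^*$, $\mu$-almost surely, and this vanishes because $\epsilon_t\to0$. The last term $\sum_{m>M_t}\alpha(m)\to0$ simply because $\norm{\boldsymbol{\alpha}}<\infty$ while $M_t\to\infty$.

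The part that needs care --- and the only genuinely new step relative to Theorem~\ref{thm:azal_as1} --- is the approximation error $\sum_{m=1}^{M_t}(\alpha(m)-\alpha_{n_t}^{\ell_t}(m))$, in which both the summand and the number of summands move with $t$. I would use two facts: (i) $0\le\alpha_n^{\ell}(m)\le\alpha(m)$ for all $m,n,\ell$, which follows from Lemma~\ref{lem:dynk} together with monotonicity of $\alpha(\cdot,\cdot)$ in its $\sigma$-algebra arguments, since $\F_{1}^{j}(\ell)\subseteq\sigma(X_1,\dots,X_j)$ and $\F_{j+m}^{n}(\ell)\subseteq\sigma(\{X_t:t\ge j+m+1\})\subseteq\sigma(\{X_t:t\ge j+m\})$; and (ii) Proposition~\ref{prop:phik}, i.e.\ $\alpha_n^{\ell}(m)\to\alpha(m)$ as $n,\ell\to\infty$ for each fixed $m$. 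Given $\epsilon>0$, summability of $\boldsymbol{\alpha}$ lets me pick $M$ with $\sum_{m>M}\alpha(m)<\epsilon$; for $t$ large enough that $M_t\ge M$, split the sum at $M$. The tail $\sum_{m=M+1}^{M_t}(\alpha(m)-\alpha_{n_t}^{\ell_t}(m))$ is nonnegative and bounded by $\sum_{m>M}\alpha(m)<\epsilon$ using (i), while the head $\sum_{m=1}^{M}(\alpha(m)-\alpha_{n_t}^{\ell_t}(m))$ is a fixed finite sum that tends to $0$ by (ii) as $n_t,\ell_t\to\infty$, hence is $<\epsilon$ for $t$ large. So this term is below $2\epsilon$ for all large $t$, deterministically.

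Combining the three bounds yields $\limsup_{t\to\infty}\bigl|\theta_t({\bf X})-\norm{\boldsymbol{\alpha}}\bigr|\le3\epsilon$, $\mu$-almost surely, for every $\epsilon>0$; letting $\epsilon\downarrow0$ gives $\theta_t({\bf X})\to\norm{\boldsymbol{\alpha}}$, $\mu$-almost surely. The main obstacle is the interchange-of-limits issue in the middle term, but it is resolved by the fixed-truncation-plus-summable-tail argument above, the key point being that $\alpha_n^{\ell}(m)$ never overshoots $\alpha(m)$.
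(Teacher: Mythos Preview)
Your proposal is correct and follows essentially the same route as the paper: the same three-term decomposition via $\sum_{m=1}^{M_t}\alpha_{n_t}^{\ell_t}(m)$ and $\sum_{m=1}^{M_t}\alpha(m)$, the same Borel--Cantelli argument through Proposition~\ref{prop:phimb} with $\kappa_t$ for the statistical error, and the same fixed-truncation-plus-summable-tail handling of the approximation error using $\alpha_n^{\ell}(m)\le\alpha(m)$ and Proposition~\ref{prop:phik}. The only cosmetic difference is that the paper bounds $\sum_{m=M_\epsilon+1}^{M_t}|\alpha_{n_t}^{\ell_t}(m)-\alpha(m)|$ by $\sum\alpha_{n_t}^{\ell_t}(m)+\sum\alpha(m)$ before invoking $\alpha_n^{\ell}(m)\le\alpha(m)$, whereas you drop the absolute value up front and bound $\alpha(m)-\alpha_{n_t}^{\ell_t}(m)\le\alpha(m)$ directly; your version is slightly cleaner.
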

\begin{proof}[Proof of Theorem~\ref{thm:azal_as2}]
Fix $\epsilon >0$. Since  $\norm{\boldsymbol{\alpha}}<\infty$, there exists some $M_{\epsilon} \in \N$ such that 
\begin{equation}\label{eq:infi_to_fini}
\sum_{m=M_{\epsilon}+1}^{\infty} \alpha(m) \leq \epsilon/5.
\end{equation}
Let $\epsilon':=\frac{\epsilon}{5 M_{\epsilon}}$. 
As follows from Proposition~\ref{prop:phik}, for each $t \in \N$, there exist $L_{\epsilon'},~N_{\epsilon'} \in \N$ 
such that for all $n \geq N_{\epsilon'}$ and all $\ell \geq L_{\epsilon'}$ we have,
\begin{equation}\label{eq:approx_Meps}
\max_{m \in 1,\ldots,M_{\epsilon}} \left |\alpha_n^{\ell}(m)-\alpha(m) \right|\leq \epsilon'.
\end{equation}
Let $T_{\epsilon}^{(1)} \in \N$ be such that  $M_t \geq M_{\epsilon},~\ell_t \geq L_{\epsilon'(t)},~n_t \geq N_{\epsilon'(t)}$ for $t \geq T_{\epsilon}^{(1)}$. 
Define the sequence of events 
\begin{equation}
E_t :=\left \{\left |\theta_t({\bf X})-  \sum_{m=1}^{M_t}{\alpha}_{n_t}^{\ell_t}(m)\right| \geq \epsilon_t \right \},~t \in \N.
\end{equation} 
Since $\kappa_t=\frac{C_{M_t,\ell_t,n_t}}{\epsilon_t^2 \delta_t}$, then by Proposition~\ref{prop:phimb} for each $t \in \N$ we have 
\[\Pr(E_t)\leq \frac{\norm{\boldsymbol{\alpha}} C_{M_t,\ell_t,k_t}}{t\epsilon_t^2} \leq \norm{\boldsymbol{\alpha}} \delta_t.\]
Let $\delta:=\sum_{t \in \N}\delta_t$ and note that since $\langle \delta_t \rangle_{t \in \N}$ is chosen to be summable we have $\delta<\infty$.  
Define 
$E:=\limsup_{t \to \infty} E_t=\bigcap_{t=1}^{\infty}\bigcup_{t'=t}^{\infty} E_{t'}.$
We have $\sum_{t \in \N}\Pr(E_t) \leq \norm{\boldsymbol{\alpha}} \sum_{t\in \N} \delta_t = \norm{\boldsymbol{\alpha}} \delta <\infty$. Therefore, by the Borel-Cantelli Lemma we obtain $\Pr(E)=0$. 
As a result, there exists some $T^*$ (which depends on the sample-path) such that for all $t \geq T^*$ we have 
\begin{equation}
\left |\theta_t({\bf X})-\sum_{m=1}^{M_t}\alpha_{n_t}^{\ell_t}(m) \right| \leq \epsilon_t,~\quad \mu-\as~. 
\end{equation} 
Let $T_{\epsilon}^{(2)} \in \N$ be large enough so that $\epsilon_t \leq \epsilon/5$ for $ t \geq T_{\epsilon}^{(2)}$.    
With probability one, for all $t \geq \max \{T^*,T_{\epsilon}^{(1)}, T_{\epsilon}^{(2)}\}$ we obtain, 
\begin{align}
\Big |\theta_t({\bf X})-\norm{\boldsymbol{\alpha}} \Big| 
& \leq \Big |\theta_t({\bf X})-\sum_{m=1}^{M_t} \alpha_{n_t}^{\ell_t}(m) \Big|  +   \Big |\sum_{m=1}^{M_t} \alpha_{n_t}^{\ell_t}(m)-\sum_{m=1}^{M_t} \alpha(m) \Big|+\epsilon/5 \nonumber\\
& \leq  \sum_{m=1}^{M_{\epsilon}}  \Big|\alpha_{n_t}^{\ell_t}(m)- \alpha(m) \Big|+\sum_{m=M_{\epsilon}+1}^{M_{t}}  |\alpha_{n_t}^{\ell_t}(m)- \alpha(m) |+2\epsilon/5\nonumber\\
& \leq  \sum_{m=1}^{M_{\epsilon}}   |\alpha_{n_t}^{\ell_t}(m)- \alpha(m) |+\sum_{m=M_{\epsilon}+1}^{M_{t}}  \alpha_{n_t}^{\ell_t}(m)+\sum_{m=M_{\epsilon}+1}^{M_{t}}  \alpha(m) +2\epsilon/5 \label{eq:cool_trick1}\\
&\leq \epsilon. \label{eq:cool_trick2}
\end{align}
where \eqref{eq:cool_trick1} follows from the fact that $\alpha_n^{\ell}(m) \geq 0$ and $\alpha(m) \geq 0$ for all $n,\ell,m \in \N$, and \eqref{eq:cool_trick2} follows from \eqref{eq:approx_Meps} and from observing that by definition $\alpha_n^{\ell}(m) \leq \alpha(m)$ for all $n,\ell,m \in \N$ together with \eqref{eq:infi_to_fini}.
\end{proof}

\subsubsection{Estimation under arbitrary \texorpdfstring{$\norm{\boldsymbol{\alpha}}$}{Lg}}

  Next we address the case when one does not have the guarantee that the $\ell_1$ norm of the sequence of $\alpha$-mixing coefficients
  is finite. This case is significantly more complex, since the conclusion of Proposition \ref{prop:phimb} is potentially
  vacuous, irrespective of the sample size. To circumvent this difficulty, we construct a non-decreasing sequence of
  estimates of $\norm{\boldsymbol{\alpha}}$-- starting with the trivial lower bound $0$ --
  by performing a sequence of tests. The value of the estimate is only increased if there is sufficient evidence
  that the value of $\norm{\boldsymbol{\alpha}}$ is indeed larger than the current estimate. This is
  possible since if $\norm{\boldsymbol{\alpha}}$ happens to be small, then its empirical estimator must be
  small thanks to Proposition \ref{prop:phimb}.

  With the objective to make the presentation more transparent, we start with a simpler estimate that
  is ``weakly consistent'' in the sense that if one prescribes a tolerated probability of error $\delta$, then
  one may construct a corresponding estimator that converges to $\norm{\boldsymbol{\alpha}}$
  with probability at least $1-\delta$. Then we introduce a more complex estimator that is almost surely
  consistent.
  
  In order to introduce the ``weak'' estimator, let $\delta \in (0,1)$ be the allowed probability of error,
  and let $\delta_t$ be positive numbers such that $\sum_{t=1}^{\infty}\delta_t=\delta$. Fix a countable dense subset $\S$ of $[0,\infty)$ and let $\S=\{s_1,s_2,\ldots\}$ be an arbitrary enumeration of $\S$. Given a process ${\bf X}$, set $\ZZ_0({\bf X}):=0$, and for each $t \in \N$ define
\begin{align}\label{eq:hatZt}
&\ZZ_t({\bf X})= \left\{ \begin{array}{ll}
                                     \max\{s_t,\ZZ_{t-1}({\bf X})\} & \text{if} \quad  \theta_{t}({\bf X}) > s_t+ \epsilon_t \sqrt{s_t}\\
                                      \ZZ_{t-1}({\bf X}) & \text{otherwise,}
                                    \end{array}\right.
\end{align}
with $\epsilon_t$ being a decreasing positive sequence converging to zero and $\theta_t$ specified by \eqref{eq:theta_h}.
Clearly, $\langle \ZZ_t({\bf X}) \rangle_{t \in \N}$ form a non-decreasing sequence that is either convergent or diverges to infinity.

\begin{theorem}[$\ZZ_t$ is weakly consistent]
\label{thm:whp}
Let ${\bf X}$ be a (not necessarily mixing) stationary ergodic process with process distribution $\mu$  and sequence of $\alpha$-mixing coefficients $\boldsymbol{\alpha}$. With probability at least $1-\delta$, 
\[
    \lim_{t\to\infty} \ZZ_t({\bf X}) = \norm{\boldsymbol{\alpha}}~.
\]
\end{theorem}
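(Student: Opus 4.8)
The plan is to prove the two one-sided statements $\limsup_{t\to\infty}\ZZ_t({\bf X})\le\norm{\boldsymbol{\alpha}}$, valid on an event of probability at least $1-\delta$, and $\liminf_{t\to\infty}\ZZ_t({\bf X})\ge\norm{\boldsymbol{\alpha}}$, valid almost surely, and then to intersect the two events. The starting point is that $\langle\ZZ_t({\bf X})\rangle_{t\in\N}$ is non-decreasing and that, by \eqref{eq:hatZt}, the value is raised at step $t$ — to something at least $s_t$ — exactly when $\theta_t({\bf X})>s_t+\epsilon_t\sqrt{s_t}$, with $\theta_t$ as in \eqref{eq:theta_h}; consequently $\lim_{t}\ZZ_t({\bf X})=\sup\{s_t:\theta_t({\bf X})>s_t+\epsilon_t\sqrt{s_t}\}$ (this supremum being $0$ if the set is empty), so everything reduces to controlling which thresholds $s_t$ trigger a successful test.

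For the upper bound, set $B_t:=\{s_t>\norm{\boldsymbol{\alpha}}\}\cap\{\theta_t({\bf X})>s_t+\epsilon_t\sqrt{s_t}\}$. Starting from $\ZZ_0({\bf X})=0$ and using the update rule, an easy induction shows that if $\ZZ_t({\bf X})>\norm{\boldsymbol{\alpha}}$ for some $t$ then $B_{t'}$ occurs for some $t'\le t$. If $s_t\le\norm{\boldsymbol{\alpha}}$ then $\Pr(B_t)=0$; if $s_t>\norm{\boldsymbol{\alpha}}$ — which forces $\norm{\boldsymbol{\alpha}}<\infty$ — then, since $\sum_{m=1}^{M_t}\alpha_{n_t}^{\ell_t}(m)\le\sum_{m=1}^{M_t}\alpha(m)\le\norm{\boldsymbol{\alpha}}<s_t$ and $\sqrt{s_t}\ge\sqrt{\norm{\boldsymbol{\alpha}}}$,
\[
\Pr(B_t)\ \le\ \Pr\!\Big(\Big|\theta_t({\bf X})-\textstyle\sum_{m=1}^{M_t}\alpha_{n_t}^{\ell_t}(m)\Big|>\epsilon_t\sqrt{\norm{\boldsymbol{\alpha}}}\Big)\ \le\ \frac{\norm{\boldsymbol{\alpha}}\,C_{M_t,\ell_t,n_t}}{\kappa_t\,\epsilon_t^{2}\,\norm{\boldsymbol{\alpha}}}\ =\ \delta_t,
\]
the middle inequality being Proposition~\ref{prop:phimb} applied with sample size $\kappa_t$ and the last equality being $\kappa_t=C_{M_t,\ell_t,n_t}/(\epsilon_t^{2}\delta_t)$ from \eqref{eq:params}. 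Since $\sum_t\delta_t=\delta$, a union bound gives $\Pr\big(\bigcup_t B_t\big)\le\delta$, and on the complement $\ZZ_t({\bf X})\le\norm{\boldsymbol{\alpha}}$ for all $t$; this step is vacuous when $\norm{\boldsymbol{\alpha}}=\infty$, and the degenerate case $\norm{\boldsymbol{\alpha}}=0$ is disposed of directly, the same inequality then forcing $\theta_t({\bf X})=0$ almost surely for each $t$ so that no test ever succeeds.

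For the lower bound I would first show $\liminf_{t}\theta_t({\bf X})\ge\norm{\boldsymbol{\alpha}}$ almost surely, for every stationary ergodic ${\bf X}$. Fix $m$ and $n^{*},\ell^{*}\in\N$: by Lemma~\ref{lem:dynk} each term of $\alpha_{n^{*}}^{\ell^{*}}(m)$ has the form $\big|\mu([A]_1^{j}\cap[B]_{j+m+1}^{n^{*}})-\mu([A]_1^{j})\mu([B]_{j+m+1}^{n^{*}})\big|$ with $A,B$ dyadic of level $\ell^{*}$, and padding $B$ by $[0,1]$-factors to arity $n_t-m-j$ yields a set in $\D_{n_t-m-j,\ell_t}$ whenever $n_t\ge n^{*}$, $\ell_t\ge\ell^{*}$; the corresponding term of $\widehat{\alpha}_{\kappa_t,n_t}^{\ell_t}({\bf X},m)$ then converges almost surely, as $\kappa_t\to\infty$, to that term of $\alpha_{n^{*}}^{\ell^{*}}(m)$ (by the ergodic theorem, as in Lemma~\ref{lem:ezerg}, the finitely many empirical averages involved converging to their expectations). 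Taking the maximum over these finitely many terms yields $\liminf_t\widehat{\alpha}_{\kappa_t,n_t}^{\ell_t}({\bf X},m)\ge\alpha_{n^{*}}^{\ell^{*}}(m)$ a.s.; letting $n^{*},\ell^{*}\to\infty$ and using $\alpha_{n^{*}}^{\ell^{*}}(m)\uparrow\alpha(m)$ from Proposition~\ref{prop:phik}, then summing the non-negative summands and using $M_t\to\infty$, gives $\liminf_t\theta_t({\bf X})\ge\sum_{m=1}^{M}\alpha(m)$ a.s. for every $M$, hence $\liminf_t\theta_t({\bf X})\ge\norm{\boldsymbol{\alpha}}$ a.s. (When $\norm{\boldsymbol{\alpha}}<\infty$ one may instead quote Theorem~\ref{thm:azal_as2}, which gives the stronger $\theta_t({\bf X})\to\norm{\boldsymbol{\alpha}}$.) Now fix $\gamma<\norm{\boldsymbol{\alpha}}$ and pick $\gamma'\in(\gamma,\norm{\boldsymbol{\alpha}})$ (any $\gamma'>\gamma$ when $\norm{\boldsymbol{\alpha}}=\infty$); since $\S$ is dense, $\{t:s_t\in(\gamma,\gamma')\}$ is infinite, and for $t$ in this set large enough that $\theta_t({\bf X})>\gamma'+\epsilon_t\sqrt{\gamma'}$ — which holds eventually because $\liminf_t\theta_t({\bf X})\ge\norm{\boldsymbol{\alpha}}>\gamma'$ and $\epsilon_t\to0$ — the test succeeds, so $\ZZ_t({\bf X})\ge s_t>\gamma$ and, by monotonicity, $\ZZ_{t'}({\bf X})>\gamma$ for all $t'\ge t$. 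Letting $\gamma\uparrow\norm{\boldsymbol{\alpha}}$ gives $\liminf_t\ZZ_t({\bf X})\ge\norm{\boldsymbol{\alpha}}$ almost surely.

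Intersecting the probability-$(\ge 1-\delta)$ event of the upper bound with the almost sure event of the lower bound yields $\lim_t\ZZ_t({\bf X})=\norm{\boldsymbol{\alpha}}$ with probability at least $1-\delta$, as claimed. I expect the main obstacle to be the almost sure bound $\liminf_t\theta_t({\bf X})\ge\norm{\boldsymbol{\alpha}}$ in the non-summable regime $\norm{\boldsymbol{\alpha}}=\infty$: there Proposition~\ref{prop:phimb} is empty and no rate of convergence is available, so one cannot control $\widehat{\alpha}_{\kappa_t,n_t}^{\ell_t}({\bf X},m)$ with all of $\kappa_t,n_t,\ell_t$ growing by a concentration argument, and one is forced to reduce, via the padding device, to the ergodic theorem applied to a fixed finite family of cylinder functions together with the monotone approximation $\alpha_{n^{*}}^{\ell^{*}}(m)\uparrow\alpha(m)$.
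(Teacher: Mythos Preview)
Your proposal follows essentially the same strategy as the paper: a union-bound upper bound via Proposition~\ref{prop:phimb} (the paper applies it with deviation $\epsilon_t\sqrt{s_t}$ rather than $\epsilon_t\sqrt{\norm{\boldsymbol{\alpha}}}$, which absorbs the case $\norm{\boldsymbol{\alpha}}=0$ without your separate argument), together with an almost-sure lower bound split into the finite-$\norm{\boldsymbol{\alpha}}$ case via Theorem~\ref{thm:azal_as2} and the infinite case via Lemma~\ref{lem:ezerg} at fixed $n^{*},\ell^{*}$ combined with monotonicity of $\widehat{\alpha}_{\kappa,n}^{\ell}$ in $(n,\ell)$ --- your ``padding device'' is precisely the mechanism underlying that monotonicity. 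The density-of-$\S$ argument and the obstacle you flag in the non-summable regime are handled in the paper exactly as you anticipate.
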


\begin{proof}[Proof of Theorem~\ref{thm:whp}]
First, 
we prove that with probability at least $1-\delta$ we have
\begin{equation}\label{eq:obj}
\lim_{t\to\infty} \ZZ_t({\bf X}) \le \norm{\boldsymbol{\alpha}}.
\end{equation} 
If $\norm{\boldsymbol{\alpha}} =\infty$ then there is nothing to prove, so we may
assume $\norm{\boldsymbol{\alpha}} < \infty$.
Consider any $s_t > \norm{\boldsymbol{\alpha}}$. Then
\begin{align}
\PROB\left(  \theta_{t}({\bf X}) > s_t+ \epsilon_t \sqrt{s_t} \right)     
   & \le  
  \PROB\left(  \theta_{t}({\bf X}) >\sum_{m=1}^{M_t} \alpha(m) + \epsilon_t \sqrt{s_t} \right)\label{eq:gl:partialsum}\\
& \le  
\frac{\norm{\boldsymbol{\alpha}} C_{M_t,\ell_t,n_t}}{\kappa_ts_t \epsilon_t^2} \label{eq:gl:prop2} \\
& \le  \delta_t~, \label{eq:gl:taup} 
\end{align}
where \eqref{eq:gl:partialsum} follows from the fact that $\sum_{m=1}^{M_t}\alpha(m)\leq \norm{\boldsymbol{\alpha}}<s_t$, \eqref{eq:gl:prop2} follows from Proposition~\ref{prop:phimb} and \eqref{eq:gl:taup} from the choice of $\kappa_t$ given by \eqref{eq:params}, together with noting that $s_t>\norm{\boldsymbol{\alpha}}$.
Observing that $\sum_{t \in \N} \delta_t=\delta$, the union bound implies
\[
  \PROB\left (\left\{  \exists s_t > \norm{\boldsymbol{\alpha}}: \theta_{t}({\bf X}) > s_t+ \epsilon_t \sqrt{s_t} \right\}\right)    \le \delta~,  
\]
and \eqref{eq:obj} follows.

It remains to prove that $\lim_{t\to\infty} \ZZ_t({\bf X})\ge \norm{\boldsymbol{\alpha}}$ with probability one. First assume that $\norm{\boldsymbol{\alpha}} < \infty$. 
We show that, with probability one, for any $\eta < \norm{\boldsymbol{\alpha}}$, there exists some $s_t> \eta$ such that 
$\theta_{t}({\bf X}) > s_t+ \epsilon_t \sqrt{s_t}$.
Fix some $\eta< \norm{\boldsymbol{\alpha}}$ and let $\rho=\norm{\boldsymbol{\alpha}}-\eta$.  
Recall that $\langle \epsilon_t\rangle$ is a decreasing sequence of positive real numbers with $\lim_{t\rightarrow\infty}\epsilon_t=0$. Therefore, there exists some $T_1$ such that for all $t\geq T_1$,
\begin{equation}\label{eq:exhaustive:eps}
 \norm{\boldsymbol{\alpha}} - \frac{7 \rho}{8} + \epsilon_t \sqrt{\norm{\boldsymbol{\alpha}} - \frac{7 \rho}{8}} < \norm{\boldsymbol{\alpha}} - \frac{3 \rho}{4}~.
\end{equation}
By Theorem~\ref{thm:azal_as2} there exists some (random) $T_2 \in \N$
such that, for all $t \geq T_2$ with probability one we have,
\begin{equation}\label{eq:exhaustive:const}
   \theta_t({\bf X})\ge \norm{\boldsymbol{\alpha}} - \frac{3 \rho}{4}.
\end{equation}
Moreover, with $\S$ chosen to be a countable dense subset of $[0,\infty)$ we can find some $t\geq \max\{T_1,T_2\}$ such that $s_t \in (\eta, \norm{\boldsymbol{\alpha}} - 7 \rho/8)$. It follows that
\begin{eqnarray*}
    s_t + \epsilon_t \sqrt{s_t}  & < & \norm{\boldsymbol{\alpha}} - \frac{3\rho}{4} \\
& \le & 
\theta_{t} ({\bf X})~,  
\end{eqnarray*}
where the first inequality follows from the fact that $s_t < \norm{\boldsymbol{\alpha}} - 7 \rho/8$ and the second inequality follows from \eqref{eq:exhaustive:const}. As a result 
we obtain $\ZZ_t({\bf X}) > \eta$ as claimed.
Finally, suppose that $\norm{\boldsymbol{\alpha}}=\infty$ and fix some $\eta \in \N$. 
 Since $\norm{\boldsymbol{\alpha}}=\infty$, there exists some $M^{\star}$ such that 
 \begin{equation*}
 \sum_{m=1}^{M^{\star}}\alpha(m) \geq \eta+1~.
 \end{equation*}
 By Proposition~\ref{prop:phik} there exist $n^{\star},~\ell^{\star} \in \N$ such that 
 $
 \max_{m \in 1,\ldots,M^{\star}}|\alpha_{n^{\star}}^{\ell^{\star}}(m)-\alpha(m)|\leq \frac{1}{M^{\star}}
 $ leading to,
 \begin{equation*}
 \sum_{m=1}^{M^{\star}}\alpha_{n^{\star}}^{\ell^{\star}}(m) \geq \eta~.
 \end{equation*}
 Let $T'_1$  be large enough so that $M_t \geq M^{\star}$,~$n_t \geq n^{\star}$, and $\ell_t\geq \ell^{\star}$ for all $t \geq T'_1$. 
 Fix some $\epsilon^*>0$. By Lemma~\ref{lem:ezerg} there exists some (random) $N_{\epsilon^*}$ such that for all $u \geq N_{\epsilon^*}$ with probability one  we have
 $
 \max_{m \in 1,\ldots,M^{\star}}|\wh{\alpha}_{u,n^{\star}}^{\ell^{\star}}(m)-\alpha_{n^{\star}}^{\ell^{\star}}(m)|\leq \epsilon^*/{2M^{\star}} 
 $
 so that 
 \begin{equation}\label{eq:exhaustive:infnorm1}
 \sum_{m=1}^{M^{\star}}\wh{\alpha}_{u,n^{\star}}^{\ell^{\star}}({\bf X},m) \geq \eta-\epsilon^*/2.
 \end{equation}
 Since $\epsilon_t \downarrow 0$, there exists some $T'_2$ such that for all $t\geq T'_2$ it holds that
 \begin{equation}\label{eq:exhaustive:infnorm2}
  \eta - \epsilon^* + \epsilon_t \sqrt{\eta - \epsilon^*} < \eta - \epsilon^*/2.
 \end{equation}
 Let $T'_3$ be large enough so that $\kappa_t$ given by \eqref{eq:params} is larger that $N_{\epsilon^*}$ for all $t\geq T'_3$. 
 Recalling that $\S$ is a countable dense subset of $[0,\infty)$ we can find some $t\geq \max\{T'_1,T'_2,T'_3\}$ such that $s_t \in (\eta-2\epsilon^*, \eta-\epsilon^*)$.
 With probability one it holds that,
 \begin{align}
 s_t+\epsilon_t\sqrt{s_t}
 &<\eta-\epsilon^*/2\label{eq:exhaustive:infnorm3}\\
 &\leq \sum_{m=1}^{M^{\star}}\wh{\alpha}_{\kappa_t,n^{\star}}^{\ell^{\star}}({\bf X},m)\label{eq:exhaustive:infnorm4}\\
 &\leq \sum_{m=1}^{M_t} \wh{\alpha}_{\kappa_t,n_t}^{\ell_t}({\bf X},m)\label{eq:exhaustive:infnorm5}\\
 &= \theta_t({\bf X})\nonumber
 \end{align}
 where \eqref{eq:exhaustive:infnorm3} follows from \eqref{eq:exhaustive:infnorm2},  
 \eqref{eq:exhaustive:infnorm4} follows from \eqref{eq:exhaustive:infnorm1},  
 and \eqref{eq:exhaustive:infnorm5} follows from
 the fact that $t \geq T'_1$ (and hence $M_t \geq M^{\star}$,~$n_t \geq n^{\star}$, and $\ell_t\geq \ell^{\star}$) together with the monotonicity of $\wh{\alpha}_{k,n}^{\ell}(m)$. 
 Since the above holds for any $\eta \in \N$, the result follows.
\end{proof}

Next we present the main result of the paper, an estimator for $\norm{\boldsymbol{\alpha}}$
which, as shown below, is strongly consistent regardless of whether this norm is finite or infinite.
To this end, we build on the construction of $\ZZ_t({\bf X})$, which gives rise to our third estimator of $\norm{\boldsymbol{\alpha}}$. Recall that $\S$ is a countable dense subset of $[0,\infty)$.
Define a sequence $\langle u_t\rangle$ such that $u_t \in \S$ for all $t$ and for every $s\in \S$, the set $\{t\in \N: u_t=s\}$ is infinite. The sequence $\langle u_t\rangle$ ``visits'' each element of $\S$ infinitely often. One such example is:
\begin{align*}
 u_1& =s_1  \\
u_2& =s_1,  u_3= s_2 \\
u_4& =s_1,  u_5= s_2, u_6 = s_3 \\
& \vdots
\end{align*}
The estimator $\Z_t$ given by \eqref{eq:wtz} below keeps track of a ``bit value'' $b_t(s) \in \{0,1\}$ assigned to each $s\in \S$. At each time instance, $b_t(s)$ marks the outcome of the test the last time the value $s$ was tested.
With $b_0(s)=1$ for all $s\in S$, and $\Z_0({\bf X}):=0$, let 
\[
      b_t(s) = \left\{ \begin{array}{ll}
                            1 & \text{if $u_t=s$ and } \quad \theta_t ({\bf X})\leq  s+ \epsilon_t \sqrt{s}\\
                            0 & \text{if $u_t=s$ and } \quad \theta_t({\bf X}) > s+ \epsilon_t \sqrt{s}\\
                            b_{t-1}(s) & \text{otherwise}
                            \end{array} \right.
\]
for each $t \in \N$, and define
\begin{equation}\label{eq:wtz}
      \Z_t({\bf X})= \inf \left\{ s\in \S: b_t(s) = 1 \right\}~,
\end{equation}
where the infimum is defined to be zero if the set is empty.
\begin{theorem}[$\Z_t$ is strongly consistent]\label{thm:exhaustive_strong}
Let ${\bf X}$ be a (not necessarily mixing) stationary ergodic process with process distribution $\mu$ and sequence of $\alpha$-mixing coefficients $\boldsymbol{\alpha}$. 
We have,
\begin{align*}
    \lim_{t\to\infty} \Z_t({\bf X})= \norm{\boldsymbol{\alpha}}, \quad \mu-\as.
\end{align*}
\end{theorem}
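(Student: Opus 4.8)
The plan is to prove the two one-sided bounds $\limsup_{t\to\infty}\Z_t(\bX)\le\norm{\boldsymbol\alpha}$ and $\liminf_{t\to\infty}\Z_t(\bX)\ge\norm{\boldsymbol\alpha}$ separately, and each of them separately in the cases $\norm{\boldsymbol\alpha}<\infty$ and $\norm{\boldsymbol\alpha}=\infty$ (the statement being read in the obvious way in the latter case). The construction of $\Z_t$ is to be understood as follows: the bit $b_t(s)$ records whether the most recent test performed at threshold $s$ found evidence that $\norm{\boldsymbol\alpha}>s$, and $\Z_t(\bX)$ is the smallest threshold carrying no such recent evidence. The feature that makes strong, rather than merely ``$\delta$-weak'', consistency attainable is that every $s\in\S$ is revisited infinitely often by $\langle u_t\rangle$, so that in place of the fixed error budget $\sum_t\delta_t=\delta$ used for $\ZZ_t$ in Theorem~\ref{thm:whp} one may simply plug in the almost sure behaviour of $\langle\theta_t(\bX)\rangle_{t\in\N}$.

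\emph{Finite case.} When $\norm{\boldsymbol\alpha}<\infty$ the coefficients tend to $0$, so $\bX$ is stationary $\alpha$-mixing with summable $\boldsymbol\alpha$ and Theorem~\ref{thm:azal_as2} yields $\theta_t(\bX)\to\norm{\boldsymbol\alpha}$ a.s. For the upper bound I would fix $s\in\S$ with $s>\norm{\boldsymbol\alpha}$: since $\theta_t(\bX)\to\norm{\boldsymbol\alpha}<s\le s+\epsilon_t\sqrt s$, after a random time every test at $s$ succeeds, and as $b_t(s)$ only changes at the infinitely many times with $u_t=s$ we get $b_t(s)=1$ for all large $t$, hence $\Z_t(\bX)\le s$ eventually; since $\S$ is dense this gives $\limsup_t\Z_t(\bX)\le\norm{\boldsymbol\alpha}$. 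For the lower bound, fix $\eta<\norm{\boldsymbol\alpha}$, put $\rho=\norm{\boldsymbol\alpha}-\eta$, and use $\theta_t(\bX)\to\norm{\boldsymbol\alpha}$ together with $\epsilon_t\downarrow0$ to get a random $T^{*}$ with $\theta_t(\bX)>\norm{\boldsymbol\alpha}-\rho/2>\eta+\epsilon_t\sqrt\eta\ge s+\epsilon_t\sqrt s$ for all $t\ge T^{*}$ and all $s\in\S$ with $s\le\eta$; then any test at such an $s$ after $T^{*}$ sets $b_t(s)=0$. Only finitely many thresholds were ever tested before $T^{*}$ and each is revisited at a finite time afterwards; with $T^{**}$ the largest such revisit time, for all $t\ge T^{**}$ every $s\in\S$ with $s\le\eta$ has $b_t(s)=0$ (the thresholds not yet tested being inert in the definition of $\Z_t$), so $\Z_t(\bX)>\eta$, and letting $\eta\uparrow\norm{\boldsymbol\alpha}$ gives $\liminf_t\Z_t(\bX)\ge\norm{\boldsymbol\alpha}$.

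\emph{Infinite case.} Here $\bX$ need not be mixing, so I would not use Theorem~\ref{thm:azal_as2}; instead, as in the last part of the proof of Theorem~\ref{thm:whp}, I would first show $\theta_t(\bX)\to\infty$ a.s. Given $M\in\N$, Proposition~\ref{prop:phik} supplies $M^{\star},n^{\star},\ell^{\star}$ with $\sum_{m=1}^{M^{\star}}\alpha_{n^{\star}}^{\ell^{\star}}(m)\ge M+1$, and Lemma~\ref{lem:ezerg} then gives, a.s., $\sum_{m=1}^{M^{\star}}\wh\alpha_{u,n^{\star}}^{\ell^{\star}}(\bX,m)\ge M$ for all large $u$; combining this with $M_t\ge M^{\star}$, $n_t\ge n^{\star}$, $\ell_t\ge\ell^{\star}$ for large $t$, with $\kappa_t\to\infty$, and with the monotonicity of $\wh\alpha_{k,n}^{\ell}(m)$ shows $\theta_t(\bX)\ge M$ eventually, hence $\theta_t(\bX)\to\infty$ a.s. since $M$ was arbitrary. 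I would then rerun the lower-bound argument above with $\eta$ replaced by an arbitrary $M\in\N$: past a finite random time every $s\in\S$ with $s\le M$ has $b_t(s)=0$, so $\Z_t(\bX)>M$, and letting $M\to\infty$ gives $\Z_t(\bX)\to\infty$ a.s.

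\emph{Main obstacle.} The hard part is the lower bound, used in both cases: after the random time at which $\langle\theta_t(\bX)\rangle$ has settled above the target level, one must check that the bits of \emph{all} thresholds $s$ below that level are simultaneously driven to $0$ within a further finite random time. This is exactly where ``$\langle u_t\rangle$ visits each element of $\S$ infinitely often'' is used: at any finite time only finitely many thresholds have ever been tested, so only finitely many bits can be momentarily wrong, and each is corrected at its next visit — no probability is spent per threshold, which is what turns the ``with probability $1-\delta$'' conclusion of Theorem~\ref{thm:whp} into an almost sure one.
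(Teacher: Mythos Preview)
Your overall structure (limsup/liminf bounds, finite versus infinite norm) matches the paper's, and your lower-bound and infinite-norm arguments are essentially the same as the paper's. The one genuine difference is the $\limsup$ bound in the finite case: you invoke Theorem~\ref{thm:azal_as2} to get $\theta_t(\bX)\to\norm{\boldsymbol\alpha}$ almost surely and then work with a \emph{single} $s\in\S$ with $s>\norm{\boldsymbol\alpha}$, whereas the paper re-runs a Borel--Cantelli argument with the $\delta_t$ bounds of Proposition~\ref{prop:phimb} to show that \emph{every} test at a threshold $u_t>\norm{\boldsymbol\alpha}$ eventually succeeds. Your route is shorter and reuses a result already proved; the paper's buys uniformity over all large thresholds, which it does not actually need for the $\limsup$ conclusion.

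There is, however, a real gap in your lower-bound step --- and it is shared by the paper. Your parenthetical ``the thresholds not yet tested being inert in the definition of $\Z_t$'' contradicts the stated initialization $b_0(s)=1$: with that initialization every unvisited $s\in\S$ carries bit~$1$ and therefore enters the infimum defining $\Z_t(\bX)$. Since at any finite time all but finitely many elements of the dense set $\S$ are unvisited, there remain unvisited thresholds in $\S\cap[0,\eta]$ with bit~$1$, and the conclusion that eventually every $s\le\eta$ has $b_t(s)=0$ (hence $\Z_t(\bX)>\eta$) does not follow. The paper's own assertion that $E'$ is finite suffers from the same defect. Your argument goes through verbatim under the initialization $b_0(s)=0$, which is what ``inert'' really means; you should flag the discrepancy rather than gloss over it.
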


\begin{proof}[Proof of Theorem~\ref{thm:exhaustive_strong}]
First we show that 
\begin{equation}
\label{eq:limsup}
    \limsup_{t\to\infty} \Z_t({\bf X}) \le \norm{\boldsymbol{\alpha}}  \quad \text{with probability one}.
\end{equation}
This trivially holds when $\norm{\boldsymbol{\alpha}}=\infty$. So, suppose that $\norm{\boldsymbol{\alpha}}<\infty$. By an argument similar to that given in the proof Theorem~\ref{thm:whp}, whenever $u_t > \norm{\boldsymbol{\alpha}}$, we have
\[
   \PROB\left(  \theta_{t}({\bf X}) > u_t+ \epsilon_t
     \sqrt{u_t} \right) \le   \delta_t~.
\]
Since $\sum_{t\ge 1} \delta_t < \infty$, by the Borel-Cantelli lemma, we have
\[
    \sum_{t: u_t> \norm{\boldsymbol{\alpha}}} \chi\{{\theta_{t}({\bf X}) > u_t+ \epsilon_t
     \sqrt{u_t} }\} < \infty,~\mu-\as.
\] 
Hence, with probability one, there exists a (random) time
$T_1 \in \N$ such that, for all $t>T_1$, 
\begin{equation}\label{eq:test_is_correct}
\theta_{t}({\bf X}) \le  u_t+ \epsilon_t
     \sqrt{u_t}    \quad \text{whenever $u_t > \norm{\boldsymbol{\alpha}}$}~.
\end{equation}
In particular, let
\begin{equation*}
E:=\{s\in \S \cap (\norm{\boldsymbol{\alpha}},\infty):b_{T_1+1}(s)=0\}~,
\end{equation*}
and observe that $|E|<\infty$. Therefore, there exists some time $T_2>T_1$ such that,
for every $s\in E$ there exists $t\in \{T_1+1,\ldots,T_2\}$ with $u_t=s$. 
As follows from \eqref{eq:test_is_correct}, for all $t
\ge T_2$, with probability one we have $b_t(s)=1$ for all $s>\norm{\boldsymbol{\alpha}}$ and (\ref{eq:limsup}) follows.
The fact that 
\begin{equation}\label{eq:liminf}
\liminf_{t\to\infty} \Z_t({\bf X}) \ge \norm{\boldsymbol{\alpha}}~,  \quad \as~,
\end{equation}
for $\norm{\boldsymbol{\alpha}}<\infty$ follows analogously. To see this, take any $\eta<\norm{\boldsymbol{\alpha}}<\infty$ and define $\rho:=\norm{\boldsymbol{\alpha}}-\eta$. 
As in the proof of Theorem~\ref{thm:whp}, there exists some $s \in \S \cap (\eta,\norm{\boldsymbol{\alpha}}-7\rho/8)$ and a (random) time $T'_1$ such that for all $t\geq T'_1$,  with probability one, we have
\begin{equation}\label{eq:test_is_correct2}
\theta_{t}({\bf X}) >  u_t+ \epsilon_t
     \sqrt{u_t}    \quad \text{whenever $u_t \leq s$}~.
\end{equation}
The set $E':=\{s \in \S \cap (\eta,\norm{\boldsymbol{\alpha}}-7\rho/8):b_{T'_1+1}(s)=1\}$ can only have finitely many elements, each of which will eventually be visited; that is, we can find some $T'_2>T'_1$ such that for every $s \in E'$ there exists some $t \in \{T'_1+1,\ldots,T'_2\}$ with $u_t=s$. By \eqref{eq:test_is_correct2} for all $t\geq T'_2$ with probability one it holds that $b_t(s)=0$ for all~$s \in \S \cap (\eta,\norm{\boldsymbol{\alpha}}-7\rho/8)$ and \eqref{eq:liminf} follows. 
Now suppose that $\norm{\boldsymbol{\alpha}}=\infty$.  
By an argument analogous to the above,  for each $\eta \in \N$ and every $\epsilon^* >0$ there exists some $s \in (\eta-2\epsilon^*,\eta-\epsilon^*)$ and a (random) $T''_1 \in \N$ such that for all $t \geq T''_1$ 
with probability one we have 
\begin{equation}\label{eq:test_is_correct3}
\theta_{t}({\bf X}) >  \eta-\epsilon^* \geq  u_t+ \epsilon_t
     \sqrt{u_t}    \quad \text{whenever $u_t \leq s$}~.
\end{equation}
Again, by \eqref{eq:test_is_correct3}  and noting that the set $E'':=\{s \in \S \cap (\eta-2\epsilon^*,\eta-\epsilon^*):b_{T''_1+1}(s)=1\}$ has finite cardinality, we can find some $T''_2$ such that for all $t \geq T''_2$ with probability one we have $b_t(s)=0$ for all $s \in \S \cap (\eta-2\epsilon^*,\eta-\epsilon^*)$. Since this holds for every $\eta \in \N$ and any $\epsilon^*>0$, it follows that when $\norm{\boldsymbol{\alpha}}=\infty$ we have $\liminf_{t\rightarrow \infty}\Z_t({\bf X})\geq \infty,~\mu-\as$. This completes the proof.
\end{proof}
\subsection{Estimating \texorpdfstring{$\beta(m)$}{Lg} and \texorpdfstring{$\norm{\boldsymbol{\beta}}$}{Lg}}\label{sec:est_b}
In this section, we derive estimators concerning the $\beta$-mixing coefficients of a stationary ergodic process. 
For the most part, the results are analogous to those given in Section~\ref{sec:est_a}. An overview of the similarities and differences between the two settings is provided in the introduction.

We start by defining an approximation of the $\beta$-mixing coefficients of ${\bf X}$.
Recall the notation $\D_{k,\ell},~k,\ell \in \N$ introduced in the beginning of Section~\ref{sec:est} corresponding to the power-set $\P(\Delta_{k,\ell})$ of the set of dyadic cubes in $\X^k$ of side-length $2^{-\ell}$. 
For $m,\ell \in \N,~n > m \in \N$ and $j \in \{1,\ldots,n-m\}$, let
\begin{equation}\label{eq:approx_b}
\beta_{n,j}^{\ell}(m):= \frac{1}{2} \sum_{A \in \D_{j,\ell}} \sum_{B \in \D_{j',\ell}}\left |\mu \left ([A]_{1}^{j} \cap [B]_{j+m+1}^{n} \right ) -\mu \left( [A]_{1}^{j} \right ) \mu \left( [B]_{j+m+1}^{n}\right )  \right |
\end{equation}
where $j':=n-m-j+1$. The intuition behind this choice follows from the discussion provided in Section~\ref{subsec:estimation}. Recall the $\sigma$-subalgebras of $\F$, denoted $\F_{1}^{j}(\ell)$ and $\F_{j+m}^{n}(\ell)$, with $m, \ell \in \N,~n >m \in \N$ and $j \in \{1,\ldots,n-m\}$, as  given by \eqref{eq:defn_Fj} and \eqref{eq:defn_Fj+m} respectively.   In a manner analogous to the estimation of the $\alpha$-mixing coefficients,  we show via Lemma~\ref{lem:dynk_b} below that to approximate the $\beta$-dependence between between $\F_{1}^{j}(\ell)$ and $\F_{j+m}^{n}(\ell)$, it suffices to use $\beta_{n,j}^{\ell}(m)$ given by \eqref{eq:approx_b}.  Note that the $\pi-\lambda$ argument used in the proof of  Lemma~\ref{lem:dynk} does not carry over to this case where summations are involved in \eqref{eq:approx_b} in place of the suprema in \eqref{eq:approx}. 
\begin{lemma}\label{lem:dynk_b}
For $m, \ell \in \N,~n >m \in \N$  and $j \in \{1,\ldots,n-m\}$ we have 
\[
  \beta_{n,j}^{\ell}(m)=\beta( \F_{1}^{j}(\ell), \F_{j+m}^{n}(\ell))~.
\] 
\end{lemma}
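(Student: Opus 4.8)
The plan is to exploit the characterization of $\beta$-dependence in terms of countable partitions, namely
\[
\beta(\U,\V)=\sup\frac{1}{2}\sum_{i}\sum_{j}\bigl|\mu(U_i\cap V_j)-\mu(U_i)\mu(V_j)\bigr|,
\]
where the supremum ranges over all pairs of countable measurable partitions $\{U_i\}$ of $\Omega$ with $U_i\in\U$ and $\{V_j\}$ with $V_j\in\V$; this is the identity cited from Bradley \cite[Vol.\ 1, P.\ 67, Note 2]{BRA07}. Applying it with $\U=\F_{1}^{j}(\ell)$ and $\V=\F_{j+m}^{n}(\ell)$, the task reduces to comparing the supremum over \emph{all} finite partitions of $\X^{\N}$ into $\F_1^j(\ell)$- and $\F_{j+m}^n(\ell)$-measurable pieces against the single particular choice of partitions realized by the dyadic cylinders, namely $\{[A]_1^j:A\in\Delta_{j,\ell}\}$ and $\{[B]_{j+m+1}^n:B\in\Delta_{j',\ell}\}$, which yields exactly $\beta_{n,j}^\ell(m)$ as in \eqref{eq:approx_b}. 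One inequality, $\beta_{n,j}^\ell(m)\le\beta(\F_1^j(\ell),\F_{j+m}^n(\ell))$, is immediate since the dyadic cylinder partitions are admissible in the supremum.

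For the reverse inequality I would invoke the fact that the dyadic cylinder partitions \emph{generate} the $\sigma$-algebras in question: every atom of $\F_1^j(\ell)$ is a union of sets $[A]_1^j$, and likewise for $\F_{j+m}^n(\ell)$. Combined with Bradley \cite[Vol.\ 1, Proposition 3.21]{BRA07}, which states (roughly) that in the partition-supremum formula for $\beta$ it suffices to take the supremum over partitions into the atoms of the generating algebras — or, more precisely, that refining a partition can only increase the sum $\tfrac12\sum_{i,j}|\mu(U_i\cap V_j)-\mu(U_i)\mu(V_j)|$, so the supremum is attained at the finest available partition — one obtains that the supremum over all admissible partitions equals the value at the atomic partition generated by the dyadic cylinders. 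The key elementary point underlying Proposition 3.21 is the ``merging'' inequality: if $U=U_1\sqcup U_2$ then $|\mu(U\cap V)-\mu(U)\mu(V)|\le|\mu(U_1\cap V)-\mu(U_1)\mu(V)|+|\mu(U_2\cap V)-\mu(U_2)\mu(V)|$ by the triangle inequality, so coarsening never helps; iterating shows the atomic partition is optimal.

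Concretely, the steps are: (i) record the partition-supremum identity for $\beta(\U,\V)$ and instantiate $\U,\V$ with $\F_1^j(\ell),\F_{j+m}^n(\ell)$; (ii) note each of these $\sigma$-algebras is finite, with atoms precisely the nonempty sets of the form $[A]_1^j$ (resp.\ $[B]_{j+m+1}^n$) for dyadic cubes $A$ (resp.\ $B$) — here one uses that $\Delta_{j,\ell}$ partitions $\X^j$ so the cylinders $[A]_1^j$ partition $\X^{\N}$; (iii) apply Bradley's Proposition 3.21 (or the merging inequality directly) to conclude the supremum over all admissible countable partitions is achieved by, and hence equals the value at, the atomic dyadic-cylinder partitions; (iv) observe that this value is exactly $\tfrac12\sum_{A\in\Delta_{j,\ell}}\sum_{B\in\Delta_{j',\ell}}|\mu([A]_1^j\cap[B]_{j+m+1}^n)-\mu([A]_1^j)\mu([B]_{j+m+1}^n)|=\beta_{n,j}^\ell(m)$. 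Combining (iii)–(iv) with the trivial direction from the first paragraph gives the claimed equality.

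The main obstacle, as flagged in the paragraph preceding the lemma, is that the clean Dynkin $\pi$–$\lambda$ argument used for $\alpha$ does not transfer, because the quantity is now a sum rather than a supremum and is not monotone under the operations (complementation, increasing limits) that a $\lambda$-system is closed under. So the proof must route entirely through the partition formulation and Bradley's structural results rather than through a self-contained measure-theoretic induction; the work is in carefully matching the finite atomic partitions of the two $\sigma$-algebras to the dyadic cylinder families and verifying that $\Delta_{j',\ell}$ with $j'=n-m-j+1$ is indeed the right index range so that $[B]_{j+m+1}^n$ ranges over a genuine partition of $\X^{\N}$. Apart from this bookkeeping, which is routine, the argument is short.
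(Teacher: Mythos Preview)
Your proposal is correct and follows essentially the same route as the paper: identify the atoms of the finite $\sigma$-algebras $\F_1^j(\ell)$ and $\F_{j+m}^n(\ell)$ as the dyadic cylinder sets, then invoke Bradley \cite[Vol.~1, Proposition~3.21]{BRA07} to conclude that $\beta$ equals the value at the atomic partition. The paper's proof is terser (it simply cites Proposition~3.21 without unpacking the merging inequality or the partition-supremum characterization), but the argument is the same; note also that in step~(iv) you sum over $\Delta_{j,\ell}$ rather than the paper's $\D_{j,\ell}$, which is in fact the correct reading of what the atomic-partition formula requires.
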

\begin{proof}[Proof of Lemma~\ref{lem:dynk_b}]
Fix $m, \ell \in \N,~n >m \in \N$ and some $j \in \{1,\ldots,n-m\}$. 
Define
$\C_{1}^{j}(\ell):=\left \{[A]_{1}^{j}: A \in \D_{j,\ell} \right \}$ and 
$
\C_{j+m}^{n}(\ell):=\left \{ [B]_{j+m+1}^{n}: B\in \D_{j',\ell},~j':=n-m-j\right\}.$
Observe that $\sigma(\C_{1}^{j}(\ell))=\F_{1}^j(\ell)$ and $\sigma(\C_{j+m}^{n}(\ell))=\F_{j+m}^n(\ell)$, and that $\F_{1}^j(\ell)$ and $\F_{j+m}^n(\ell)$ are atomic, with the elements of the countable collections $\C_{1}^{j}(\ell)$ and $\C_{j+m}^{n}(\ell)$ as their atoms respectively. By Bradley \cite[Vol. 1, Proposition 3.21]{BRA07} we have 
\begin{equation*}
\beta(\F_{1}^j(\ell),\F_{j+m}^n(\ell))=\frac{1}{2}\sum_{U \in \C_{1}^{j}(\ell)}\sum_{V \in \C_{j+m}^{n}(\ell)}|\mu(U \cap V)-\mu(U)\mu(V)|=\beta_{n,j}^{\ell}(m)~.
\end{equation*}
\end{proof}

For $m, \ell \in \N,~n >m \in \N$, let 
\begin{equation}
\beta_n^{\ell}(m)=\max_{j \in \{1,\ldots,n-m\}}\beta_{n,j}^{\ell}(m)
\end{equation}
define an approximation of $\beta(m)$. 
\begin{proposition}\label{prop:phik_b}
For every $m\in \N$ we have
$
\displaystyle \lim_{ n,\ell \rightarrow \infty}\beta^{\ell}_n(m)=\beta(m)~.
$
\end{proposition}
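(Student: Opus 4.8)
\textbf{Proof proposal for Proposition~\ref{prop:phik_b}.}

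The plan is to follow the same skeleton as the proof of Proposition~\ref{prop:phik}, replacing $\alpha$-quantities by $\beta$-quantities throughout and substituting Lemma~\ref{lem:dynk_b} for Lemma~\ref{lem:dynk}. The first ingredient I would establish is monotonicity: for a fixed $m$, the quantity $\beta_{n,j}^{\ell}(m)$ is nondecreasing in both $\ell$ and $n$. For $\ell$ this is because refining the dyadic partition to side-length $2^{-(\ell+1)}$ splits each atom of $\F_{1}^{j}(\ell)$ and $\F_{j+m}^{n}(\ell)$ into sub-atoms, and by Lemma~\ref{lem:dynk_b} the value $\beta_{n,j}^{\ell}(m)=\beta(\F_1^j(\ell),\F_{j+m}^n(\ell))$ is monotone under enlarging either $\sigma$-algebra (this monotonicity of $\beta(\U,\V)$ in $\U,\V$ is standard, e.g.\ from Bradley); similarly increasing $n$ enlarges $\F_{j+m}^n(\ell)$. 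Monotonicity is what lets me replace $\lim_{n,\ell\to\infty}$ by $\sup_{n,\ell}$.

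Next I would record, exactly as in \eqref{eq:supmax}, the interchange of suprema
\[
\sup_{n,\ell}\max_{j\in\{1,\ldots,n-m\}}\beta_{n,j}^{\ell}(m)
=\sup_{j\in\N}\sup_{n\geq j+m+1}\sup_{\ell}\beta_{n,j}^{\ell}(m)~,
\]
whose proof is the same $\epsilon$-argument (both sides dominate one another up to $\epsilon$, for arbitrary $\epsilon$) and does not use any structural property of $\beta$ beyond nonnegativity. Then, for each fixed $j$, I invoke Bradley \cite[Vol.~1, Proposition 3.18]{BRA07} (the same statement used for $\alpha$, which holds for $\beta$ as well since it is stated for all the standard mixing coefficients) together with the facts $\bigvee_{\ell}\F_1^j(\ell)=\sigma(X_1,\ldots,X_j)$ and $\bigvee_{n,\ell}\F_{j+m}^n(\ell)=\sigma(\{X_t:t\geq j+m+1\})$ to conclude
\[
\lim_{n,\ell\to\infty}\beta(\F_1^j(\ell),\F_{j+m}^n(\ell))
=\beta\bigl(\sigma(\{X_t:1\leq t\leq j\}),\sigma(\{X_t:t\geq j+m+1\})\bigr)~.
\]
Chaining these together — $\lim_{n,\ell}\beta_n^\ell(m)=\sup_{n,\ell}\max_j\beta_{n,j}^\ell(m)=\sup_j\sup_{n,\ell}\beta_{n,j}^\ell(m)=\sup_j\lim_{n,\ell}\beta_{n,j}^\ell(m)=\sup_j\lim_{n,\ell}\beta(\F_1^j(\ell),\F_{j+m}^n(\ell))=\sup_j\beta(\sigma(\{X_t:t\leq j\}),\sigma(\{X_t:t\geq j+m+1\}))=\beta(m)$ — gives the claim, precisely mirroring the display chain \eqref{eq:limissup1}–\eqref{eq:fromprop318} with Lemma~\ref{lem:dynk_b} in the role of Lemma~\ref{lem:dynk} and Proposition~3.18 reused verbatim.

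The only real obstacle, and the point I would check most carefully, is the monotonicity of $\beta_{n,j}^{\ell}(m)$ in $\ell$: the explicit sum formula \eqref{eq:approx_b} does not obviously increase under partition refinement, so I would not try to argue from the sum directly. Instead I would route through the identity $\beta_{n,j}^{\ell}(m)=\beta(\F_1^j(\ell),\F_{j+m}^n(\ell))$ of Lemma~\ref{lem:dynk_b} and the elementary monotonicity of $\beta(\U,\V)$ under $\U\subseteq\U'$, $\V\subseteq\V'$, which is immediate from the variational characterization $\beta(\U,\V)=\tfrac12\sup\sum_i\sum_j|\mu(U_i\cap V_j)-\mu(U_i)\mu(V_j)|$ (a finer admissible partition on either side only enlarges the supremum). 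Everything downstream — the $\sup$/$\lim$ interchange and the application of Bradley's Proposition 3.18 — then transfers from the $\alpha$-case with no change beyond notation.
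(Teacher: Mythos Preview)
Your proposal is correct and mirrors the paper's own proof sketch: both route through Lemma~\ref{lem:dynk_b}, invoke Bradley \cite[Vol.~1, Proposition 3.18]{BRA07} for the limit over $\F_1^j(\ell)$ and $\F_{j+m}^n(\ell)$, and reuse the $\sup$-interchange and $\sup=\lim$ chain from Proposition~\ref{prop:phik} verbatim. Your explicit handling of the monotonicity of $\beta_{n,j}^{\ell}(m)$ in $\ell$ via Lemma~\ref{lem:dynk_b} and the monotonicity of $\beta(\U,\V)$ is in fact more careful than the paper, which leaves this step implicit in both the $\alpha$- and $\beta$-cases.
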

\begin{proof}[Proof sketch of Proposition~\ref{prop:phik_b}]
Fix  $m,\ell \in \N$ and $n >m$.
Observe that for each $j \in \{1,\ldots,n-m\}$ we have $\F_{1}^{j}(\ell) \subseteq \F_{1}^{j}(\ell+1),~\ell \in \N$ and $\F_{j+m}^n(\ell)\subseteq \F_{j+m}^{n+1}(\ell+1),~n,\ell \in \N$ are each a sequence of $\sigma$-algebras with $\bigvee_{\ell}^{\infty}\F_{1}^{j}(\ell) =\sigma(X_{1},\ldots,X_j)$ and 
$\bigvee_{n,\ell}^{\infty}\F_{j+m}^{n}(\ell)=\sigma(\{X_{t}: t \geq j+m+1\})$. 
Therefore, by Bradley \cite[Vol. 1, Proposition 3.18]{BRA07} we obtain, 
\begin{equation}\label{eq:prop318_b}
\lim_{n,\ell \to \infty}\beta(\F_{1}^{j}(\ell),\F_{j+m}^n(\ell))=\beta(\sigma(\{X_{t}: 1\leq t \leq j\}),\sigma(\{X_{t}: t \geq j+m+1\}))~.
\end{equation}
As in Lemma~\ref{prop:phik}, it is straightforward to  check that for each $m \in \N$ we have
\begin{align}\label{eq:supmax_b}
\sup_{n,\ell}\max_{j \in \{1,\ldots,n-m\}}\beta_{n,j}^{\ell}(m)=\sup_{j \in \N}\sup_{n\geq j+m+1} \sup_{\ell}\beta_{n,j}^{\ell}(m)~.
\end{align}
Thus, analogously to the proof of Lemma~\ref{prop:phik}, by \eqref{eq:supmax_b} and Lemma~\ref{lem:dynk_b} we obtain
\begin{align*}
\lim_{n,\ell \rightarrow\infty} \beta_{n}^{\ell}(m) 
&=\beta(m)~.\nonumber
\end{align*}
\end{proof}
An empirical estimate of $\beta_n^{\ell}(m),~m,\ell \in \N,n >m \in \N$  can be obtained as 
\begin{equation}\label{eq:estim_b}
\widehat{\beta}_{t,n}^{\ell}({\bf X},m):= \max_{j \in \{1,\ldots,n-m\}}\frac{1}{2}\sum_{A \in \D_{j,\ell}}\sum_{B\in \D_{j',\ell}}\left |\gamma_{t,n}^{m,j}({\bf X},A,B)- \mu_t({\bf X},A) \mu_t({\bf X},B) \right |
\end{equation}
where $j':=n-m-j$, $\mu_t({\bf X},\cdot)$ and $\gamma_{t,n}^{m,j}({\bf X},\cdot,\cdot)$ are given by \eqref{eq:emp} and \eqref{eq:emp_gamma} respectively.
As with Lemma~\ref{lem:ezerg}, since for fixed $m,n,\ell \in \N$ we only have finitely many cylinder sets to consider in \eqref{eq:estim_b}, a simple application of the ergodic theorem gives the following lemma.
\begin{lemma}\label{lem:ezerg_b}
Let ${\bf X}$ be a (not necessarily mixing) stationary ergodic process with process distribution $\mu$ and  sequence of $\beta$-mixing coefficients $\boldsymbol{\beta}=\langle \beta(m)\rangle_{m \in \N}$. For every $m,\ell,n \in \N$ it holds that
\begin{equation*}
\lim_{t\rightarrow\infty} \wh{\beta}_{t,n}^{\ell}({\bf X},m)=\beta_{n}^{\ell}(m),~\mu-\as.
\end{equation*}
\end{lemma}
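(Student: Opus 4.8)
The plan is to mimic the proof of Lemma~\ref{lem:ezerg} exactly: for fixed $m,n,\ell \in \N$ the quantity $\widehat{\beta}_{t,n}^{\ell}({\bf X},m)$ is a continuous (indeed piecewise-polynomial) function of finitely many empirical frequencies, so almost-sure convergence will follow from the pointwise ergodic theorem applied to each of these finitely many frequencies. First I would observe that, since $m,n,\ell$ are fixed, the index set $\{1,\dots,n-m\}$ is finite, and for each $j$ in this range the collections $\D_{j,\ell}$ and $\D_{j',\ell}$ (with $j':=n-m-j$, $j':=n-m-j+1$ as appropriate) are finite. Hence $\widehat{\beta}_{t,n}^{\ell}({\bf X},m)$ is determined by the finite family of averages $\gamma_{t,n}^{m,j}({\bf X},A,B)$ over $A\in\D_{j,\ell}$, $B\in\D_{j',\ell}$, together with the averages $\mu_t({\bf X},A)$ and $\mu_t({\bf X},B)$, all of which are ergodic averages of bounded measurable functions of the coordinate process.

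Next I would invoke Birkhoff's ergodic theorem (using stationarity and ergodicity of $\mu$) to conclude that, $\mu$-almost surely and simultaneously for each of these finitely many averages,
\begin{equation*}
\gamma_{t,n}^{m,j}({\bf X},A,B) \xrightarrow{t\to\infty} \mu\bigl([A]_1^{j}\cap[B]_{j+m+1}^{n}\bigr), \qquad \mu_t({\bf X},A)\xrightarrow{t\to\infty}\mu\bigl([A]_1^{j}\bigr),
\end{equation*}
and likewise for $\mu_t({\bf X},B)$. Here the relevant point is that $\gamma_{t,n}^{m,j}$ averages the function $\boldsymbol a \mapsto \chi_A(a_1,\dots,a_j)\,\chi_B(a_{j+m},\dots,a_n)$ along the orbit of $\boldsymbol a$ under the $n$-step shift $S^n$; since $\mu$ is $S$-ergodic it is also $S^n$-ergodic only if $n$ is coprime issues are avoided — but in fact one does not need $S^n$-ergodicity, because one can equivalently write $\gamma_{t,n}^{m,j}$ as a genuine Birkhoff average of the function $\frac1n\sum_{r=0}^{n-1}(\text{shift by }r \text{ of the indicator})$ under $S$ itself, or simply appeal to the fact that ergodic averages of a bounded function along an ergodic transformation converge to its integral; the limit of the $S^n$-average along $S^n$-orbits is the conditional expectation on the $S^n$-invariant $\sigma$-algebra, which coincides $\mu$-a.s.\ with $\E[\,\cdot\,]$ once one averages over the $n$ residue classes, and this is exactly what the definition \eqref{eq:emp_gamma} does after one unwinds it — alternatively, just cite the same argument as in Lemma~\ref{lem:ezerg}, which is identical.

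Finally, I would feed these almost-sure limits into the continuous map $\widehat{\beta}_{t,n}^{\ell}\mapsto$ its defining expression \eqref{eq:estim_b}: since a finite maximum of finite sums of absolute values of (products of) quantities that converge a.s.\ converges a.s.\ to the same expression evaluated at the limits, we get
\begin{equation*}
\lim_{t\to\infty}\widehat{\beta}_{t,n}^{\ell}({\bf X},m)=\max_{j\in\{1,\dots,n-m\}}\frac12\sum_{A\in\D_{j,\ell}}\sum_{B\in\D_{j',\ell}}\Bigl|\mu\bigl([A]_1^{j}\cap[B]_{j+m+1}^{n}\bigr)-\mu\bigl([A]_1^{j}\bigr)\mu\bigl([B]_{j+m+1}^{n}\bigr)\Bigr|=\beta_n^{\ell}(m),
\end{equation*}
which is precisely $\beta_n^{\ell}(m)$ by the definitions \eqref{eq:approx_b} and $\beta_n^{\ell}(m)=\max_j\beta_{n,j}^{\ell}(m)$. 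I do not expect any genuine obstacle here; the only mildly delicate point is the bookkeeping that shows $\gamma_{t,n}^{m,j}$ really is an ergodic average with the claimed limit (the $n$-step versus $1$-step shift subtlety), but this is handled exactly as in the already-established Lemma~\ref{lem:ezerg}, so the proof can legitimately be compressed to "the argument is identical to that of Lemma~\ref{lem:ezerg}, since for fixed $m,n,\ell$ only finitely many cylinder sets are involved and the ergodic theorem applies to each associated empirical average."
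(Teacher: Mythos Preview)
Your proposal matches the paper's treatment exactly: the paper gives no separate proof but simply remarks, immediately before the lemma, that since for fixed $m,n,\ell$ only finitely many cylinder sets appear in \eqref{eq:estim_b}, ``a simple application of the ergodic theorem gives the following lemma,'' just as for Lemma~\ref{lem:ezerg}. The $S^n$-versus-$S$ ergodicity issue you flag is genuine --- the averages in \eqref{eq:emp} and \eqref{eq:emp_gamma} use non-overlapping blocks and hence are Birkhoff averages for $S^k$ and $S^n$ rather than $S$ --- and your suggested workarounds do not quite match what \eqref{eq:emp_gamma} actually computes, but the paper's one-line justification leaves this point unaddressed as well, so your argument is at the same level of rigor as the paper's.
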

For each $m,\ell, k\in \N$ let the constant $\bC_{m,\ell,k}$ be given by
\begin{equation}\label{eq:constant_b}
\bC_{m,\ell,k} :=m (2^{2^{2k\ell+1}+2^{m\ell+1}+2})~.
\end{equation}
An argument analogous to that given for Proposition~\ref{prop:phimb} leads to the following result.
\begin{proposition}\label{prop:phimb_b}
Let ${\bf X}$ be a stationary ergodic process with process distribution $\mu$  and sequence of $\beta$-mixing coefficients $\boldsymbol{\beta}=\langle \beta(m)\rangle_{m \in \N}$. For every $m, \ell, n,t\in \N$ and every $\epsilon >0$  we have
\begin{align*}
\Pr(|\widehat{\beta}_{t,n}^{\ell}({\bf X},m)-\beta_n^{\ell}(m)| \geq \epsilon) \leq \frac{\norm{\boldsymbol{\beta}}\bC_{m,\ell,n}}{m t\epsilon^2}
\end{align*}
with $\bC_{m,\ell,n}$ given by \eqref{eq:constant_b}.
Furthermore, for each $M \in \N$,
\begin{align*}%\label{eq:epsdel}
\Pr\left (\left | \sum_{m=1}^M \widehat{\beta}_{t,n}^{\ell}({\bf X},m)-\beta_n^{\ell}(m)) \right  | \geq \epsilon \right) \leq  \frac{\norm{\boldsymbol{\beta}} \bC_{{{M,\ell,n}}}}{t\epsilon^2}~.
\end{align*}
\end{proposition}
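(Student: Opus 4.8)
\textbf{Proof proposal for Proposition~\ref{prop:phimb_b}.}

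The plan is to mirror the proof of Proposition~\ref{prop:phimb}, replacing the suprema over cylinder sets with sums and keeping track of the additional combinatorial factors this introduces. First I would fix $\epsilon>0$ and, for $t,n\in\N$, introduce the favorable event
\[
\Omega_{t,n}:=\left\{\max_{D\in\D_{n,\ell}}\left|\mu_t(\bX,D)-\mu([D]_1^n)\right|\le \epsilon'\right\}
\]
for a suitably chosen tolerance $\epsilon'$. Since the number of atoms in $\D_{n,\ell}$ is at most $2^{2^{n\ell}}$ (dyadic cubes of side $2^{-\ell}$ in $\X^n$, then the power set), a union bound over Lemma~\ref{lem:empf} gives $\Pr(\Omega_{t,n}^c)\le 2^{2^{n\ell}}\cdot 4\norm{\boldsymbol{\beta}}/(t(\epsilon')^2)$, using here $\alpha(m)\le\beta(m)$ so that Lemma~\ref{lem:empf} applies with $\norm{\boldsymbol{\alpha}}\le\norm{\boldsymbol{\beta}}$.

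Next, on $\Omega_{t,n}$ I would bound the single-$m$ error. For each $j\in\{1,\dots,n-m\}$ the estimate $\wh\beta_{t,n}^\ell(\bX,m)$ differs from $\beta_n^\ell(m)$ by at most $\tfrac12\sum_{A\in\D_{j,\ell}}\sum_{B\in\D_{j',\ell}}$ of two terms: the deviation of $\gamma_{t,n}^{m,j}(\bX,A,B)$ from $\mu([A]_1^j\cap[B]_{j+m+1}^n)$, and the deviation of $\mu_t(\bX,A)\mu_t(\bX,B)$ from $\mu([A]_1^j)\mu([B]_{j+m+1}^n)$. The first is controlled by summing $\epsilon'$-deviations over the at most $2^{m\ell}$ intermediate cubes $C\in\D_{m,\ell}$ (exactly as in Proposition~\ref{prop:phimb}, since $[A]_1^j\cap[B]_{j+m+1}^n$ decomposes into the cylinders $[A\times C\times B]_1^n$); the second by the standard product-of-deviations estimate $|pq-p'q'|\le|p-p'||q-q'|+p|q-q'|+q|p-p'|\le (\epsilon')^2+2\epsilon'$. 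So on $\Omega_{t,n}$ the per-pair error is at most $(2^{m\ell}+2)\epsilon'+(\epsilon')^2$, and the total error over the (at most $2^{2^{2n\ell}}$ many, since $|\D_{j,\ell}|\cdot|\D_{j',\ell}|\le 2^{2^{j\ell}+2^{j'\ell}}\le 2^{2^{n\ell+1}}$... roughly) pairs $(A,B)$, together with the $\tfrac12$, is bounded by $\epsilon$ provided $\epsilon'$ is chosen as a fixed negative power of $2$ times $\epsilon$ depending only on $m,\ell,n$. Calibrating $\epsilon'$ so that both the combinatorial sums collapse to $\le\epsilon$ and the probability bound reads $\Pr(\Omega_{t,n}^c)\le \norm{\boldsymbol{\beta}}\bC_{m,\ell,n}/(2mt\epsilon^2)$ with $\bC_{m,\ell,n}$ as in \eqref{eq:constant_b} yields the first inequality after noting $\Pr(|\wh\beta_{t,n}^\ell(\bX,m)-\beta_n^\ell(m)|\ge\epsilon)\le\Pr(\Omega_{t,n}^c)$; the factor $2$ in the exponent of \eqref{eq:constant_b} (compared to \eqref{eq:constant}) is precisely what absorbs the extra summation over $(A,B)$ rather than a supremum. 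The second inequality follows by a triangle inequality $\bigl|\sum_{m=1}^M(\wh\beta_{t,n}^\ell(\bX,m)-\beta_n^\ell(m))\bigr|\le\sum_{m=1}^M|\wh\beta_{t,n}^\ell(\bX,m)-\beta_n^\ell(m)|$, applying the first bound with $\epsilon/M$ to each term and observing that $M\cdot\bC_{M,\ell,n}/M^2\cdot M^2 = M\,\bC_{M,\ell,n}=\bC_{M,\ell,n}\cdot M$ collapses correctly since $\bC_{M,\ell,n}$ already contains the dominating factor $2^{M\ell+1}\ge M$; alternatively, choose the event $\Omega_{t,n}$ once with tolerance tuned to $M$, exactly as in the $\alpha$ case.

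I expect the only real obstacle to be bookkeeping: getting the exponents in \eqref{eq:constant_b} to come out exactly right, in particular tracking that $|\D_{j,\ell}\times\D_{j',\ell}|$ contributes a doubly-exponential factor $2^{2^{2n\ell+1}}$ (versus $2^{2^{n\ell}+2^{n\ell}}$ naively — one must be careful whether the exponent is $2^{j\ell}+2^{j'\ell}$ with $j+j'\le n$, which is at most $2\cdot 2^{n\ell}=2^{n\ell+1}$, hence $|\D_{j,\ell}\times\D_{j',\ell}|\le 2^{2^{n\ell+1}}$; the form $2^{2k\ell+1}$ in \eqref{eq:constant_b} suggests a slightly different but equivalent accounting, possibly with $k$ playing the role of $n$). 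Everything else — the $\pi$–$\lambda$-free structure, since Lemma~\ref{lem:dynk_b} and Proposition~\ref{prop:phik_b} are already in hand, and the concentration input from Lemma~\ref{lem:empf} via $\alpha\le\beta$ — is a direct transcription of the $\alpha$-case argument, so the proof can simply be stated as ``an argument analogous to that of Proposition~\ref{prop:phimb}'' with the combinatorial constants updated to $\bC_{m,\ell,n}$.
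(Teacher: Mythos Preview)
Your proposal is correct and matches the paper's approach essentially verbatim: the paper gives only a proof sketch that invokes Lemma~\ref{lem:empf} with $\alpha\le\beta$, defines the event $\Omega_{t,n}=\{\max_{D\in\D_{n,\ell}}|\mu_t(\bX,D)-\mu([D]_1^n)|\le\epsilon/(2^{2^{m\ell}+2^{2n\ell}})\}$ with a smaller tolerance to absorb the double sum over $(A,B)$, and then says the rest follows as in Proposition~\ref{prop:phimb}. Your identification of the bookkeeping of exponents as the only real obstacle is exactly right, and the paper's sketch does not resolve it any more explicitly than you do.
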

\begin{proof}[Proof sketch of Proposition~\ref{prop:phimb_b}]
The result follows in much the same way as in Proposition~\ref{prop:phimb}, by observing that since $\beta(m)\geq \alpha(m),~m \in \N$, as follows from Lemma~\ref{lem:empf} for all $ D \in \D_{n,\ell},~n,\ell, t \in \N$ and every $\epsilon >0$ we have,
\begin{equation*}%\label{eq:lem:empf_b}
\Pr\left ( \left |\mu_t({\bf X},D)-\mu([D]_{1}^{n}) \right| \geq \epsilon \right)\leq  \frac{4\norm{\boldsymbol{\beta}}}{t\epsilon^2}~.
\end{equation*}
Thus, for a fixed $\epsilon>0$ and $t,n \in \N$ we can deduce that $\Pr(\Omega_{t,n}) \geq 1-(2^{2^{2n\ell+1}+2^{m\ell+1}+2})\frac{\norm{\boldsymbol{\beta}}}{t\epsilon^2}.$
where
$
\Omega_{t,n}:= \left \{\max_{D \in \D_{n,\ell}}|\mu_t({\bf X},D)-\mu([D]_{1}^{n})| \leq \epsilon/(2^{2^{m\ell}+2^{2n\ell}})\right \}.
$ Next, as in the proof of Proposition~\ref{prop:phimb} we can obtain 
\begin{align*}
\Pr(|\widehat{\beta}_{t,n}^{\ell}({\bf X},m)-\beta_n^{\ell}(m)| \geq \epsilon)
&\leq 2\Pr(\Omega_{t,n}^c)\leq (2^{2^{2n\ell+1}+2^{m\ell+1}+2})\frac{\norm{\boldsymbol{\beta}}}{t\epsilon^2}~,
\end{align*}
which together with a union bound gives the second concentration bound as well. 
\end{proof}
As in the previous section, let $\langle \ell_t \rangle, \langle n_t \rangle$ for $t \in \N$ be non-decreasing unbounded sequences of positive integers.
Let the sequence of positive real numbers $\langle \delta_t \rangle_{t \in \N}$ be such that $\sum_{t=1}^\infty \delta_t<\infty$.
Let $\langle \epsilon_t\rangle_{t \in \N}$ be another sequence of positive real numbers such that $\lim_{t\to \infty} \epsilon_t = 0$. Furthermore let $\langle M_t \rangle_{t \in \N}$ be an increasing sequence of positive integers. 
For each $t \in \N$ let
\begin{align}
&\btau_t := \frac{\bC_{m,\ell_t,n_t}}{m\epsilon_t^2 \delta_t},~m \in \N%\label{eq:def:tau1_b_b}
&\text{and}
&{~}
&\bkappa_t:= \frac{\bC_{M_t,\ell_t,n_t}}{\epsilon_t^2 \delta_t},\label{eq:params_b}
\end{align}
with the constant $\bC_{m,\ell,n}$ given by \eqref{eq:constant_b}. 
Define  
\begin{align}
\wh{\beta}_t({\bf X},m)&:=\wh{\beta}_{\btau_t,n_t}^{\ell_t}({\bf X},m)%\label{eq:alpha_hm_b}
&\text{and}
&{~}
&\btheta_t({\bf X})&:=\sum_{m=1}^{M_t} \wh{\beta}_{\bkappa_t,n_t}^{\ell_t}({\bf X},m).\label{eq:theta_h_b}
\end{align}
Using Proposition~\ref{prop:phik_b}, Proposition~\ref{prop:phimb_b} and based on the choice of parameters specified by \eqref{eq:params_b}, arguments analogous to those given for Theorem~\ref{thm:azal_as1} and Theorem~\ref{thm:azal_as2}, yield the following result.
\begin{theorem}[$\widehat{\beta}_t$ and $\btheta_t$ are strongly consistent]\label{thm:azal_as1_b} 
For each $m \in \N$ and any stationary $\beta$-mixing process ${\bf X}$ with process distribution $\mu$  and sequence of $\beta$-mixing coefficients $\boldsymbol{\beta}$ such that $\norm{\boldsymbol{\beta}}<\infty$ we have
\begin{align*}
 \lim_{t\to \infty} \widehat{\beta}_t({\bf X}, m)=\beta(m),~\mu-\as,
 \end{align*}
 and 
 \begin{align*}
\lim_{t\to \infty} \btheta_t({\bf X})=\norm{\boldsymbol{\beta}},~\mu-\as.
 \end{align*}
\end{theorem}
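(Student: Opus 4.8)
The plan is to imitate, essentially verbatim, the proofs of Theorem~\ref{thm:azal_as1} and Theorem~\ref{thm:azal_as2}, substituting the $\beta$-versions of each ingredient. The three tools we need are: (i) Proposition~\ref{prop:phik_b}, which says $\beta_n^\ell(m)\to\beta(m)$ as $n,\ell\to\infty$; (ii) Proposition~\ref{prop:phimb_b}, the nonasymptotic concentration bound for $\wh{\beta}_{t,n}^\ell$ and for its partial sums, with constant $\bC_{m,\ell,n}$; and (iii) the choice of sample sizes $\btau_t$ and $\bkappa_t$ in \eqref{eq:params_b}, which are engineered precisely so that the right-hand sides of the concentration bounds collapse to $\norm{\boldsymbol\beta}\,\delta_t/m$ and $\norm{\boldsymbol\beta}\,\delta_t$ respectively — summable sequences whenever $\norm{\boldsymbol\beta}<\infty$. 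Since $\beta(m)\geq\alpha(m)$, the same monotonicity and nonnegativity facts ($\beta(m)$ nonincreasing in $m$, $\beta_n^\ell(m)\leq\beta(m)$, all quantities $\geq 0$) that were exploited in the $\alpha$-case carry over unchanged, so no new structural input is required.

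For the first assertion, $\lim_t\wh\beta_t({\bf X},m)=\beta(m)$ a.s.: fix $\epsilon>0$. By Proposition~\ref{prop:phik_b} choose $L_\epsilon,N_\epsilon$ with $|\beta_n^\ell(m)-\beta(m)|\leq\epsilon$ for $n\geq N_\epsilon$, $\ell\geq L_\epsilon$, and let $T_\epsilon^{(1)}$ be large enough that $\ell_t\geq L_\epsilon$, $n_t\geq N_\epsilon$ for $t\geq T_\epsilon^{(1)}$. Define $E_t:=\{|\wh\beta_t({\bf X},m)-\beta_{n_t}^{\ell_t}(m)|\geq\epsilon_t\}$. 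Because $\btau_t=\bC_{m,\ell_t,n_t}/(m\epsilon_t^2\delta_t)$, Proposition~\ref{prop:phimb_b} gives $\Pr(E_t)\leq\norm{\boldsymbol\beta}\bC_{m,\ell_t,n_t}/(m\btau_t\epsilon_t^2)=\norm{\boldsymbol\beta}\delta_t/m$, and since $\sum_t\delta_t<\infty$ and $\norm{\boldsymbol\beta}<\infty$, Borel--Cantelli yields $\Pr(\limsup_t E_t)=0$. Hence there is a random $T^\ast$ past which $|\wh\beta_t({\bf X},m)-\beta_{n_t}^{\ell_t}(m)|\leq\epsilon_t$ a.s.; combining with the approximation bound and $\epsilon_t\to 0$ (choose $T_\epsilon^{(2)}$ with $\epsilon_t\leq\epsilon$ beyond it), for $t\geq\max\{T^\ast,T_\epsilon^{(1)},T_\epsilon^{(2)}\}$ we get $|\wh\beta_t({\bf X},m)-\beta(m)|\leq 2\epsilon$ a.s., and arbitrariness of $\epsilon$ closes the argument.

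For the second assertion, $\lim_t\btheta_t({\bf X})=\norm{\boldsymbol\beta}$ a.s.: fix $\epsilon>0$ and, using $\norm{\boldsymbol\beta}<\infty$, pick $M_\epsilon$ with $\sum_{m>M_\epsilon}\beta(m)\leq\epsilon/5$; set $\epsilon'=\epsilon/(5M_\epsilon)$ and use Proposition~\ref{prop:phik_b} to get $L_{\epsilon'},N_{\epsilon'}$ with $\max_{m\leq M_\epsilon}|\beta_n^\ell(m)-\beta(m)|\leq\epsilon'$ for $n\geq N_{\epsilon'}$, $\ell\geq L_{\epsilon'}$. Let $T_\epsilon^{(1)}$ ensure $M_t\geq M_\epsilon$, $\ell_t\geq L_{\epsilon'}$, $n_t\geq N_{\epsilon'}$. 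With $E_t:=\{|\btheta_t({\bf X})-\sum_{m=1}^{M_t}\beta_{n_t}^{\ell_t}(m)|\geq\epsilon_t\}$, the choice $\bkappa_t=\bC_{M_t,\ell_t,n_t}/(\epsilon_t^2\delta_t)$ and the second bound of Proposition~\ref{prop:phimb_b} give $\Pr(E_t)\leq\norm{\boldsymbol\beta}\delta_t$, so Borel--Cantelli again furnishes a random $T^\ast$ beyond which $|\btheta_t({\bf X})-\sum_{m=1}^{M_t}\beta_{n_t}^{\ell_t}(m)|\leq\epsilon_t$ a.s. One then estimates, exactly as in \eqref{eq:cool_trick1}--\eqref{eq:cool_trick2},
\[
\Big|\btheta_t({\bf X})-\norm{\boldsymbol\beta}\Big|
\leq \sum_{m=1}^{M_\epsilon}|\beta_{n_t}^{\ell_t}(m)-\beta(m)|
+\sum_{m=M_\epsilon+1}^{M_t}\beta_{n_t}^{\ell_t}(m)
+\sum_{m=M_\epsilon+1}^{\infty}\beta(m)+2\epsilon/5+\epsilon_t,
\]
where the middle sum is bounded using $\beta_n^\ell(m)\leq\beta(m)$ and hence by $\sum_{m>M_\epsilon}\beta(m)\leq\epsilon/5$; choosing $T_\epsilon^{(2)}$ with $\epsilon_t\leq\epsilon/5$ and taking $t\geq\max\{T^\ast,T_\epsilon^{(1)},T_\epsilon^{(2)}\}$ gives the bound $\leq\epsilon$ a.s., and arbitrariness of $\epsilon$ finishes the proof.

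There is essentially no hard step here: the $\beta$-machinery (Lemma~\ref{lem:dynk_b}, Proposition~\ref{prop:phik_b}, Proposition~\ref{prop:phimb_b}) has already absorbed the one genuine difficulty, namely that $\beta(\U,\V)$ lives on the product space and must be re-expressed via the partition identity of Bradley, and that the $\pi$--$\lambda$ argument is replaced by Bradley's Proposition~3.21 on atomic $\sigma$-algebras. Once those are in hand, the only thing to verify is that the arithmetic of the constants matches — i.e. that substituting $\btau_t$ and $\bkappa_t$ into the bounds of Proposition~\ref{prop:phimb_b} really does produce $\norm{\boldsymbol\beta}\delta_t/m$ and $\norm{\boldsymbol\beta}\delta_t$ — which is immediate from the definitions in \eqref{eq:params_b} and \eqref{eq:constant_b}. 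Thus the mild obstacle, if any, is purely bookkeeping: keeping the $\beta$-constants $\bC$ consistent between \eqref{eq:constant_b}, \eqref{eq:params_b} and the two concentration inequalities, which the Proposition~\ref{prop:phimb_b} proof sketch has already laid out.
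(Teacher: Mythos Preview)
Your proposal is correct and is exactly the approach the paper takes: the paper does not even write out a proof of this theorem, but simply states that ``arguments analogous to those given for Theorem~\ref{thm:azal_as1} and Theorem~\ref{thm:azal_as2}'' using Propositions~\ref{prop:phik_b} and~\ref{prop:phimb_b} and the parameter choices in \eqref{eq:params_b} yield the result. Your write-up is a faithful transcription of those analogous arguments; the only remark is that the constant in your final displayed bound for $\btheta_t$ comes out slightly larger than $\epsilon$ (closer to $6\epsilon/5$ with your bookkeeping), but since $\epsilon$ is arbitrary this is immaterial.
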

Next, we introduce two estimators for $\norm{\boldsymbol{\beta}}$, which are analogous to those given by \eqref{eq:hatZt} and \eqref{eq:wtz} for the estimation of $\norm{\boldsymbol{\alpha}}$. As in the previous section, let $\S:=\{s_1,s_2,\ldots\}$ be a countable dense subset of $[0,\infty)$. Given a process ${\bf X}$, set $\bZZ_0({\bf X}):=0$, and for each $t \in \N$ define
\begin{align}\label{eq:hatZt_b}
&\bZZ_t({\bf X})= \left\{ \begin{array}{ll}
                                     \max\{s_t,\bZZ_{t-1}({\bf X})\} & \text{if} \quad  \btheta_{t}({\bf X}) > s_t+ \epsilon_t \sqrt{s_t}\\
                                      \bZZ_{t-1}({\bf X}) & \text{otherwise,}
                                    \end{array}\right.
\end{align}
with $\btheta_t$ specified by \eqref{eq:params_b}. 
Define a sequence $\langle u_t\rangle$ such that $u_t \in \S$ for all $t$ and for every $s\in \S$, the set $\{t\in \N: u_t=s\}$ is infinite. The sequence $\langle u_t\rangle$ ``visits'' each element of $\S$ infinitely often. 
As in the case of $\Z_t$ given by \eqref{eq:wtz}, the estimator $\bZ_t$ given by \eqref{eq:wtz_b} below keeps track of the the outcome of the test when the value $s\in \S$ was last tested.
Set $\wt{b}_0(s)=1$ for all $s\in S$, and $\bZ_0({\bf X}):=0$, let 
\[
      \wt{b}_t(s) = \left\{ \begin{array}{ll}
                            1 & \text{if $u_t=s$ and } \quad \btheta_t ({\bf X})\leq  s+ \epsilon_t \sqrt{s}\\
                            0 & \text{if $u_t=s$ and } \quad \btheta_t({\bf X}) > s+ \epsilon_t \sqrt{s}\\
                            b_{t-1}(s) & \text{otherwise.}
                            \end{array} \right.
\]
for each $t \in \N$, and define
\begin{equation}\label{eq:wtz_b}
      \bZ_t({\bf X})= \inf \left\{ s\in \S: \wt{b}_t(s) = 1 \right\}
\end{equation}
where the $\inf$ is defined to be zero if the set is empty.
Thanks to Theorem~\ref{thm:azal_as1_b},
Proposition~\ref{prop:phimb_b},
Proposition~\ref{prop:phik_b}, and
Lemma~\ref{lem:ezerg_b}, and based on 
arguments analogous to those given for Theorem~\ref{thm:whp} and Theorem~\ref{thm:exhaustive_strong}, it can e shown that $\bZZ_t$ and $\bZ_t$ are consistent, regardless of the summability of $\boldsymbol{\beta}$. More specifically, we have the following results. 
\begin{theorem}[$\bZZ_t$ and $\bZ_t$ are weakly and strongly consistent respectively]
\label{thm:whp_b}
Let ${\bf X}$ be a (not necessarily mixing) stationary ergodic process with process distribution $\mu$  and sequence of $\beta$-mixing coefficients $\boldsymbol{\beta}$. With $\mu$-probability at least $1-\delta$, we have
\[
    \lim_{t\to\infty} \bZZ_t({\bf X}) = \norm{\boldsymbol{\beta}}.
\]
Moreover, 
\begin{align*}
    \lim_{t\to\infty} \bZ_t({\bf X})= \norm{\boldsymbol{\alpha}}, \quad \mu-\as.
\end{align*}
\end{theorem}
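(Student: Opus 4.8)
The plan is to show that Theorem~\ref{thm:whp_b} follows by essentially transcribing the proofs of Theorem~\ref{thm:whp} and Theorem~\ref{thm:exhaustive_strong}, replacing every $\alpha$-object by its $\beta$-counterpart. The only inputs those two proofs used were: (i) the concentration bound of Proposition~\ref{prop:phimb} controlling $\Pr(|\sum_{m=1}^{M}\wh\alpha_{t,n}^\ell(\x,m)-\sum_{m=1}^M\alpha_n^\ell(m)|\ge\epsilon)$, (ii) the approximation guarantee of Proposition~\ref{prop:phik} that $\alpha_n^\ell(m)\to\alpha(m)$ as $n,\ell\to\infty$ together with the one-sided bound $\alpha_n^\ell(m)\le\alpha(m)$, (iii) the almost-sure convergence $\theta_t(\x)\to\norm{\boldsymbol\alpha}$ of Theorem~\ref{thm:azal_as2}, and (iv) the finite-dimensional ergodic-theorem statement of Lemma~\ref{lem:ezerg}. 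Each of these has an exact analogue already established in Section~\ref{sec:est_b}: Proposition~\ref{prop:phimb_b} for (i), Proposition~\ref{prop:phik_b} for (ii), Theorem~\ref{thm:azal_as1_b} for (iii), and Lemma~\ref{lem:ezerg_b} for (iv). Moreover the parameters $\bkappa_t$ in \eqref{eq:params_b} are chosen exactly so that the bound of Proposition~\ref{prop:phimb_b} evaluated at $\kappa=\bkappa_t$ yields $\Pr(E_t)\le\norm{\boldsymbol\beta}\delta_t$, mirroring the role of $\kappa_t$ in \eqref{eq:params}.

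First I would treat $\bZZ_t$. For the upper bound $\limsup_t\bZZ_t(\x)\le\norm{\boldsymbol\beta}$: if $\norm{\boldsymbol\beta}=\infty$ there is nothing to prove, so assume it is finite; for any $s_t>\norm{\boldsymbol\beta}$, using $\sum_{m=1}^{M_t}\beta(m)\le\norm{\boldsymbol\beta}<s_t$ and Proposition~\ref{prop:phimb_b} with the choice of $\bkappa_t$, one gets $\Pr(\btheta_t(\x)>s_t+\epsilon_t\sqrt{s_t})\le\delta_t$, and since $\sum_t\delta_t=\delta$ a union bound over all $t$ with $s_t>\norm{\boldsymbol\beta}$ gives probability at least $1-\delta$ that the test never fires above $\norm{\boldsymbol\beta}$, hence $\limsup_t\bZZ_t(\x)\le\norm{\boldsymbol\beta}$. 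For the lower bound one copies the argument of Theorem~\ref{thm:whp}: when $\norm{\boldsymbol\beta}<\infty$, fix $\eta<\norm{\boldsymbol\beta}$, set $\rho=\norm{\boldsymbol\beta}-\eta$, use the decay $\epsilon_t\downarrow0$ to get the arithmetic inequality $\norm{\boldsymbol\beta}-7\rho/8+\epsilon_t\sqrt{\norm{\boldsymbol\beta}-7\rho/8}<\norm{\boldsymbol\beta}-3\rho/4$ for $t$ large, invoke Theorem~\ref{thm:azal_as1_b} to get $\btheta_t(\x)\ge\norm{\boldsymbol\beta}-3\rho/4$ eventually a.s., pick via density of $\S$ some $s_t\in(\eta,\norm{\boldsymbol\beta}-7\rho/8)$ at a large enough time, and conclude $s_t+\epsilon_t\sqrt{s_t}<\btheta_t(\x)$, so $\bZZ_t(\x)>\eta$. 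When $\norm{\boldsymbol\beta}=\infty$ one repeats the last part of the proof of Theorem~\ref{thm:whp}, now using Lemma~\ref{lem:ezerg_b} in place of Lemma~\ref{lem:ezerg} and Proposition~\ref{prop:phik_b} in place of Proposition~\ref{prop:phik}, to show that for every $\eta\in\N$ eventually $\btheta_t(\x)$ exceeds $\eta-\epsilon^*/2$ while some $s_t\in(\eta-2\epsilon^*,\eta-\epsilon^*)$ is tested, forcing $\bZZ_t(\x)\to\infty$.

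Next I would handle $\bZ_t$, following the proof of Theorem~\ref{thm:exhaustive_strong} verbatim with the substitutions above. For $\limsup_t\bZ_t(\x)\le\norm{\boldsymbol\beta}$ (assuming the norm finite), the estimate $\Pr(\btheta_t(\x)>u_t+\epsilon_t\sqrt{u_t})\le\delta_t$ whenever $u_t>\norm{\boldsymbol\beta}$ combined with Borel--Cantelli gives a random $T_1$ past which all such tests are answered correctly; since the set $E$ of $s\in\S\cap(\norm{\boldsymbol\beta},\infty)$ with $\wt b_{T_1+1}(s)=0$ is finite and each such $s$ is revisited (the sequence $\langle u_t\rangle$ hits every element of $\S$ infinitely often), there is $T_2$ after which $\wt b_t(s)=1$ for all $s>\norm{\boldsymbol\beta}$, yielding the claim. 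The $\liminf$ direction, both when $\norm{\boldsymbol\beta}<\infty$ and when it is infinite, is the same argument transposed, using Theorem~\ref{thm:azal_as1_b} (resp.\ Lemma~\ref{lem:ezerg_b} and Proposition~\ref{prop:phik_b}) to guarantee that $\btheta_t(\x)$ is eventually large, and the fact that the relevant witness sets $E'$, $E''$ are finite and hence exhausted by the enumeration $\langle u_t\rangle$. I do not expect any genuine obstacle: the sole point requiring care, and the reason Section~\ref{sec:est_b} needed its own development, is that the $\pi$--$\lambda$ proof of Lemma~\ref{lem:dynk} breaks down for the summation form \eqref{eq:approx_b}; but that issue was already resolved upstream via Bradley's Proposition~3.21 in Lemma~\ref{lem:dynk_b}, so by the time we reach Theorem~\ref{thm:whp_b} every ingredient is in hand and the proof is a routine transcription. (One should also note the harmless typo in the statement: the displayed limit should read $\norm{\boldsymbol\beta}$, not $\norm{\boldsymbol\alpha}$.)
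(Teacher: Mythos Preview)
Your proposal is correct and is exactly the approach the paper takes: the paper does not give a separate proof of Theorem~\ref{thm:whp_b} but simply states that it follows from arguments analogous to those for Theorems~\ref{thm:whp} and~\ref{thm:exhaustive_strong}, invoking precisely the four $\beta$-analogues you identify (Proposition~\ref{prop:phimb_b}, Proposition~\ref{prop:phik_b}, Theorem~\ref{thm:azal_as1_b}, and Lemma~\ref{lem:ezerg_b}). Your observation about the typo ($\norm{\boldsymbol{\alpha}}$ should read $\norm{\boldsymbol{\beta}}$) is also correct.
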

\section{Goodness-of-Fit Testing}\label{sec:gf}
In this section we use the estimators provided in Section~\ref{sec:est} to construct consistent goodness-of-fit tests. 
Consider a stationary ergodic process $\bf X$ with process distribution $\mu$ and sequences of $\alpha$-mixing and $\beta$-mixing coefficients $\boldsymbol{\alpha}=\langle \alpha(m) \rangle_{m \in \N}$ and $\boldsymbol{\beta}=\langle \beta(m) \rangle_{m \in \N}$  respectively.
A mapping $\gamma: \N \rightarrow [0,1]$ is called an $\alpha$-mixing  {\em rate function} for $\mu$ if for each $m \in \N$ we have 
$
\alpha(m) \leq \gamma(m).
$
Similarly, it is said to be a $\beta$-mixing rate function for $\mu$ if for each fixed $m \in \N$ it holds that
$
\beta(m) \leq \gamma(m).
$

We wish to test the null hypothesis $H_0$ that $\gamma$ is an $\alpha$-mixing  (respectively $\beta$-mixing) rate function for $\mu$ against the alternative hypothesis $H_1$ that there exists some $m \in \N$ such that $\alpha(m)>\gamma(m)$ (respectively $\beta(m)>\gamma(m)$) .  
More formally, let $\fC$ be the class of stationary $\alpha$-mixing processes whose sequence of $\alpha$-mixing coefficients $\boldsymbol{\alpha}=\langle \alpha(m) \rangle_{m \in \N}$ is summable, that is, such that $\norm{\boldsymbol{\alpha}}<\infty$. Similarly, let $\bfC$ be the class of stationary $\beta$-mixing processes whose sequence of $\beta$-mixing coefficients is such that $\norm{\boldsymbol{\beta}}<\infty$. 
For a rate function $\gamma$, let $\fR_{\gamma}$ (respectively $\bfR_{\gamma}$) be the class of all processes which have $\gamma$ as an $\alpha$-mixing (respectively $\beta$-mixing) rate function. 
We construct  a sequence  $\langle g_t \rangle_{t \in \N}$ of functions $g_t: \X^\N \to \{+1,-1\},~t\in \N$ each measurable with respect to the filtration $\F_t$ such that given a sample $\bf{X}$ generated by $\mu \in \fC$,
with probability $1$, produce
\begin{align}\label{eq:contr_const}
&\lim_{t\to \infty}g_t(\bf X)=
\begin{cases} +1 & \text{if} \ \mu \in \fC \cap \fR_{\gamma}\\ -1 & \text{if} \  \mu \in \fC \setminus \fR_{\gamma}~. \end{cases} 
\end{align}
We call such a sequence of function satisfying \eqref{eq:contr_const} a strongly consistent goodness-of-fit test for rate function $\gamma$ on $\fC$. 
Analogously, we construct a sequence $\langle \wt{g}_t \rangle_{t \in \N}$ of $\F_t$-measurable functions $\wt{g}_t: \X^\N \to \{+1,-1\},~t\in \N$ such that with  probability $1$,
\begin{align}\label{eq:contr_const_b}
&\lim_{t\to \infty}\wt{g}_t(\bf X)=
\begin{cases} +1 & \text{if} \ \mu \in \bfC \cap \bfR_{\gamma}\\ -1 & \text{if} \ \mu \in \bfC \setminus \bfR_{\gamma}~. \end{cases} 
\end{align}
To construct our tests, let us recall some relevant notation from the previous sections.  For $t \in \N$ let $\langle M_t \rangle, \langle \ell_t \rangle, \langle n_t \rangle$ be increasing sequences of positive integers. Take $\delta \in (0,1)$ and let the sequence of positive real numbers $\langle \delta_t \rangle_{t \in \N}$ be such that $\sum_{t=1}^\infty \delta_t=\delta$.
Let $\langle \epsilon_t\rangle_{t \in \N}$ be another sequence of positive numbers such that $\lim_{t\to \infty} \epsilon_t = 0$. 
For each $m \in \N$ and $t \in \N$ recall the estimators 
\begin{align}\label{eq:alpha1hat}
&\wh{\alpha}_t({\bf X},m):=\wh{\alpha}_{\tau_t,n_t}^{\ell_t}({\bf X},m)
&~
&\wh{\beta}_t({\bf X},m):=\wh{\beta}_{\wt{\tau}_t,n_t}^{\ell_t}({\bf X},m)
\end{align}
respectively specified by \eqref{eq:alpha_hm} and \eqref{eq:theta_h_b}, 
with $\tau_t$ given by \eqref{eq:def:tau1} and $\wt{\tau}_t$ by \eqref{eq:params_b}.
For a given process $\bf{X}$ and every $t \in \N$ define 
\begin{equation}\label{eq:gamma_rate_a}
g_t(\bf{X}):=\begin{cases} 
+1& \text{if} \ \wh{\alpha}_t ({\bf X},m)\leq \gamma(m)+\epsilon_t,~m \in \{1,\ldots,M_t\}\\
-1& \text{otherwise}
\end{cases}
\end{equation}
and
\begin{equation}\label{eq:gamma_rate_b}
\wt{g}_t(\bf{X}):=\begin{cases} 
+1& \text{if} \ \wh{\beta}_t ({\bf X},m)\leq \gamma(m)+\epsilon_t,~m \in \{1,\ldots,M_t\}\\
-1&  \text{otherwise}
\end{cases}
\end{equation}
\begin{theorem}\label{thm:gamma_rate}
The sequence of functions ${g}_t,~t \in \N$ given by \eqref{eq:gamma_rate_a} give rise to a goodness-of-fit test on $\fC$ that is strongly consistent in the sense of \eqref{eq:contr_const}. 
\end{theorem}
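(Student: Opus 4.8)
The plan is to verify the two cases of \eqref{eq:contr_const} separately, using the strong consistency of $\wh{\alpha}_t({\bf X},m)$ from Theorem~\ref{thm:azal_as1} together with Proposition~\ref{prop:phik} (which tells us that the approximants $\alpha_n^\ell(m)$ underestimate $\alpha(m)$ and converge to it). Fix $\mu \in \fC$, so that $\norm{\boldsymbol\alpha}<\infty$ and Theorem~\ref{thm:azal_as1} applies for every $m\in\N$.

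\emph{Case $\mu \in \fC\cap\fR_\gamma$.} Here $\alpha(m)\le\gamma(m)$ for all $m$, and I want to show $g_t({\bf X})=+1$ eventually, almost surely. The subtlety is that $g_t$ tests all $m\in\{1,\dots,M_t\}$ simultaneously while $M_t\to\infty$, so a pointwise-in-$m$ argument is not quite enough; I would split the range at a fixed cutoff. For the ``small'' indices $m\le M^*$ (any fixed $M^*$), Theorem~\ref{thm:azal_as1} gives $\wh\alpha_t({\bf X},m)\to\alpha(m)\le\gamma(m)$ a.s., hence $\wh\alpha_t({\bf X},m)\le\gamma(m)+\epsilon_t$ for all large $t$ (using also $\epsilon_t>0$); taking a finite union over $m\le M^*$ preserves the a.s. statement. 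For the ``large'' indices $M^*<m\le M_t$ I would use the key structural fact that $\alpha_n^\ell(m)\le\alpha(m)$ for all $n,\ell$ (noted in the proof of Theorem~\ref{thm:azal_as2}), so $\alpha_{n_t}^{\ell_t}(m)\le\alpha(m)\le\gamma(m)$; combining this with the Borel--Cantelli argument behind Proposition~\ref{prop:phimb} — exactly as in the proof of Theorem~\ref{thm:azal_as2}, the event $\{\,\exists m\le M_t:\wh\alpha_t({\bf X},m)-\alpha_{n_t}^{\ell_t}(m)>\epsilon_t\,\}$ has summable probability by the choice $\tau_t=C_{m,\ell_t,n_t}/(m\epsilon_t^2\delta_t)$ with $\sum_t\delta_t<\infty$ — gives that eventually, a.s., $\wh\alpha_t({\bf X},m)\le\alpha_{n_t}^{\ell_t}(m)+\epsilon_t\le\gamma(m)+\epsilon_t$ for \emph{all} $m\le M_t$. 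Putting the two ranges together yields $g_t({\bf X})=+1$ for all large $t$, a.s.

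\emph{Case $\mu\in\fC\setminus\fR_\gamma$.} Now there exists a fixed $m_0\in\N$ with $\alpha(m_0)>\gamma(m_0)$; set $\rho:=\alpha(m_0)-\gamma(m_0)>0$. Since $M_t\to\infty$, we have $m_0\le M_t$ for all large $t$, so it suffices to show $\wh\alpha_t({\bf X},m_0)>\gamma(m_0)+\epsilon_t$ eventually, a.s., which then forces $g_t({\bf X})=-1$. By Theorem~\ref{thm:azal_as1}, $\wh\alpha_t({\bf X},m_0)\to\alpha(m_0)$ a.s.; since $\epsilon_t\to0$, for all large $t$ we get $\wh\alpha_t({\bf X},m_0)>\alpha(m_0)-\rho/2=\gamma(m_0)+\rho/2>\gamma(m_0)+\epsilon_t$, as needed. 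This case is the easy direction.

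\emph{Main obstacle.} The only real work is the ``large-$m$'' part of the first case: one cannot appeal to Theorem~\ref{thm:azal_as1} uniformly over the growing window $\{1,\dots,M_t\}$, so the inequality $\alpha_{n_t}^{\ell_t}(m)\le\alpha(m)$ together with the one-sided Borel--Cantelli bound from Proposition~\ref{prop:phimb} (reproducing the mechanism in the proof of Theorem~\ref{thm:azal_as2}) is what makes the argument go through. Everything else is a routine combination of almost-sure convergence and the positivity of $\epsilon_t$.
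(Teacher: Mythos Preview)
Your proposal is correct and follows essentially the same route as the paper. The paper handles the case $\mu\in\fC\cap\fR_\gamma$ in one stroke: it bounds $\Pr\big(\exists m\le M_t:\ \wh\alpha_t({\bf X},m)>\gamma(m)+\epsilon_t\big)$ by a union bound, uses $\gamma(m)\ge\alpha(m)\ge\alpha_{n_t}^{\ell_t}(m)$, applies Proposition~\ref{prop:phimb}, and concludes via Borel--Cantelli. This is exactly your ``large~$m$'' argument, and since you yourself write the controlled event as $\{\exists m\le M_t:\ldots\}$, your split at a fixed $M^*$ and the separate appeal to Theorem~\ref{thm:azal_as1} for small~$m$ are redundant --- the Proposition~\ref{prop:phimb}/Borel--Cantelli mechanism already covers all $m\le M_t$ uniformly. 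For the case $\mu\in\fC\setminus\fR_\gamma$ the paper unwinds Theorem~\ref{thm:azal_as1} into its ingredients (Propositions~\ref{prop:phik} and~\ref{prop:phimb}) and again finishes with Borel--Cantelli, whereas you invoke Theorem~\ref{thm:azal_as1} as a black box; these amount to the same thing.
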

\begin{proof}[Proof of Theorem~\ref{thm:gamma_rate}]
Consider a rate function $\gamma: \N \rightarrow [0,1]$ and denote by $\fR_{\gamma}$ the class of processes with $\gamma$ as rate function of their $\alpha$-mixing coefficients. Consider a process measure $\mu \in \fC$ with corresponding sequence of random variables $\bf X$.
First, observe that $g_t$ is $\F_t$-measurable.
To prove that $g_t$ is consistent in the sense of \eqref{eq:contr_const}, we proceed as follows. Consider a process ${\bf X}$ with process distribution $\mu$. 
Let $$E_t:=\{\exists m \in \{1,\ldots,M_t\} \st \wh{\alpha}_t({\bf X},m)> \gamma(m)+ \epsilon_t\}~.$$ 
If $\mu \in \fC \cap \fR_{\gamma}$, we have 
\begin{align}%\label{eq:ineqalphat}
\Pr(E_t)
& \leq \sum_{m=1}^{M_t}\Pr(\wh{\alpha}_t({\bf X},m) > \gamma(m)+\epsilon_t)\nonumber \\
& \leq \sum_{m=1}^{M_t}\Pr(\wh{\alpha}_t({\bf X},m) > \alpha(m)+\epsilon_t)\label{eq:muisinRgamma}\\
%& \leq \sum_{m=1}^{M_t}\Pr(\wh{\alpha}_t({\bf X},m) > \alpha_{n_t}^{\ell_t}(m)+\epsilon_t)\\
& \leq \sum_{m=1}^{M_t}\Pr(|\wh{\alpha}_t({\bf X},m) - \alpha_{n_t}^{\ell_t}(m)|>\epsilon_t)\label{eq:alpha_n_alpha} \\
%&\leq \frac{\norm{\boldsymbol{\alpha}}C_{M_t,\ell_t,n_t}}{\tau_t\epsilon_t^2}\\
&\leq \norm{\boldsymbol{\alpha}}\delta_t~, \label{eq:upper-bound1}
\end{align}
where \eqref{eq:muisinRgamma} follows from the fact that $\mu \in \fR_{\gamma}$, and \eqref{eq:alpha_n_alpha}  from observing that $\alpha_{n_t}^{\ell_t}(m) \leq \alpha(m),~m\in \N$, and \eqref{eq:upper-bound1} follows from Proposition~\ref{prop:phimb}.
Noting that $\norm{\boldsymbol{\alpha}}<\infty$ and $\delta_t$ is summable, we have $\sum_{t \in \N}\Pr(E_t) \leq \norm{\boldsymbol{\alpha}} \sum_{t\in \N} \delta_t = \norm{\boldsymbol{\alpha}} \delta <\infty$. Therefore, by the Borel-Cantelli Lemma we obtain $\Pr(\limsup_{t\rightarrow \infty} E_t)=0$. 
This means that there exists some $\tau$ (which depends on the sample-path) such that for all $t \geq \tau$ we have 
\begin{equation}
g_t({\bf X})=+1, ~\mu-\as~.
\end{equation} 
On the other hand, suppose $\mu \in \fC \setminus \fR_{\gamma}$ and observe that there exists some $m^* \in \N$ and some $\delta^* \in (0,1]$ such that $\alpha(m^*)-\gamma(m^*)=\delta^*$. 
Let $$\wt{E}_t:=\{\forall m \in \{1,\ldots,M_t\} \st \wh{\alpha}_t(m) \leq \gamma(m)+ \epsilon_t\}~.$$ 
Recalling that $\langle \ell_t \rangle$ and $\langle n_t \rangle$ are increasing sequences, by Proposition~\ref{prop:phik} there exists some $T_1$ such that for all $t \geq T_1$ we have 
\begin{equation}\label{eq:approx_to_mstar}
|\alpha_{n_t}^{\ell_t}(m^*)-\alpha(m^*)| \leq \delta^*/2~.
\end{equation}
Since $\langle M_t \rangle$ is an increasing sequence, we can find some $T_2 \in \N$ such that $m^*\leq M_t$ for all $t \geq T_2$. Moreover, since $\langle \epsilon_t \rangle$ decreases with $t$ there exists some $T_3$ such that $\epsilon_t \leq \delta^*/4$ for all $t \geq T_3$.
Let $T^*:=\max\{T_1,T_2,T_3\}$. For $t \geq T^*$ we have,
\begin{align}
\Pr(\wt{E}_t) &\leq \Pr(\wh{\alpha}_t({\bf X},m^*)\leq \gamma(m^*)+\epsilon_t)\nonumber \\
&\leq \Pr(\wh{\alpha}_t({\bf X},m^*)\leq \gamma(m^*)+\delta^*/2-\epsilon_t) \label{eq:alternative1}\\
& \leq \Pr(\wh{\alpha}_t({\bf X},m^*)-\alpha_{n_t}^{\ell_t}(m^*)\leq -\epsilon_t) \label{eq:alternative2}\\
&\leq \Pr(|\wh{\alpha}_t({\bf X},m^*)-\alpha_{n_t}^{\ell_t}(m^*)|\geq \epsilon_t) \nonumber\\
% &\leq \frac{\norm{\boldsymbol{\alpha}}(2^{2^{n_t\ell_t}+2^{m^*\ell_t}+2})}{\tau_t\epsilon_t^2}\label{eq:alternative3}\\
&\leq \norm{\boldsymbol{\alpha}}\delta_t\label{eq:alternative4}
\end{align}
where \eqref{eq:alternative1} follows from the fact that $\epsilon_t \leq \delta^*/4$ for $t\geq T_3$ so that $\delta^*/2-\epsilon_t \geq \epsilon_t$, \eqref{eq:alternative2}  follows from  \eqref{eq:approx_to_mstar} 
and \eqref{eq:alternative4} follows from Proposition~\ref{prop:phimb}  and the choice of $\tau_t$. 
In much the same way as above, we have $\sum_{t =T^*}^\infty\Pr(\wt{E}_t) \leq  \norm{\boldsymbol{\alpha}}\delta<\infty$ and by the Borel-Cantelli Lemma we obtain $\Pr(\limsup_{t\rightarrow\infty}\wt{E}_t) \leq \Pr(\cap_{t=T^*}^{\infty}\cup_{t'=t}^\infty \wt{E}_{t'})=0$. Thus, there is some 
(random) $T$ such that for all $t \geq T$ we have 
\begin{equation*}
g_t({\bf X})=-1, \quad \mu-\as.
\end{equation*} 
This proves the statement.
\end{proof}

It is straightforward to check that a similar argument based on Proposition~\ref{prop:phik_b} and Proposition~\ref{prop:phimb_b} leads to the following analogous result concerning $\beta$-mixing rate functions.

\begin{theorem}\label{thm:gamma_rate_b}
  The sequence of functions $\wt{g}_t,~t \in \N$ defined by \eqref{eq:gamma_rate_b} gives rise to a goodness-of-fit test on $\bfC$ that is strongly consistent in the sense of \eqref{eq:contr_const_b}.
\end{theorem}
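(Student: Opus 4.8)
The plan is to follow the proof of Theorem~\ref{thm:gamma_rate} essentially verbatim, replacing $\wh{\alpha}_t$, $\alpha(m)$, $\alpha_{n}^{\ell}(m)$, $\norm{\boldsymbol{\alpha}}$, $\fC$, $\fR_{\gamma}$, Proposition~\ref{prop:phik} and Proposition~\ref{prop:phimb} by their $\beta$-counterparts $\wh{\beta}_t$, $\beta(m)$, $\beta_{n}^{\ell}(m)$, $\norm{\boldsymbol{\beta}}$, $\bfC$, $\bfR_{\gamma}$, Proposition~\ref{prop:phik_b} and Proposition~\ref{prop:phimb_b}. First I would record that each $\wt{g}_t$ is $\F_t$-measurable, since by \eqref{eq:alpha1hat} the statistic $\wh{\beta}_t(\bX,m)=\wh{\beta}_{\btau_t,n_t}^{\ell_t}(\bX,m)$ depends on only finitely many coordinates of $\bX$. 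Then I would fix an arbitrary rate function $\gamma$ and a process $\mu\in\bfC$, so that $\norm{\boldsymbol{\beta}}<\infty$, and split into the two cases of \eqref{eq:contr_const_b}.

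For $\mu\in\bfC\cap\bfR_{\gamma}$ I would set $E_t:=\{\exists\, m\in\{1,\ldots,M_t\}:\wh{\beta}_t(\bX,m)>\gamma(m)+\epsilon_t\}$. Using $\beta_{n_t}^{\ell_t}(m)\le\beta(m)\le\gamma(m)$, the occurrence of $E_t$ forces $|\wh{\beta}_t(\bX,m)-\beta_{n_t}^{\ell_t}(m)|>\epsilon_t$ for some $m\le M_t$, so a union bound together with Proposition~\ref{prop:phimb_b} and the choice $\btau_t=\bC_{m,\ell_t,n_t}/(m\epsilon_t^2\delta_t)$ from \eqref{eq:params_b} gives $\Pr(E_t)\le\norm{\boldsymbol{\beta}}\delta_t$. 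Since $\norm{\boldsymbol{\beta}}<\infty$ and $\sum_t\delta_t=\delta<\infty$, the Borel--Cantelli lemma yields $\Pr(\limsup_t E_t)=0$, hence $\wt{g}_t(\bX)=+1$ for all large $t$, $\mu$-almost surely.

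For $\mu\in\bfC\setminus\bfR_{\gamma}$ there is some $m^{*}$ with $\delta^{*}:=\beta(m^{*})-\gamma(m^{*})>0$. I would put $\wt{E}_t:=\{\forall\, m\in\{1,\ldots,M_t\}:\wh{\beta}_t(\bX,m)\le\gamma(m)+\epsilon_t\}$ and use Proposition~\ref{prop:phik_b} to choose $T_1$ with $|\beta_{n_t}^{\ell_t}(m^{*})-\beta(m^{*})|\le\delta^{*}/2$ for $t\ge T_1$, then $T_2$ with $m^{*}\le M_t$ for $t\ge T_2$ (possible since $\langle M_t\rangle$ is increasing), and $T_3$ with $\epsilon_t\le\delta^{*}/4$ for $t\ge T_3$. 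For $t\ge\max\{T_1,T_2,T_3\}$ one has $\wt{E}_t\subseteq\{\wh{\beta}_t(\bX,m^{*})\le\gamma(m^{*})+\epsilon_t\}$, and since $\gamma(m^{*})+\epsilon_t\le\beta(m^{*})-\delta^{*}/2+\epsilon_t\le\beta_{n_t}^{\ell_t}(m^{*})-\epsilon_t$, this is contained in $\{|\wh{\beta}_t(\bX,m^{*})-\beta_{n_t}^{\ell_t}(m^{*})|\ge\epsilon_t\}$. Proposition~\ref{prop:phimb_b} then gives $\Pr(\wt{E}_t)\le\norm{\boldsymbol{\beta}}\delta_t$, so $\sum_{t}\Pr(\wt{E}_t)<\infty$, and Borel--Cantelli forces $\wt{g}_t(\bX)=-1$ for all large $t$, $\mu$-almost surely. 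Combining the two cases establishes \eqref{eq:contr_const_b}.

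I do not expect a genuine obstacle here: the argument is a transcription of Theorem~\ref{thm:gamma_rate}, and the $\beta$-machinery it invokes (Propositions~\ref{prop:phik_b} and \ref{prop:phimb_b}, built on Lemma~\ref{lem:dynk_b}) is already in place. The only points demanding a little attention are the comparison $\beta_{n_t}^{\ell_t}(m)\le\beta(m)$ --- which follows from $\beta(\F_1^j(\ell),\F_{j+m}^n(\ell))\le\beta(\sigma(X_1,\ldots,X_j),\sigma(\{X_t:t\ge j+m+1\}))\le\beta(m)$ by monotonicity of $\beta$-dependence under restriction of the $\sigma$-algebras together with the defining supremum over $j$ --- and checking that the combinatorial constants $\bC_{m,\ell,n}$ of Proposition~\ref{prop:phimb_b} are absorbed into $\btau_t$ exactly as the constants $C_{m,\ell,n}$ were absorbed into $\tau_t$ in the $\alpha$-case, so that $\Pr(|\wh{\beta}_t(\bX,m)-\beta_{n_t}^{\ell_t}(m)|\ge\epsilon_t)\le\norm{\boldsymbol{\beta}}\delta_t$ holds.
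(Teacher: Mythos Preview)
Your proposal is correct and matches the paper's approach exactly: the paper itself gives no detailed proof of Theorem~\ref{thm:gamma_rate_b}, stating only that ``a similar argument based on Proposition~\ref{prop:phik_b} and Proposition~\ref{prop:phimb_b} leads to the following analogous result,'' and your sketch is precisely that transcription of the proof of Theorem~\ref{thm:gamma_rate} with the $\alpha$-ingredients swapped for their $\beta$-counterparts. The two points you flag as requiring care---the monotonicity $\beta_{n_t}^{\ell_t}(m)\le\beta(m)$ and the absorption of $\bC_{m,\ell,n}$ into $\btau_t$---are handled correctly.
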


Next, we propose an asymptotically consistent test for an upper-bound on $\norm{\boldsymbol{\alpha}}$ (respectively $\norm{\boldsymbol{\beta}}$) of a stationary ergodic process that is not necessarily mixing. Denote by $\fE$ the set of all $\X$-valued stationary ergodic processes and observe that $\bfC \subset \fC \subset \fE$. Given a sample ${\bf X}$ generated by $\mu \in \fE$, we wish to test the null hypothesis $H_0$ that $\norm{\boldsymbol{\alpha}}\leq \gamma$ (respectively $\norm{\boldsymbol{\beta}}\leq \gamma$) for some fixed threshold $\gamma \in [0,\infty)$ against the alternative hypothesis that $\norm{\boldsymbol{\alpha}}> \gamma$ ($\norm{\boldsymbol{\beta}}\leq \gamma$).  Let $\langle \zeta_t\rangle_{t \in \N}$ be a decreasing sequence of positive real numbers such that $\lim_{t \rightarrow \infty}\zeta_t=0$.  For a sample ${\bf X}$, define the sequences $\langle f_t\rangle_{t \in \N}$ and $\langle \wt{f}_t\rangle_{t \in \N}$ of $\F_t$-measurable functions
\begin{equation}\label{eq:general_test_se_gamma}
  f_t(\bf{X}):=\begin{cases}
    +1& \text{if} \ \Z_t ({\bf X})\leq \gamma+\zeta_t\\
    -1& \text{otherwise} \end{cases} \end{equation} and \begin{equation}\label{eq:general_test_se_gamma_b} \wt{f}_t(\bf{X}):=\begin{cases}
    +1& \text{if} \ \bZ_t ({\bf X})\leq \gamma+\zeta_t\\
    -1&\text{otherwise}~, \end{cases}
\end{equation}
where $\Z_t ({\bf X})$ is given by \eqref{eq:wtz} and $\bZ_t ({\bf X})$ is given by \eqref{eq:wtz_b}. For each $\gamma \in [0,\infty)$ denote by $\fC_{\gamma}$ the subclass of $\fC$ corresponding to stationary $\alpha$-mixing processes whose sequences of $\alpha$-mixing coefficients sum to at most $\gamma$. 
A simple argument which relies on Theorem~\ref{thm:exhaustive_strong}  yields the following consistency result.
\begin{theorem}\label{thm:theta_rate} For each $\gamma \in [0,\infty)$, and given a sample $\bf{X}$ generated by a (not necessarily mixing) stationary ergodic process $\mu \in \fE$, with probability $1$ the test $\langle f_t\rangle_{t \in \N}$ given by \eqref{eq:general_test_se_gamma} has the property that
  \begin{align*}
    &\lim_{t\to \infty}f_t(\bf X)= \begin{cases} +1 & \text{if} \ \mu \in \fC_{\gamma} \\
      -1 & \text{if} \ \mu \in \fE \setminus \fC_{\gamma}~. \end{cases}
  \end{align*}
\end{theorem}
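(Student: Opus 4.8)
The plan is to deduce the consistency of the test $\langle f_t\rangle_{t\in\N}$ directly from the strong consistency of the estimator $\Z_t$ furnished by Theorem~\ref{thm:exhaustive_strong}, using only that $\langle\zeta_t\rangle_{t\in\N}$ is positive and decreases to zero. Measurability of $f_t$ is immediate: $\Z_t({\bf X})$ is determined by the bits $b_1(\cdot),\dots,b_t(\cdot)$, each bit depends on only finitely many of the quantities $\theta_1({\bf X}),\dots,\theta_t({\bf X})$, and each $\theta_{t'}({\bf X})$ is a function of finitely many coordinate projections of ${\bf X}$. It then remains to analyse the two cases of the dichotomy, recalling that, since a process that fails to be $\alpha$-mixing has non-increasing mixing coefficients with a strictly positive limit and hence $\norm{\boldsymbol{\alpha}}=\infty$, the class $\fE\setminus\fC_{\gamma}$ is precisely the class of stationary ergodic processes with $\norm{\boldsymbol{\alpha}}>\gamma$ (finite or infinite).

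First suppose $\mu\in\fC_{\gamma}$, i.e.\ $\norm{\boldsymbol{\alpha}}\le\gamma<\infty$. If $\norm{\boldsymbol{\alpha}}<\gamma$, then since $\Z_t({\bf X})\to\norm{\boldsymbol{\alpha}}$ $\mu$-a.s.\ by Theorem~\ref{thm:exhaustive_strong} and $\zeta_t>0$, with probability one there is a random $T$ such that $\Z_t({\bf X})<\gamma\le\gamma+\zeta_t$ for all $t\ge T$, whence $f_t({\bf X})=+1$ for all $t\ge T$. The only delicate point is the boundary case $\norm{\boldsymbol{\alpha}}=\gamma$, since then $\Z_t({\bf X})\to\gamma$ does not by itself imply $\Z_t({\bf X})\le\gamma+\zeta_t$ eventually. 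To handle it I would invoke the $\limsup$ half of the proof of Theorem~\ref{thm:exhaustive_strong}, which shows that, with probability one, there is a random time after which $b_t(s)=1$ for \emph{every} $s\in\S$ with $s>\norm{\boldsymbol{\alpha}}$; since $\S$ is dense in $[0,\infty)$ this forces $\Z_t({\bf X})\le\inf\bigl(\S\cap(\norm{\boldsymbol{\alpha}},\infty)\bigr)=\norm{\boldsymbol{\alpha}}=\gamma<\gamma+\zeta_t$ for all large $t$, and again $f_t({\bf X})=+1$ eventually, $\mu$-a.s.

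Next suppose $\mu\in\fE\setminus\fC_{\gamma}$, so that $\norm{\boldsymbol{\alpha}}>\gamma$. If $\norm{\boldsymbol{\alpha}}<\infty$, fix any $\eta\in(\gamma,\norm{\boldsymbol{\alpha}})$; Theorem~\ref{thm:exhaustive_strong} gives $\Z_t({\bf X})\to\norm{\boldsymbol{\alpha}}$ $\mu$-a.s., so eventually $\Z_t({\bf X})>\eta$, while $\zeta_t\to0$ gives $\gamma+\zeta_t<\eta$ for all large $t$; combining these, $\Z_t({\bf X})>\gamma+\zeta_t$ eventually and $f_t({\bf X})=-1$ eventually, $\mu$-a.s. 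If $\norm{\boldsymbol{\alpha}}=\infty$, then $\Z_t({\bf X})\to\infty$ $\mu$-a.s., so eventually $\Z_t({\bf X})>\gamma+1$, and since $\zeta_t<1$ for $t$ large we again obtain $\Z_t({\bf X})>\gamma+\zeta_t$ and $f_t({\bf X})=-1$ eventually, $\mu$-a.s. Together with the previous paragraph this yields the stated limit for $f_t({\bf X})$.

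The one place that requires more than a mechanical squeeze between $\Z_t({\bf X})\to\norm{\boldsymbol{\alpha}}$ and $\gamma+\zeta_t\to\gamma$ is the boundary case $\norm{\boldsymbol{\alpha}}=\gamma$ under $H_0$; in the two remaining cases $\norm{\boldsymbol{\alpha}}$ is bounded away from $\gamma$, leaving a fixed amount of slack to absorb $\zeta_t$. I expect this boundary case to be the main (and only) obstacle, and it is resolved by exploiting the internal structure of $\Z_t$ established within the proof of Theorem~\ref{thm:exhaustive_strong} rather than its statement alone.
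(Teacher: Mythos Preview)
Your argument is correct and follows exactly the route the paper indicates: the paper gives no detailed proof of this theorem, merely stating that ``a simple argument which relies on Theorem~\ref{thm:exhaustive_strong}'' suffices, and your derivation is that argument spelled out. Your identification and resolution of the boundary case $\norm{\boldsymbol{\alpha}}=\gamma$---by extracting from the proof of Theorem~\ref{thm:exhaustive_strong} the stronger conclusion that eventually $\Z_t({\bf X})\le\norm{\boldsymbol{\alpha}}$ (not merely $\limsup\Z_t\le\norm{\boldsymbol{\alpha}}$)---is a genuine and necessary refinement that the paper's one-line remark glosses over; the statement of Theorem~\ref{thm:exhaustive_strong} alone would not close this case.
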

Similarly, an argument based on Theorem~\ref{thm:whp_b},  leads to the following analogous result.  
\begin{theorem}\label{thm:theta_rate_b} For each $\gamma \in [0,\infty)$, and given a sample $\bf{X}$ generated by a (not necessarily mixing) stationary ergodic process $\mu \in \fE$, with probability $1$ the test $\langle \wt{f}_t\rangle$ given by \eqref{eq:general_test_se_gamma_b} has the property that
  \begin{align*} &\lim_{t\to \infty}\wt{f}_t(\bf X)= \begin{cases} +1 & \text{if} \ \mu \in \bfC_{\gamma} \\
      -1 & \text{if} \ \mu \in \fE \setminus \bfC_{\gamma}~. \end{cases}
  \end{align*}
\end{theorem}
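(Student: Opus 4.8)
The plan is to mirror the (sketched) argument behind Theorem~\ref{thm:theta_rate}, substituting $\bZ_t$ for $\Z_t$, the sequence $\boldsymbol{\beta}$ for $\boldsymbol{\alpha}$, the class $\bfC_{\gamma}$ for $\fC_{\gamma}$, and Theorem~\ref{thm:whp_b} for Theorem~\ref{thm:exhaustive_strong}. Having fixed $\gamma \in [0,\infty)$ and $\mu \in \fE$ with associated sequence of $\beta$-mixing coefficients $\boldsymbol{\beta}$, I would first note that $\wt{f}_t$ is $\F_t$-measurable by construction, so only the limiting behaviour needs to be checked. Since $\bfC_{\gamma}$ consists exactly of the stationary ergodic processes with $\norm{\boldsymbol{\beta}} \le \gamma$ (a process with $\norm{\boldsymbol{\beta}} \le \gamma < \infty$ is automatically $\beta$-mixing, and a process that is not $\beta$-mixing has $\norm{\boldsymbol{\beta}} = \infty$ because $\boldsymbol{\beta}$ is non-increasing), it suffices to separate the two cases $\norm{\boldsymbol{\beta}} \le \gamma$ and $\norm{\boldsymbol{\beta}} > \gamma$.

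In the case $\norm{\boldsymbol{\beta}} \le \gamma < \infty$ (that is, $\mu \in \bfC_{\gamma}$), I would invoke the analogue for $\bZ_t$ of the statement \eqref{eq:limsup} established in the proof of Theorem~\ref{thm:exhaustive_strong} --- which is part of the content of Theorem~\ref{thm:whp_b} --- and, unwinding the test-based construction, conclude that with probability one there is a random time $T$ such that, for every $t \ge T$, $\wt{b}_t(s) = 1$ for all $s \in \S$ with $s > \norm{\boldsymbol{\beta}}$. By density of $\S$ in $[0,\infty)$ this forces $\bZ_t({\bf X}) \le \norm{\boldsymbol{\beta}} \le \gamma \le \gamma + \zeta_t$ for all $t \ge T$, hence $\wt{f}_t({\bf X}) = +1$ eventually, $\mu$-\as. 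In the case $\norm{\boldsymbol{\beta}} > \gamma$ (that is, $\mu \in \fE \setminus \bfC_{\gamma}$, which also covers every process that fails to be $\beta$-mixing), I would use $\bZ_t({\bf X}) \to \norm{\boldsymbol{\beta}}$ $\mu$-\as from Theorem~\ref{thm:whp_b}, with the convention that this limit may be $+\infty$; more precisely, via the analogue of \eqref{eq:liminf}, for any $\rho > 0$ with $\gamma + \rho < \norm{\boldsymbol{\beta}}$ there is, with probability one, a random time after which $\bZ_t({\bf X}) \ge \gamma + \rho$. Fixing such a $\rho$ and a further time beyond which $\zeta_t < \rho$ (possible since $\zeta_t \downarrow 0$), I get $\bZ_t({\bf X}) \ge \gamma + \rho > \gamma + \zeta_t$ eventually, so $\wt{f}_t({\bf X}) = -1$ eventually, $\mu$-\as. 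Combining the two cases yields the claimed dichotomy.

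The one step I expect to require genuine care is the boundary subcase $\norm{\boldsymbol{\beta}} = \gamma$ inside the first case: there, mere convergence $\bZ_t({\bf X}) \to \gamma$ is \emph{not} sufficient to conclude $\bZ_t({\bf X}) \le \gamma + \zeta_t$, since $\zeta_t \to 0$; what makes it work is that the test-based construction actually delivers the one-sided statement ``eventually $\bZ_t({\bf X}) \le \norm{\boldsymbol{\beta}}$'', not merely $\limsup_t \bZ_t({\bf X}) \le \norm{\boldsymbol{\beta}}$, exactly as in Theorem~\ref{thm:exhaustive_strong}. Accordingly, I would make sure that the $\beta$-mixing analogues already proved --- Lemma~\ref{lem:dynk_b}, Proposition~\ref{prop:phik_b}, Proposition~\ref{prop:phimb_b} and Lemma~\ref{lem:ezerg_b} --- are indeed strong enough to push the entire proof of Theorem~\ref{thm:exhaustive_strong} through for $\bZ_t$, which is precisely what Theorem~\ref{thm:whp_b} asserts; granting that, the remainder is routine bookkeeping with the Borel--Cantelli lemma and the density of $\S$.
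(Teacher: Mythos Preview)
Your proposal is correct and follows precisely the route the paper indicates: the paper gives no explicit proof here, merely stating that an argument based on Theorem~\ref{thm:whp_b} yields the result, and your two-case analysis via the almost-sure convergence of $\bZ_t$ is exactly that argument. Your observation about the boundary case $\norm{\boldsymbol{\beta}}=\gamma$ --- that one needs the eventual one-sided inequality $\bZ_t({\bf X})\le \norm{\boldsymbol{\beta}}$ extracted from the proof of Theorem~\ref{thm:exhaustive_strong}, rather than mere convergence --- is a genuine and correct refinement that the paper leaves implicit.
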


Using Theorem~\ref{thm:theta_rate}, we can readily obtain a consistent test for independence on $\fE$. 
Consider a process $\bf X$ with distribution $\mu$. Recall that $\I$ denotes the set of all $\X$-valued i.i.d. processes, and observe that if $\mu \in \I$ then clearly $\norm{\boldsymbol{\alpha}}=0$. Thus, if $\mu \in \fE$ we can use \eqref{eq:general_test_se_gamma} with $\gamma=0$ 
to  test the null hypothesis $H_0$ that $\mu \in \I$ against the alternative hypothesis $H_1$ that $\mu \in \fE \setminus \I$ so that the process is stationary ergodic but it is not i.i.d..  
More specifically, let $\langle \zeta_t \rangle$ be a decreasing sequence of positive real numbers as before, such that $\lim_{t\rightarrow\infty} \zeta_t=0$. For a given process ${\bf X}$ and each $t \in \N$ define 
\begin{equation}\label{eq:ftt}
\ol{f}_t(\bf{X}):=\begin{cases} 
+1& \text{if} \ \wh{\alpha}_t({\bf X}) \leq \zeta_t\\
-1&\text{otherwise}
\end{cases}
\end{equation}
Observing that $\norm{\boldsymbol{\alpha}}=0$ for all i.i.d. processes $\mu \in \I$, the consistency of \eqref{eq:ftt} trivially follows from Theorem~\ref{thm:theta_rate}. Thus, we recover, albeit via a different approach, the main result of Morvai and Weiss \cite{MOR11} as the following corollary. 
\begin{corollary}[Morvai and Weiss \cite{MOR11}] \label{cor:iidt2} 
There exists a strongly consistent test for independence on $\fE$. 
That is, for a stationary ergodic process $\bf{X}$ with process distribution $\mu \in \fE$ with probability one it holds that
\begin{align*}
&\lim_{t\to \infty}\ol{f}_t(\bf X)=
                 \begin{cases} +1 & \text{if} \ \mu \in \fE \cap \I\\
                   -1 & \text{if} \ \mu \in \fE \setminus \I ~. \end{cases} 
\end{align*}
\end{corollary}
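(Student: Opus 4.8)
The plan is to derive Corollary~\ref{cor:iidt2} as the special case $\gamma = 0$ of Theorem~\ref{thm:theta_rate}, once we have identified the class $\fC_0$ with $\I$. The test $\ol{f}_t$ of \eqref{eq:ftt} is precisely the test $f_t$ of \eqref{eq:general_test_se_gamma} with $\gamma = 0$ (so the acceptance rule is $\Z_t({\bf X}) \le \zeta_t$), and it is $\F_t$-measurable because $\Z_t$ is. Hence the only substantive point to check is that the null region $\fC_0$ and the alternative region $\fE \setminus \fC_0$ appearing in Theorem~\ref{thm:theta_rate} at $\gamma = 0$ coincide with $\fE \cap \I$ and $\fE \setminus \I$ respectively.

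First I would record the easy inclusion $\I \subseteq \fC_0$: if $\mu \in \I$ then for every $j \in \N$ and every $m \ge 1$ the $\sigma$-algebras $\sigma(\{X_t : 1 \le t \le j\})$ and $\sigma(\{X_t : t \ge j+m\})$ are independent, so $\alpha(\sigma(\{X_t : 1 \le t \le j\}), \sigma(\{X_t : t \ge j+m\})) = 0$; taking the supremum over $j$ gives $\alpha(m) = 0$ for every $m$, whence $\norm{\boldsymbol{\alpha}} = 0$. Moreover an i.i.d. process is stationary and (being $\alpha$-mixing) ergodic, so $\mu \in \fE$, and trivially $\mu \in \fC$ with $\norm{\boldsymbol{\alpha}} \le 0$; thus $\mu \in \fC_0$.

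Next I would prove the converse inclusion $\fC_0 \subseteq \I$. Suppose $\mu \in \fC_0$, i.e. $\mu$ is stationary with $\norm{\boldsymbol{\alpha}} = 0$; then in particular $\alpha(1) = 0$, which forces $\sigma(\{X_t : 1 \le t \le j\})$ and $\sigma(\{X_t : t \ge j+1\})$ to be independent for every $j \in \N$ (recall that $\alpha(\U,\V) = 0$ if and only if $\U$ and $\V$ are independent). An induction on $k$ then shows that $X_1, \dots, X_k$ are mutually independent for every $k$: assuming it for $k-1$, independence of $\sigma(X_1,\dots,X_{k-1})$ and $\sigma(\{X_t : t \ge k\})$ gives $\mu(X_1 \in A_1, \dots, X_k \in A_k) = \mu(X_1 \in A_1, \dots, X_{k-1} \in A_{k-1})\,\mu(X_k \in A_k) = \prod_{i=1}^k \mu(X_i \in A_i)$. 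Together with stationarity (which makes the coordinates identically distributed), this shows $\mu \in \I$. Combining the two inclusions yields $\fC_0 = \I$, and since $\I \subseteq \fE$ this also gives $\fE \cap \I = \I = \fC_0$ and $\fE \setminus \I = \fE \setminus \fC_0$.

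With this identification, the corollary is immediate: applying Theorem~\ref{thm:theta_rate} with $\gamma = 0$ to a stationary ergodic $\mu \in \fE$ shows that, with $\mu$-probability one, $\lim_{t\to\infty} \ol{f}_t({\bf X}) = +1$ when $\mu \in \fC_0 = \fE \cap \I$ and $\lim_{t\to\infty} \ol{f}_t({\bf X}) = -1$ when $\mu \in \fE \setminus \fC_0 = \fE \setminus \I$, which is exactly the stated conclusion and re-proves the result of Morvai and Weiss \cite{MOR11}. I do not expect a genuine obstacle here; the only step needing (minor) care is the elementary measure-theoretic fact that within the stationary class $\norm{\boldsymbol{\alpha}} = 0$ characterizes i.i.d. sequences — equivalently, that the vanishing of the single coefficient $\alpha(1)$ already propagates, via past-versus-future independence and the induction above, to full mutual independence of all coordinates.
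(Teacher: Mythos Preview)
Your proposal is correct and follows exactly the route the paper indicates: specialize Theorem~\ref{thm:theta_rate} to $\gamma=0$ and identify $\fC_0$ with $\I$. The paper treats this identification as obvious, recording only the easy implication $\mu\in\I\Rightarrow\norm{\boldsymbol{\alpha}}=0$; you go further and supply the converse (that a stationary process with $\alpha(1)=0$ is i.i.d.) via the past-versus-future independence and a short induction, which is a welcome clarification the paper leaves implicit.
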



\begin{thebibliography}{10}

\bibitem{BER06}
P.~Bertail, P.~Doukhan, and P.~Soulier.
\newblock {\em Dependence in probability and statistics}.
\newblock Springer, 2006.

\bibitem{BOS12}
D.~Bosq.
\newblock {\em Nonparametric statistics for stochastic processes: estimation
  and prediction}, volume 110.
\newblock Springer Science \& Business Media, 2012.

\bibitem{BRA07}
R.~C. Bradley.
\newblock {\em Introduction to Strong Mixing Conditions}, volume 1, 2, 3.
\newblock Kendrick Press, 2007.

\bibitem{DOUK94}
P.~Doukhan.
\newblock {\em Mixing: Properties and Examples}.
\newblock Springer Lecture Notes, 1994.

\bibitem{FRY11}
P.~Fryzlewicz and S.~Rao.
\newblock Mixing properties of arch and time-varying arch processes.
\newblock {\em Bernoulli}, 17(1):320--346, 2011.

\bibitem{MCD11}
D.~McDonald, C.~Shalizi, and M.~Schervish.
\newblock Estimating beta-mixing coefficients.
\newblock In {\em Proceedings of the Fourteenth International Conference on
  Artificial Intelligence \& Statistics}, pages 516--524, 2011.

\bibitem{MCD15}
D.~McDonald, C.~Shalizi, and M.~Schervish.
\newblock Estimating beta-mixing coefficients via histograms.
\newblock {\em Electronic Journal of Statistics}, 9(2):2855--2883, 2015.

\bibitem{MEY12}
S.~P. Meyn and R.~L. Tweedie.
\newblock {\em Markov chains and stochastic stability}.
\newblock Springer Science \& Business Media, 2012.

\bibitem{MOR11}
G.~Morvai and B.~Weiss.
\newblock Testing stationary processes for independence.
\newblock In {\em Annales de l'Institut Henri Poincar\'e Probabilit{\'e}s et
  statistiques}, volume~47, pages 1219--1225, 2011.

\bibitem{NOB06}
A.~B. Nobel.
\newblock Hypothesis testing for families of ergodic processes.
\newblock {\em Bernoulli}, 12(2):251--269, 2006.

\bibitem{RIO99}
E.~Rio.
\newblock {\em Th{\'e}orie asymptotique des processus al{\'e}atoires faiblement
  d{\'e}pendants}.
\newblock Springer, 1999.

\bibitem{ROS56}
M.~Rosenblatt.
\newblock A central limit theorem and a strong mixing condition.
\newblock {\em Proceedings of the National Academy of Sciences of the United
  States of America}, 42(1):43, 1956.

\bibitem{SAM00}
P.~M. Samson.
\newblock Concentration of measure inequalities for markov chains and
  $\phi$-mixing processes.
\newblock {\em The Annals of Probability}, 2000.

\bibitem{VoRo59}
V.~A. Volkonskii and Y.~A. Rozanov.
\newblock Some limit theorems for random functions. i.
\newblock {\em Theory of Probability \& Its Applications}, 4(2):178--197, 1959.

\bibitem{VoRo61}
V.~A. Volkonskii and Y.~A. Rozanov.
\newblock Some limit theorems for random functions. ii.
\newblock {\em Theory of Probability \& Its Applications}, 6(2):186--198, 1961.

\end{thebibliography}
\end{document}